\newtheorem{remark}{\bf Remark}
\newif\if@borderstar
\def\bordermatrix{\@ifnextchar*{%
\@borderstartrue\@bordermatrix@i}{\@borderstarfalse\@bordermatrix@i*}%
}
\def\@bordermatrix@i*{\@ifnextchar[{\@bordermatrix@ii}{\@bordermatrix@ii[()]}}
\def\@bordermatrix@ii[#1]#2{%
\begingroup
\m@th\@tempdima8.75\p@\setbox\z@\vbox{%
\def\cr{\crcr\noalign{\kern 2\p@\global\let\cr\endline }}%
\ialign {$##$\hfil\kern 2\p@\kern\@tempdima & \thinspace %
\hfil $##$\hfil && \quad\hfil $##$\hfil\crcr\omit\strut %
\hfil\crcr\noalign{\kern -\baselineskip}#2\crcr\omit %
\strut\cr}}%
\setbox\tw@\vbox{\unvcopy\z@\global\setbox\@ne\lastbox}%
\setbox\tw@\hbox{\unhbox\@ne\unskip\global\setbox\@ne\lastbox}%
\setbox\tw@\hbox{%
$\kern\wd\@ne\kern -\@tempdima\left\@firstoftwo#1%
\if@borderstar\kern2pt\else\kern -\wd\@ne\fi%
\global\setbox\@ne\vbox{\box\@ne\if@borderstar\else\kern 2\p@\fi}%
\vcenter{\if@borderstar\else\kern -\ht\@ne\fi%
\unvbox\z@\kern-\if@borderstar2\fi\baselineskip}%
\if@borderstar\kern-2\@tempdima\kern2\p@\else\,\fi\right\@secondoftwo#1 $%
}\null \;\vbox{\kern\ht\@ne\box\tw@}%
\endgroup
}
\newenvironment{breakablealgorithm}
{
	\begin{center}
		\refstepcounter{algorithm}
		\hrule height.8pt depth0pt \kern2pt
		\renewcommand{\caption}[2][\relax]{
			{\raggedright\textbf{\ALG@name~\thealgorithm} ##2\par}%
			\ifx\relax##1\relax 
			\addcontentsline{loa}{algorithm}{\protect\numberline{\thealgorithm}##2}%
			\else 
			\addcontentsline{loa}{algorithm}{\protect\numberline{\thealgorithm}##1}%
			\fi
			\kern2pt\hrule\kern2pt
		}
	}{
		\kern2pt\hrule\relax
	\end{center}
}
\begin{document}

\title{\bf Tensor Regularized Total Least Squares Method with Applications to Image and Video Deblurring}
\author{Feiyang Han \thanks{E-mail: 19110180030@fudan.edu.cn. School of Mathematical Sciences, Fudan University, Shanghai, 200433, P. R. of China. This author is supported by the National Natural Science Foundation of China under grant 12271108. } \and  Yimin Wei \thanks{Corresponding author (Y. Wei). E-mail: ymwei@fudan.edu.cn and yimin.wei@gmail.com. School of Mathematical Sciences and Key Laboratory of Mathematics for Nonlinear Sciences, Fudan University, Shanghai, 200433, P. R. China. This author is supported by Innovation Program of Shanghai Municipal Education Commission and the National Natural Science Foundation of China under grant 12271108.
}
\and
Pengpeng Xie \thanks{E-Mail: xie@ouc.edu.cn. School of Mathematical Sciences, Ocean University of China, Qingdao 266100, P.R. of 	China. This author is supported by the National Natural Science Foundation of China under grant 12271108.}
}

\maketitle

\begin{abstract}
Total least squares (TLS) is an effective method for solving linear equations with the situations, when noise is not just in observation matrices but also in mapping matrices. Moreover, the Tikhonov regularization is widely used in plenty of ill-posed problems. In this paper, we extend the regularized total least squares (RTLS) method from the matrix form due to Golub, Hansen and O'Leary,  to the tensor form proposing the tensor regularized total least squares (TR-TLS) method for solving ill-conditioned tensor systems of equations. Properties and algorithms about the solution of the TR-TLS problem, which might be similar to  those of the RTLS, are also presented and proved. Based on this method, some applications in image and video deblurring are explored. Numerical examples illustrate the TR-TLS, compared with the existing methods.
\end{abstract}

\begin{keywords}Tikhonov Regularization, Total Least Squares, Tensor T-product, Image Processing, Video Deblurring
\end{keywords}

\begin{AMS}
 15A18, 15A69, 65F15, 65F10.
\end{AMS}

\section{Introduction}
Regularized total least squares method (RTLS) is a practical technique for solving ill-conditioned overdetermined linear problems and discrete linear ill-posed problems. This method consists of two important parts, the total least squares (TLS) and the Tikhonov regularization. Among them, the TLS method was proposed by Golub and Van Loan in 1980 \cite{golub1980analysis}. Different from the more traditional least squares (LS) problem, the TLS considers that in the linear equations $Ax\approx b$, not only the right-hand  vector $b$ is affected by the error vector $f$, but also the coefficient matrix $A$ is affected by the error matrix $E$. The LS is aimed to solve the computational problems, 
\begin{equation}
\min _{x, \varepsilon}\|\varepsilon\|_2 \quad \text { s.t. } \quad A x=b+\varepsilon,
\end{equation}
where $A \in \mathbb{R}^{m \times n}, x\in\mathbb R^{n}, b$ and $ \varepsilon \in \mathbb{R}^m$, while the TLS is designed to solve the following issues,
\begin{equation}
\min _{x, E, f}\|(E, f)\|_F \quad \text { s.t. } \quad(A+E) x=b+f,
\end{equation}
where $A, E \in \mathbb{R}^{m \times n}, x\in\mathbb R^{n}, b$ and $ f \in \mathbb{R}^m$. 

In 1987 \cite{van1987algebraic}, Van Huffel analyzed the relationships between TLS and LS solutions of  linear equations, $Ax \approx b$. There are two essential monographs \cite{ markovsky2007overview, van1991total} and plenty of papers \cite{beck2005a, fierro1997regularization, gratton2013sensitivity, zbMATH06609241, zbMATH05998011, liu2022multi, mastronardi2000fast, zbMATH01734139, zheng2017condition} to summarize the properties and varieties of TLS method. Since the TLS was proposed, it has gained wide attention and applications in  signal processing \cite{de2008blind}, data mining \cite{han2022tls, li2015solving} and  image processing \cite{benthib2022a, el2021tensor, markovsky2007overview, vasilescu2002multilinear}.

When the coefficient matrix $A$ tends to be ill-conditioned, the solution is very sensitive to perturbation. As a consequence, a regularization constraint is required for the original problem. The Tikhonov regularization is a common method to deal with ill-conditioned problems \cite{chu2011condition, schaffrin2010total, wei2016tikhonov, xiang2015randomized}. In 1999 \cite{golub1999tikhonov}, Golub, Hansen and O'Leary provided Tikhonov regularization methods to keep the solution stable for the highly ill-conditioned linear TLS problem and proposed RTLS method. The RTLS problem holds the form as follows,
\begin{equation}
\min_{\tilde{A}, \tilde{b},x} \|(A, b)-(\tilde{A}, \tilde{b})\|_F \quad \text { s.t. } \quad \tilde{b}=\tilde{A} x, \quad\|L x\|_2 \leq \delta,
\end{equation}
with its corresponding Lagrange multiplier formulation,
\begin{equation}
\hat{\mathcal{L}}(\tilde{A}, x, \mu)=\|(A, b)-(\tilde{A}, \tilde{A} x)\|_F^2+\mu\left(\|L x\|_2^2-\delta^2\right),
\end{equation}
where $\mu$ is the Lagrange multiplier. Since the RTLS  was raised, there have been plenty of results for solving the RTLS problem in the matrix form. In 2001 and 2005 respectively, Guo and Renaut \cite{guo2002regularized, renaut2004efficient} generated two different algorithms. Sima, Van Huffel and Golub \cite{sima2004regularized} presented a computational approach for solving the RTLS in 2003. Beck and Ben-Tal \cite{beck2006solution} discussed more properties in 2006. Besides, Zare and Hajarian considered the RTLS as an optimization problem and generated a Gauss-Newton algorithm in 2022 \cite{zare2022efficient}.

In data science, the term ``tensor" often refers to multidimensional arrays \cite{zbMATH06634084}. The first order tensor refers to vector (one-dimensional array), the second order tensor refers to matrix (two-dimensional array), and the third or higher order tensor refers to high-dimensional array. Tensors have an innate advantage in the regression analysis \cite{guhaniyogi2017bayesian, li2013some, li2015solving,  lock2018tensor, reichel2022weight, zhou2013tensor}. There are plenty of essential applications in tensor images and video modeling \cite{beik2021tensor, beik2020golub, benthib2022a}. On the one hand, using tensor structures to store data can preserve the spatial structure properties of higher-order data as much as possible \cite{zhou2017tensor}. On the other hand, using tensor operators to fit the behavior of the system can enhance the representation ability of the model. Miao \emph{et al.} \cite{miao2022Stochastic} introduced the tensor TLS and discussed the stochastic perturbation bounds for the tensor Moore-Penrose inverse based on the tensor-tensor product (T-product). Tensor T-product is also widely used in the fields of tensor linear systems, the tensor recovery and traffic models \cite{chen2021regularized, zbMATH07441215, ma2022randomized}. Besides, the tensor {Krylov} subspace  and {Golub}-{Kahan}-{Tikhonov} methods are used to speed up the computation in real applications \cite{guide2022tensorK, guide2022rbe, reichel2021tensor, reichel2022tensor}.
Recent results on the tensor decompositions via the tensor-tensor product can be found in \cite{Che2022fast,Che2022efficient,Chen2022tensor,Wang2020tensor}.

Based on the tensor T-product \cite{kilmer2013third, kilmer2011factorization, martin2013order}, we propose a tensor regularized total least squares (TR-TLS)  and provide the corresponding numerical algorithm. The TR-TLS has the similar form with the matrix RTLS,
\begin{equation}
\label{TRTLS_0001}
\begin{aligned}
\min_{\tilde{\mathcal A}, \tilde{\mathcal B}, \mathcal X}\left\|\left(\mathcal A,\mathcal B\right) - \left(\tilde{\mathcal A}, \tilde{\mathcal A}*_{\rm T}\mathcal X\right)\right\|_F\ \ \ \text{s.t.} \ \  \ \tilde{\mathcal B}=\tilde{\mathcal A}*_{\rm T}\mathcal X,\ \ \left\|\mathcal  K*_{\rm T} \mathcal X\right\|_F\leq\delta,
\end{aligned}
\end{equation}
where ``$*_{\rm T}$'' is the tensor T-product, which will be introduced in Section 2. In (\ref{TRTLS_0001}), $\tilde{\mathcal A}$ and $ \tilde{\mathcal B}$ are true data, while ${\mathcal A}$ and $ {\mathcal B}$ are observed data with errors. Some relative works were studied by El Guide \emph{et al.} \cite{el2021tensor} to generate the tensor regularized LS problem, using the generalized tensor Golub-Kahan and the GMRES. The tensor regularized LS problem could be transferred into ridge regression methods \cite{gazagnadou2022ridgesketch}.

This paper is organized as follows. In Section 2, some fundamental definitions, properties and notations of T-product operator are listed for convenience. At the same time, some main results are reviewed. The TR-TLS problem and some important theorems are derived in Section 3. Besides, an iterative algorithm is proposed to solve the TR-TLS problem, not only for single lateral slices but also for multi lateral slices. The numerical experiments which explore the applications in ill-posed images and video deblurring problems are designed in Section 4. Finally, in Section 5, we summarize and analyze all of the results. Furthermore, some future research directions are investigated.

\section{Notations and Preliminaries}
In this section, notations and the tensor-tensor T-product from the numerical linear algebra will be introduced. At the same time, some essential lemmas, theorems and properties \cite{hao2013facial, kilmer2013third, kilmer2011factorization, newman2018stable} will be analyzed.

\begin{definition}
Given a tensor $\mathcal A\in\mathbb R^{m\times n\times p}$. Its frontal, horizontal and lateral slices are  defined respectively by
\begin{equation*}
\begin{cases}
&\mathcal A[:,\ :,\ i]\in\mathbb R^{m\times n\times 1},\ \ i=1,2,\ldots,p\\
&\mathcal A[j,\ :,\ :]\in\mathbb R^{1\times n\times p},\ \ j=1,2,\ldots,m\\
&\mathcal A[:,\ k,\ :]\in\mathbb R^{m\times 1\times p},\ \ k=1,2,\ldots,n.
\end{cases}
\end{equation*}
\end{definition}
This can be illuminated more clearly by Fig. \ref{F21}.

\begin{figure}[ht]
\centering
\includegraphics[width=4in]{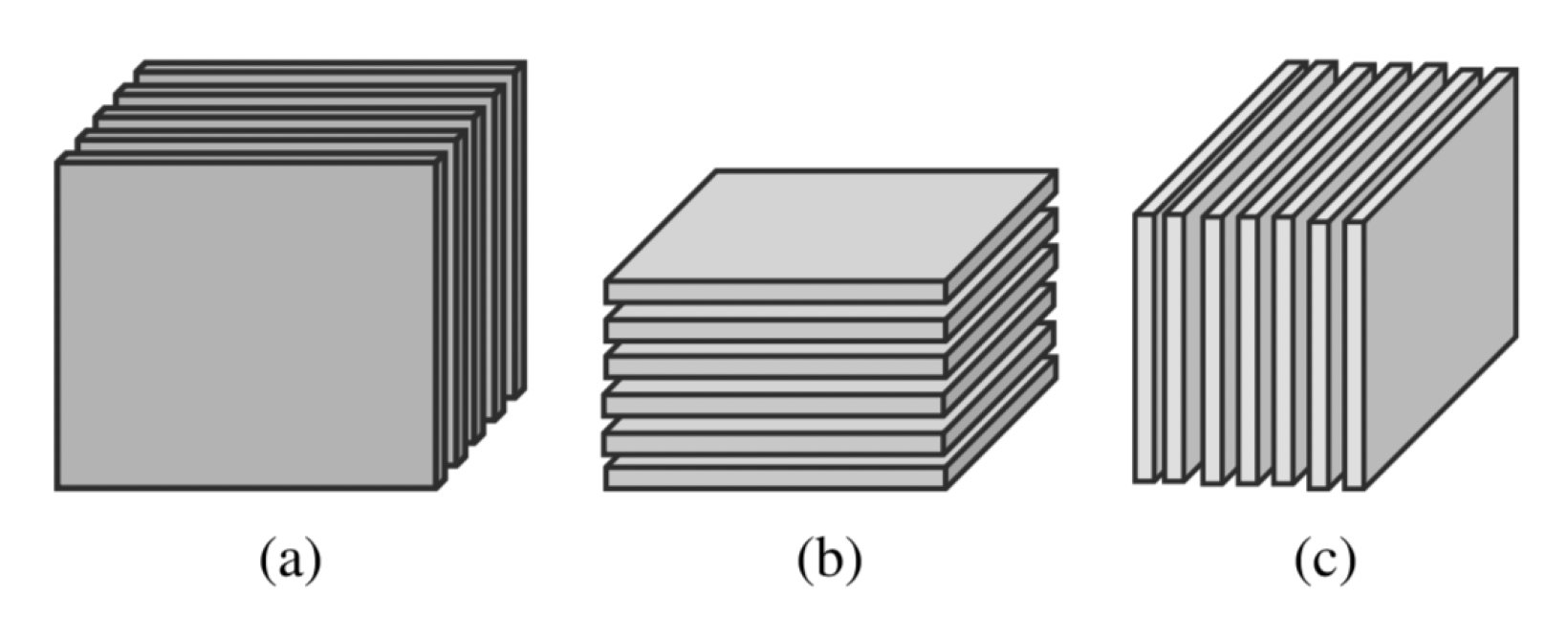}
\caption{{\rm (a)} frontal, {\rm (b)} horizontal, {\rm (c)} lateral slices of a third order tensor}
\label{F21}
\end{figure}

Taking real number field as an example, we introduce the notations used in this paper.

\begin{table}[htbp]
\centering
\caption{Some essential notations}
\begin{tabular}{cccc}
\toprule
\textbf{Item} & \textbf{Number Fields} & \textbf{Examples} & \textbf{Notations}  \\
\hline
Scalar     &      $\mathbb R^{1}$    &       $a,b,c$ & Lowercase letters  \\

 Vector    &      $\mathbb R^{n}$              &    $\textbf{a,b,c}$   &    Bold lowercase letters \\
 
  Matrix   &        $\mathbb R^{m\times n}$            & $A, B, C$    &   Capital letters \\
  Tensor & $\mathbb R^{m\times n\times p}$  & $\mathcal {A, B ,C}$&Fraktur Capital Letters\\
  Tube of Tensor &$\mathbb R^{1\times 1\times p}$ & $\tilde{ a}, \tilde{ b}, \tilde{ c}$&  Lowercase letters with tilde\\
  Slice of Tensor &$\mathbb R^{m\times 1\times p}$ & \textbf{A, B, C} & Bold capital letters\\
  \bottomrule
\end{tabular}
\end{table}
However, if we want to emphasize that an array is a tensor, we will adopt the notation associated with tensors, the fraktur capital letters.
\begin{definition}
{\rm (Tensor Block Circulant Operator)}
The definition of tensor block circulant operators can be presented as follows,
\begin{equation}
\begin{aligned}
\rm{bcirc}(\mathcal A)&=
\begin{bmatrix}
A^{(1)} & A^{(p)} & A^{(p-1)} & \cdots & A^{(2)}\\
A^{(2)} & A^{(1)} & A^{(p)} & \cdots & A^{(3)}\\
A^{(3)} & A^{(2)} & A^{(1)} & \cdots & A^{(4)}\\
\vdots & \vdots & \vdots & \ddots & \vdots\\
A^{(p)} & A^{(p-1)} & A^{(p-2)} & \cdots & A^{(1)}
\end{bmatrix}\\
&=\left(F^{\rm H}_p\otimes I_m\right)
\begin{bmatrix}
A_1 & & & \\
 & A_2 & & \\
 & & \ddots & \\
 & & & A_p
\end{bmatrix}
\left(F_p\otimes I_n\right)\in\mathbb{R}^{mp\times np},
\end{aligned}
\end{equation}
where $\mathcal A\in\mathbb{R}^{m\times n\times p}$, $A^{(i)}\in\mathbb R^{m\times n},\ A_i\in\mathbb C^{m\times n}, (i=1,2,\ldots,p)$ and $F_p=\frac{1}{\sqrt p}\left(\omega^{(i-1)(j-1)}\right)_{i=1,j=1}^p$ is the discrete Fourier matrix, where $\omega =e^{-2\pi {\bf i} /p}$ is a primitive {\rm $p$-th} root of unity in which ${\bf i}^{2}=-1$, $I_m$ is the identity matrix of order $m$.
\begin{equation*}
F_p=\frac{1}{\sqrt{p}}
\begin{bmatrix}
1 & 1 & 1 & 1 & \cdots & 1 \\
1 & \omega^{1}& \omega^{2}& \omega^{3}& \cdots & \omega^{p-1} \\
1 & \omega^{2}& \omega^{4}& \omega^{6}& \cdots & \omega^{2(p-1)} \\
1 & \omega^{3}& \omega^{6}& \omega^{9}& \cdots & \omega^{3(p-1)} \\
\vdots& \vdots& \vdots&\vdots&\ddots&\vdots\\
1 & \omega^{p-1}& \omega^{2(p-1)}& \omega^{3(p-1)}& \cdots & \omega^{(p-1)(p-1)} \\
\end{bmatrix}\in\mathbb{R}^{p\times p}.
\end{equation*}
\end{definition} 
\begin{definition} {\rm (Tensor T-product)} Suppose $\mathcal A\in\mathbb{R}^{m\times n\times p}$ and $\mathcal B\in\mathbb{R}^{n\times s\times p}$. Then the tensor product (T-product) can be defined as
\begin{equation}
\label{T_product}
\mathcal A *_{\rm T} \mathcal B = \rm{fold}({\rm bcric}(\mathcal A)\rm{unfold}(\mathcal B)),
\end{equation}
where ``fold'' is the inverse operation of 
\begin{equation*}
\rm{unfold}(\mathcal B):=\begin{bmatrix}
\mathcal B^{(1)}\\
\mathcal B^{(2)}\\
\vdots\\
\mathcal B^{(p)}\\
\end{bmatrix}\in\mathbb{R}^{np\times s}, \qquad {\mathcal B^{(i)}}\in\mathbb{R}^{n\times s}.
\end{equation*}
\end{definition}

From now, unless stated otherwise, the T-product is written in shorthand as ``$*$''. Considering the $(i,j)$-th tube of the tensor $\mathcal C\in\mathbb{R}^{1\times 1\times p}$, the T-product in (\ref{T_product}) is similar to matrix multiplications,
\begin{equation}
\mathcal C(i,j,:)=\sum_{k=1}^n\mathcal A(i,k,:) * \mathcal B(k,j,:).
\end{equation}

Using notations from MATLAB, we define  
$$
\widehat{\mathcal A}=\mbox{fft}(\mathcal A,[\ ],3).
$$
as the discrete Fast Fourier Transform (FFT) of a given tensor $\mathcal A$ along the third dimension and the T-product can be written in the FFT form, which is more economical for numerical computations. Symmetrically, the inverse Fast Fourier Transform (IFFT) can be defined. 
\begin{definition}
{\rm (Tensor T-product in FFT form)} Suppose the fast Fourier transforms of the given tensors $\mathcal A\in \mathbb{R}^{m\times n\times p}$ and $\mathcal B\in \mathbb{R}^{n\times s\times p}$ are
\begin{equation}
\widehat{\mathcal A}=\rm{fft}(\mathcal A,[\ ],3),\ \ \ 
\widehat{\mathcal B}=\rm{fft}(\mathcal B,[\ ],3).
\end{equation}
The T-product between $\mathcal A$ and $\mathcal B$ is computed by
\begin{equation}
\mathcal C(:, :, i)=(\mathcal A * \mathcal B )(:, :, i)=\rm{ifft}\left(\hat{\mathcal A}(:, :, i)\hat{\mathcal B}(:, :, i)\right),
\end{equation}
for $i=1,2,\ldots,p,$ where $\hat{\mathcal A}(:,:,i)$ and $\hat{\mathcal B}(:,:,i)$ can be seen as matrices.
\end{definition}

Combining the above analysis and definitions, we can summarize the algorithm \cite{lu2016tensor} as follows.
\vspace{1em}

\begin{breakablealgorithm}
	\caption{ T-product Algorithm based on the FFT}
	\label{alg:Framwork}
	\begin{algorithmic}[0] 
	\Require ~ $\mathcal A\in\mathbb R^{m\times n\times p}$ and $\mathcal B \in\mathbb R^{n\times s\times p}$
	
	\Ensure ~ $\mathcal C = \mathcal A*\mathcal B\in\mathbb R^{m\times s\times p}$\\
	\textbf{Step 1:} Compute $\widehat{\mathcal A}$ = fft($\mathcal A$, [ ], 3) and $\widehat{\mathcal B}$ = fft($\mathcal B$, [ ], 3)\\
	\textbf{Step 2:} Compute each frontal slice of $\mathcal C$
			$$\hat{\mathcal{C}}^{(i)}= \begin{cases}\hat{\mathcal {A}}^{(i)} \hat{\mathcal {B}}^{(i)}, & i=1,2, \ldots,\left\lceil\frac{p+1}{2}\right\rceil, \\ \operatorname{conj}\left(\hat{\mathcal {C}}^{\left(p-i+2\right)}\right), & i=\left\lceil\frac{p+1}{2}\right\rceil+1, \ldots, p .\end{cases}$$\\	
	\textbf{Step 3:} Transform $\hat{\mathcal C}$ by ifft operator, $\mathcal C$ = ifft($\widehat{\mathcal C}$, [ ], 3)
	\end{algorithmic}
\end{breakablealgorithm}

\vspace{1em}

\begin{definition}{\rm (Transpose and Conjugate Transpose) }
If $\mathcal A$ is a third order tensor, whose size is $m\times n \times p$, then the transpose $\mathcal A^\top$ could be defined from transposing all of the frontal slices and reversing the order of the transposed frontal slices from 2 to $p$. Similarly, the conjugate transpose $\mathcal A^{\rm H}$ could also be defined from conjugating all of the frontal slices and reversing the order of the transposed frontal slices from 2 to $p$. Writing these two relationships in MATLAB mathematical forms, we have
 \begin{equation*}
 \begin{cases}
 \mathcal A^\top[:, :, 1] &=  \mathcal A[:, :, 1]^\top,\\
 \mathcal A^\top[:, :, i] &=  \mathcal A[:, :, p+2-i]^\top\ \ {\rm for} \ i=2,3,\ldots,p,\\
 \mathcal A^{\rm H}[:, :, 1] &=  \mathcal A[:, :, 1]^{\rm H},\\
  \mathcal A^{\rm H}[:, :, i] &=  \mathcal A[:, :, p+2-i]^{\rm H}\ \ {\rm for} \ i=2,3,\ldots,p.
 \end{cases} 
 \end{equation*}
\end{definition} 
\begin{definition}
{\rm (Identity Tensor)} The $n\times n\times p$ identity tensor $\mathcal I_{nnp}$ is defined as a tensor whose first frontal slice is the $n \times n$ identity matrix, and whose other frontal slices are all zeros.
\end{definition}

It is easy to check that for all $\mathcal A\in\mathbb{R}^{m\times n\times p}$, $\mathcal A*\mathcal I_{nnp}=\mathcal I_{mmp} * \mathcal A=\mathcal A$. 

\begin{definition}
{\rm (Orthogonal Tensor)}
Tensor $\mathcal P$ is orthogonal if and only if $\mathcal P$ satisfies 
\begin{equation*}
\mathcal P^\top * \mathcal P = \mathcal P * \mathcal P ^\top = \mathcal I.
\end{equation*}
\end{definition}

\begin{definition}
{\rm (Tensor T-Inverse)}
The inverse tensor of frontal square tensor $\mathcal A\in\mathbb{R}^{n\times n\times p} $ can be defined as $\mathcal A^{-1}$, which satisfies 
\begin{equation*}
\mathcal A^{-1} * \mathcal A=\mathcal I_{nnp},\ \ \mathcal A * \mathcal A^{-1}=\mathcal I_{nnp}.
\end{equation*}
\end{definition}

 \begin{lemma}
 \label{TTlemma}{\rm \cite{lund2020tensor}}
  Using the above definitions of T-product and bcric operator, it holds that
  
{\rm (1)} $ {\rm bcric}(\mathcal A*\mathcal B) = {\rm bcric}(\mathcal A)\cdot {\rm bcric}(\mathcal B),$

{\rm (2)} $(\mathcal A * \mathcal B)^{\rm H}=\mathcal B^{\rm H} * \mathcal A^{\rm H}$, $(\mathcal A * \mathcal B)^\top=\mathcal B^\top * \mathcal A^\top$,

{\rm (3)} ${\rm bcric}(\mathcal A^\top)={\rm bcric}(\mathcal A)^\top$,

{\rm (4)} ${\rm bcric}(\mathcal A^{\rm H})={\rm bcric}(\mathcal A)^{\rm H}$.
 \end{lemma}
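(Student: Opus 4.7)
The plan is to prove the four parts in the order (3), (4), (1), (2), so each step can build on the earlier ones. Parts (3) and (4) follow by direct block-entry comparison, part (1) uses the FFT block-diagonalization already built into the definition of ${\rm bcirc}$, and part (2) then drops out by applying ${\rm bcirc}$ and invoking its injectivity.

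For part (3), I would let $B^{(k)}$ denote the $k$-th frontal slice of $\mathcal A^\top$, so $B^{(1)}=A^{(1)\top}$ and $B^{(k)}=A^{(p+2-k)\top}$ for $k\geq 2$. Reading off the definition, the $(i,j)$ block of ${\rm bcirc}(\mathcal A^\top)$ is $B^{(((i-j)\bmod p)+1)}$, whereas the $(i,j)$ block of ${\rm bcirc}(\mathcal A)^\top$ is $\bigl(A^{(((j-i)\bmod p)+1)}\bigr)^\top$. For $i=j$ both reduce to $A^{(1)\top}$. For $i\neq j$ the modular identity $p+2-\bigl((i-j)\bmod p+1\bigr)=(j-i)\bmod p+1$ inside $\{1,\ldots,p\}$ finishes the check. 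Part (4) is verbatim with conjugate transpose in place of transpose.

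For part (1), the definition already supplies the factorization ${\rm bcirc}(\mathcal A)=(F_p^{\rm H}\otimes I_m)D_{\mathcal A}(F_p\otimes I_n)$, with $D_{\mathcal A}={\rm blockdiag}(\widehat{\mathcal A}^{(1)},\ldots,\widehat{\mathcal A}^{(p)})$, and analogously for $\mathcal B$. Multiplying ${\rm bcirc}(\mathcal A)\cdot{\rm bcirc}(\mathcal B)$, the middle factor $(F_p\otimes I_n)(F_p^{\rm H}\otimes I_n)$ collapses to $I_{np}$, leaving $(F_p^{\rm H}\otimes I_m)D_{\mathcal A}D_{\mathcal B}(F_p\otimes I_s)$. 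The product $D_{\mathcal A}D_{\mathcal B}$ is block-diagonal with blocks $\widehat{\mathcal A}^{(i)}\widehat{\mathcal B}^{(i)}$, which Algorithm~1 identifies as the frontal slices of $\widehat{\mathcal A*\mathcal B}$; the whole expression therefore equals ${\rm bcirc}(\mathcal A*\mathcal B)$.

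For part (2), apply ${\rm bcirc}$ to both sides of the claimed identity and chain the parts already proved:
$$
{\rm bcirc}\bigl((\mathcal A*\mathcal B)^\top\bigr)={\rm bcirc}(\mathcal A*\mathcal B)^\top=\bigl({\rm bcirc}(\mathcal A){\rm bcirc}(\mathcal B)\bigr)^\top={\rm bcirc}(\mathcal B^\top)\,{\rm bcirc}(\mathcal A^\top)={\rm bcirc}(\mathcal B^\top*\mathcal A^\top),
$$
invoking (3) at steps one and three and (1) at steps two and four. Since ${\rm bcirc}$ is injective (its first block column is exactly the unfolding of the tensor), the underlying tensor identity follows; the conjugate-transpose version is identical with (4) in place of (3). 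The only nontrivial bookkeeping is the modular-indexing check in part (3), where the recipe ``$\mathcal A^\top[:,:,i]=\mathcal A[:,:,p+2-i]^\top$'' must be matched against the circulant index $(i-j)\bmod p+1$ with a brief case split for $i=j$ versus $i\neq j$; once that is in place, the remaining three parts are essentially linear-algebra corollaries.
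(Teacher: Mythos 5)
Your proof is correct, but note that the paper itself gives no proof of this lemma: it is simply quoted from \cite{lund2020tensor}, so your argument is a self-contained verification rather than a variant of anything in the text. The block-index computation for (3)--(4) (including the case split $i=j$ versus $i\neq j$ and the identity $p+2-\bigl((i-j)\bmod p+1\bigr)=(j-i)\bmod p+1$) is right, and the deduction of (2) from (1), (3), (4) by applying ${\rm bcirc}$ and using its injectivity (the first block column of ${\rm bcirc}(\mathcal A)$ is ${\rm unfold}(\mathcal A)$) is sound. The one point to tighten is part (1): you identify the diagonal blocks $\widehat{\mathcal A}^{(i)}\widehat{\mathcal B}^{(i)}$ as the Fourier slices of $\mathcal A*\mathcal B$ by appealing to Algorithm~1/Definition~4, but the equivalence of that FFT form with the fold--bcirc--unfold definition of the T-product is essentially the content of (1) itself, so as written this step is mildly circular unless Definition~4 is accepted as an independent characterization (which the paper does, tacitly). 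A clean fix that stays entirely inside the fold--bcirc--unfold definition: your factorization already shows ${\rm bcirc}(\mathcal A)\,{\rm bcirc}(\mathcal B)=(F_p^{\rm H}\otimes I_m)D_{\mathcal A}D_{\mathcal B}(F_p\otimes I_s)$ is again block circulant; a block circulant matrix is determined by its first block column, and the first block column of this product is ${\rm bcirc}(\mathcal A)\,{\rm unfold}(\mathcal B)={\rm unfold}(\mathcal A*\mathcal B)$, which is exactly the first block column of ${\rm bcirc}(\mathcal A*\mathcal B)$. With that substitution the argument is complete and independent of the FFT formulation.
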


\begin{definition} {\rm \cite{miao2020generalized}
(Tensor Moore-Penrose Inverse)} Suppose $\mathcal A\in\mathbb{R}^{m\times n\times p}$. If $\mathcal B\in\mathbb{R}^{n\times m\times p}$ satisfies 
\begin{equation}
\begin{array}{r}
\mathcal{A} * \mathcal{B} * \mathcal{A}=\mathcal{A},\ \mathcal{B} * \mathcal{A} * \mathcal{B}=\mathcal{B}, \ 
(\mathcal{A} * \mathcal{B})^{\top}=\mathcal{A} * \mathcal{B},\ (\mathcal{B} * \mathcal{A})^{\top}=\mathcal{B} * \mathcal{A}.
\end{array}
\end{equation}
then $\mathcal B$ is called as the tensor Moore-Penrose inverse of $\mathcal A$, which is denoted by
\begin{equation*}
\mathcal B=\mathcal A^\dagger.
\end{equation*}
\end{definition}
The singular value decomposition (SVD) has a lot of applications in plenty of fields \cite{golub2013matrix}. We transform $\rm{bcirc}(\mathcal A)$ into the Fourier domain and take the SVD into each diagonal blocks,
\begin{equation*}
\left[\begin{array}{llll}
D_1 & & &\\
& D_2 & & \\
& & \ddots & \\
& & & D_{n_3}
\end{array}\right]=\left[\begin{array}{llll}
U_1 & & & \\
& U_2 &  & \\
& & \ddots & \\
& & & U_{n_3}
\end{array}\right]\left[\begin{array}{llll}
\Sigma_1 & & &\\
& \Sigma_2 &  &\\
& & \ddots & \\
& & & \Sigma_{n_3}
\end{array}\right]\left[\begin{array}{cccc}
V_1^\top & &  &\\
& V_2^\top  &  &\\
& &  \ddots & \\
& & & V_{n_3}^\top
\end{array}\right] \text {. }
\end{equation*}
We could gain the tensor singular value decomposition (T-SVD), which has been studied in recent years \cite{lu2016tensor, lu2019tensor, miao2021t, Newman2017Image, zhang2016exact, zhang2014novel}.
\begin{theorem}
The tensor $\mathcal A\in\mathbb{R}^{m\times n\times p}$ can be factored as 
\begin{equation}
\label{T-SVD}
\mathcal A=\mathcal U * \mathcal S * \mathcal V^\top,
\end{equation}
where $\mathcal{U}, \ \mathcal{V}$ are orthogonal $m \times m \times p$ and $n \times n \times p$ tensors, respectively, and $\mathcal{S}$ is an $m \times n \times p$ f-diagonal tensor. Here, f-diagonal means that the elements of tensor $\mathcal A$ satisfy $\mathcal A[i,i,:]\neq 0$ and $\mathcal A[i,j,:]= 0$ for $i\neq j$. The factorization (\ref{T-SVD}) is called the T-SVD (i.e., tensor SVD).
\end{theorem}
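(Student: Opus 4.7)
The plan is to exploit the block-diagonalization of $\mathrm{bcirc}(\mathcal A)$ in the Fourier domain, which is already displayed in the excerpt just above the theorem statement. Since $\mathrm{bcirc}(\mathcal A)=(F_p^{\mathrm H}\otimes I_m)\,\mathrm{blkdiag}(A_1,\ldots,A_p)\,(F_p\otimes I_n)$, the construction of the T-SVD reduces to performing an ordinary matrix SVD on each diagonal block $A_i=\hat{\mathcal A}(:,:,i)$ and then reassembling everything back into a third-order tensor by the inverse FFT.

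First, I would set $\hat{\mathcal A}=\mathrm{fft}(\mathcal A,[\,],3)$ and, for each $i=1,\ldots,p$, compute a thin/full matrix SVD $\hat{\mathcal A}(:,:,i)=U_i\Sigma_i V_i^{\mathrm H}$ with $U_i\in\mathbb C^{m\times m}$ unitary, $V_i\in\mathbb C^{n\times n}$ unitary, and $\Sigma_i\in\mathbb R^{m\times n}$ diagonal with nonnegative entries. Stack these into $\hat{\mathcal U}(:,:,i)=U_i$, $\hat{\mathcal S}(:,:,i)=\Sigma_i$, $\hat{\mathcal V}(:,:,i)=V_i$, and define $\mathcal U=\mathrm{ifft}(\hat{\mathcal U},[\,],3)$, $\mathcal S=\mathrm{ifft}(\hat{\mathcal S},[\,],3)$, $\mathcal V=\mathrm{ifft}(\hat{\mathcal V},[\,],3)$. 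Then by Algorithm~1 and the facewise character of T-product in the Fourier domain, $\mathcal U*\mathcal S*\mathcal V^{\top}$ has $i$-th Fourier slice equal to $U_i\Sigma_i V_i^{\mathrm H}=\hat{\mathcal A}(:,:,i)$, so inverting the FFT yields $\mathcal A=\mathcal U*\mathcal S*\mathcal V^{\top}$. Orthogonality of $\mathcal U$ follows because each $\hat{\mathcal U}(:,:,i)$ is unitary, so $(\hat{\mathcal U}^{\mathrm H}*\hat{\mathcal U})(:,:,i)=I_m$, and the inverse FFT of the constant tube $I_m$ is precisely $\mathcal I_{mmp}$; the same argument handles $\mathcal V$. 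The f-diagonality of $\mathcal S$ is immediate because each slice $\hat{\mathcal S}(:,:,i)$ is diagonal, and diagonality is preserved entry-wise under the IFFT along the third mode.

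The main obstacle is that the resulting $\mathcal U,\mathcal S,\mathcal V$ must be real, not merely complex, in order to match the statement. For a real tensor $\mathcal A$ one has the conjugate-symmetry $\hat{\mathcal A}(:,:,i)=\overline{\hat{\mathcal A}(:,:,p-i+2)}$ for $i=2,\ldots,p$ (and $\hat{\mathcal A}(:,:,1)$ is real), as reflected in the branching in Step~2 of Algorithm~1. The singular values $\Sigma_i$ respect this automatically, so $\mathcal S$ is real. For the singular vectors, however, the SVD is not unique (columns can be multiplied by unit-modulus phases, and singular subspaces of repeated singular values admit unitary rotations), so one must \emph{choose} $U_i,V_i$ for $i>\lceil(p+1)/2\rceil$ to be the complex conjugates of the corresponding pairs for $i\le\lceil(p+1)/2\rceil$, and choose $U_1,V_1$ (and $U_{p/2+1},V_{p/2+1}$ when $p$ is even) to be real. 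With this choice the tensors $\hat{\mathcal U},\hat{\mathcal V}$ satisfy the same conjugate-symmetry as $\hat{\mathcal A}$, and the IFFT along mode~3 produces real tensors $\mathcal U$ and $\mathcal V$.

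Finally, I would note that the resulting $\mathcal U$ is orthogonal in the real sense (since $\mathcal U*\mathcal U^{\top}$ is a real tensor whose Fourier slices are $I_m$, hence equals $\mathcal I_{mmp}$), and likewise for $\mathcal V$, completing the construction. The argument is essentially: block-diagonalize via the DFT, do matrix SVDs slicewise, reassemble. The only subtlety, and the one I would emphasize in a careful write-up, is the conjugate-symmetry bookkeeping needed to guarantee that the factors are real-valued tensors.
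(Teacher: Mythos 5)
Your proposal is correct and follows essentially the same route the paper takes: the paper's justification is precisely the block-diagonalization of $\mathrm{bcirc}(\mathcal A)$ via the DFT followed by a matrix SVD of each diagonal (Fourier-domain) block and reassembly into tensor form. Your additional conjugate-symmetry bookkeeping, which guarantees that $\mathcal U$, $\mathcal S$, $\mathcal V$ come out real, is a valid refinement of a point the paper leaves implicit.
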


\begin{definition} {\rm (vec operation of Tensor)} The ``{\rm vec}" operation of tensor $\mathcal A\in\mathbb{R}^{m\times n\times p}$ concatenates the first column through the last column in each frontal slice of the tensor and combines all of the slices together, i.e., 
\begin{equation*}
{\rm vec}({\mathcal A})=\left(\mathcal A[:, 1, 1]^\top,\ldots,{\mathcal A}[:, n, 1]^\top,\ldots, {\mathcal A}[:, 1, p]^\top,\ldots,{\mathcal A}[:, n, p]^\top\right)^\top.
\end{equation*}
\end{definition}

\begin{definition} {\rm \cite{alaa2021on}} {\rm (Tensor Kronecker Product based on T-product)} Suppose $\mathcal A\in\mathbb{R}^{m\times n\times p}$ and $\mathcal B\in\mathbb{R}^{t\times s\times p}$, whose decompositions are
\begin{equation*}
\begin{aligned}
\mathcal A &= {\rm bcirc}^{-1}\left(
\left(F^{\rm H}_p\otimes I_m\right)
\begin{bmatrix}
A_1 & & & \\
 & A_2 & & \\
 & & \ddots & \\
 & & & A_p
\end{bmatrix}
\left(F_p\otimes I_n\right)
\right),\\
\mathcal B &= {\rm bcirc}^{-1}\left(
\left(F^{\rm H}_p\otimes I_t\right)
\begin{bmatrix}
B_1 & & & \\
 & B_2 & & \\
 & & \ddots & \\
 & & & B_p
\end{bmatrix}
\left(F_p\otimes I_s\right)
\right).
\end{aligned}
\end{equation*}
Their Kronecker product based on T-product can be defined as
\begin{equation}
\mathcal A \otimes\mathcal B= {\rm bcirc}^{-1}\left(
\left(F^{\rm H}_p\otimes I_{mt}\right)
\begin{bmatrix}
A_1\otimes B_1 & & & \\
 & A_2\otimes B_2 & & \\
 & & \ddots & \\
 & & & A_p\otimes B_p
\end{bmatrix}
\left(F_p\otimes I_{sn}\right)
\right).
\end{equation}
\end{definition}

\begin{definition}
\label{T_Fnorm}
The Frobenius norm of a tensor $\mathcal X\in\mathbb{R}^{m\times n\times p}$ is
\begin{equation*}
\|\mathcal X\|_F= \sqrt{ \sum_{i=1}^m\sum_{j=1}^n\sum_{k=1}^px_{ijk}^2}.
\end{equation*}
\end{definition}
\begin{lemma}
For the T-product of two tensors $\mathcal A\in\mathbb{R}^{m\times n\times p}, \mathcal X\in\mathbb{R}^{n\times s\times p}$, the half of Frobenius norm of $\mathcal A*\mathcal X$ is noted as
\begin{equation*}
\frac{1}{2}\left\|\mathcal A*\mathcal X\right\|_F=:\mathcal F(\mathcal X).
\end{equation*}
Then the partial derivative of $\mathcal F$ with respect to $\mathcal X$ is
\begin{equation*}
\frac{\partial \mathcal F}{\partial \mathcal X}=\mathcal A^\top*\mathcal A*\mathcal X.
\end{equation*}
\end{lemma}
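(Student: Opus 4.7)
The plan is to reduce the tensor gradient to a classical matrix gradient by passing through the \texttt{unfold}/\texttt{bcirc} representation, and then translate the result back using Lemma~\ref{TTlemma}. Note first that the stated identity really concerns the squared Frobenius norm: indeed, the right-hand side $\mathcal A^\top * \mathcal A * \mathcal X$ is the gradient of $\tfrac12 \|\mathcal A * \mathcal X\|_F^2$, not of $\tfrac12\|\mathcal A * \mathcal X\|_F$, so I would read $\mathcal F(\mathcal X) = \tfrac12\|\mathcal A * \mathcal X\|_F^2$ (a typo in the display). I would also fix the convention that $\partial/\partial\mathcal X$ is taken entrywise, so that $\partial\mathcal F/\partial\mathcal X$ is the tensor whose $(i,j,k)$-entry is $\partial\mathcal F/\partial x_{ijk}$; this commutes with fold/unfold and makes the reduction legitimate.

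First I would use the definition of the T-product together with the fact that $\|\mathcal Y\|_F = \|\mathrm{unfold}(\mathcal Y)\|_F$ (immediate from Definition~\ref{T_Fnorm}, since unfolding merely rearranges entries). This gives
\begin{equation*}
\mathcal F(\mathcal X) \;=\; \tfrac12\,\bigl\|\mathrm{bcirc}(\mathcal A)\,\mathrm{unfold}(\mathcal X)\bigr\|_F^{2},
\end{equation*}
so $\mathcal F$ is, up to the identification $\mathcal X \leftrightarrow \mathrm{unfold}(\mathcal X)$, the standard quadratic matrix functional $Z \mapsto \tfrac12\|YZ\|_F^2$ with $Y=\mathrm{bcirc}(\mathcal A)$ and $Z=\mathrm{unfold}(\mathcal X)$.

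Next I would invoke the well-known matrix-calculus identity $\partial\,\tfrac12\|YZ\|_F^2/\partial Z = Y^\top Y Z$, which follows from $\|YZ\|_F^2 = \mathrm{tr}(Z^\top Y^\top Y Z)$ and the symmetry of $Y^\top Y$. Substituting back yields
\begin{equation*}
\frac{\partial \mathcal F}{\partial\,\mathrm{unfold}(\mathcal X)} \;=\; \mathrm{bcirc}(\mathcal A)^{\top}\,\mathrm{bcirc}(\mathcal A)\,\mathrm{unfold}(\mathcal X).
\end{equation*}
By parts (3) and (1) of Lemma~\ref{TTlemma}, $\mathrm{bcirc}(\mathcal A)^{\top}\mathrm{bcirc}(\mathcal A) = \mathrm{bcirc}(\mathcal A^\top)\,\mathrm{bcirc}(\mathcal A) = \mathrm{bcirc}(\mathcal A^\top * \mathcal A)$. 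Therefore the gradient above equals $\mathrm{bcirc}(\mathcal A^\top * \mathcal A)\,\mathrm{unfold}(\mathcal X) = \mathrm{unfold}\bigl(\mathcal A^\top * \mathcal A * \mathcal X\bigr)$, and folding back I recover $\partial \mathcal F/\partial \mathcal X = \mathcal A^\top * \mathcal A * \mathcal X$, as claimed.

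The only real obstacle is not the computation but justifying the very first reduction: one must verify that the entrywise partial derivative of a scalar function of a tensor pulls back to the standard matrix Jacobian under the linear (indeed bijective, isometric) map $\mathrm{unfold}$. This is just the chain rule applied to an affine reindexing, but it should be stated explicitly so that Lemma~\ref{TTlemma} can be used to re-express the matrix gradient as a T-product. As an alternative route, one could instead work in the Fourier domain, writing $\|\mathcal A * \mathcal X\|_F^2 = \tfrac1p\sum_{i=1}^p\|\hat{\mathcal A}^{(i)}\hat{\mathcal X}^{(i)}\|_F^2$ by Parseval, differentiating each slice to obtain $(\hat{\mathcal A}^{(i)})^{\rm H}\hat{\mathcal A}^{(i)}\hat{\mathcal X}^{(i)}$, and taking the inverse FFT; this gives the same answer and can serve as a cross-check.
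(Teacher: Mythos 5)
Your proof is correct and takes essentially the same route as the paper's: both flatten the T-product via $\mathrm{bcirc}$ so that the problem becomes the standard matrix gradient of a quadratic, then use $\mathrm{bcirc}(\mathcal A)^{\top}\mathrm{bcirc}(\mathcal A)=\mathrm{bcirc}(\mathcal A^{\top}*\mathcal A)$ to pull the result back to a T-product, the only (cosmetic) difference being that the paper works with $\mathrm{vec}(\mathcal X)$ and $\mathrm{bcirc}(\mathcal A)\otimes I_s$ while you work directly with $\mathrm{unfold}(\mathcal X)$. Your reading of the statement as $\mathcal F(\mathcal X)=\tfrac12\|\mathcal A*\mathcal X\|_F^{2}$ also matches the paper's own proof, which differentiates the squared norm.
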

\begin{proof}
From the definition of the tensor-tensor T-product,
\begin{equation*}
\begin{aligned}
{\rm vec}(\mathcal A*\mathcal X)=\left[{\rm bcric}(\mathcal A)\otimes I_s\right]\cdot {\rm vec}(\mathcal X).
\end{aligned}
\end{equation*}
Note that
$$
A = {\rm bcric}(\mathcal A)\otimes I_s,\ X={\rm vec}(\mathcal X),
$$
then we have
\begin{equation*}
\hat{\mathcal F}( X)=\mathcal F(\mathcal X)=\frac{1}{2}\left\| AX\right\|_2^2.
\end{equation*}
Combining with the above equations, we obtain
\begin{equation*}
\begin{aligned}
&~~~~\frac{\partial {\mathcal F}}{\partial\ {\rm vec}(\mathcal X)}
=\frac{\partial \hat{\mathcal F}}{\partial  X}\\
&=A^\top A X\\
&=
\left[
\begin{matrix}
{\rm bcric}(\mathcal A)^\top {\rm bcric}(\mathcal A) & &\\
&\ddots&\\
&&{\rm bcric}(\mathcal A)^\top {\rm bcric}(\mathcal A) 
\end{matrix}
\right]{\rm vec}(\mathcal X)\\
&=
\left[
\begin{matrix}
{\rm bcric}(\mathcal A^\top*\mathcal A)&&\\
&\ddots&\\
&&{\rm bcric}(\mathcal A^\top*\mathcal A)
\end{matrix}
\right]{\rm vec}(\mathcal X)\\
&={\rm vec}(\mathcal A^\top *\mathcal A *\mathcal X).
\end{aligned}
\end{equation*}
Take the ${\rm vec}^{-1}$ operation on both sides of the above equations,
\begin{equation*}
\frac{\partial \mathcal F}{\partial \mathcal X}={\rm vec}^{-1}\left(\frac{\partial {\mathcal F}}{\partial\ {\rm vec}(\mathcal X)}
\right)=\mathcal A^\top *\mathcal A *\mathcal X.
\end{equation*}
Now the whole proof has been finished.
\end{proof}
\begin{remark}
This property 
provides us with a lot of convenience in the following analysis and derivation.
\end{remark}

Here we introduce a theorem that will be important for the rest of our analysis to show that the tensor T-product has the same block multiplication property as traditional matrix multiplications.
\begin{theorem}  {\rm \cite{miao2020generalized}
\label{Block Multiplication}
(Tensor Block Multiplication based on T-product)} Suppose that
$$\begin{cases}
&\mathcal{A}_1 \in \mathbb{C}^{n_1 \times m_1 \times p}, \quad 
\mathcal{B}_1 \in \mathbb{C}^{n_1 \times m_2 \times p}, \quad  \mathcal{C}_1 \in \mathbb{C}^{n_2 \times m_1 \times p}, \quad  \mathcal{D}_1 \in \mathbb{C}_1^{n_2 \times m_2 \times p},\\ &\mathcal{A}_2 \in \mathbb{C}^{m_1 \times r_1 \times p},\quad  \mathcal{B}_2 \in \mathbb{C}^{m_1 \times r_2 \times p},\quad 
\mathcal{C}_2 \in \mathbb{C}^{m_2 \times r_1 \times p},\quad 
 \mathcal{D}_2 \in \mathbb{C}_1^{m_2 \times r_2 \times p},
\end{cases}$$
then the tensor block multiplication based on T-product shares the similar form with traditional matrix multiplications,
\begin{equation}
\left[\begin{array}{ll}
\mathcal{A}_1 & \mathcal{B}_1 \\
\mathcal{C}_1 & \mathcal{D}_1
\end{array}\right] *\left[\begin{array}{cc}
\mathcal{A}_2 & \mathcal{B}_2 \\
\mathcal{C}_2 & \mathcal{D}_2
\end{array}\right]=\left[\begin{array}{ll}
\mathcal{A}_1 * \mathcal{A}_2+\mathcal{B}_1 * \mathcal{C}_2 & \mathcal{A}_1 * \mathcal{B}_2+\mathcal{B}_1 * \mathcal{D}_2 \\
\mathcal{C}_1 * \mathcal{A}_2+\mathcal{D}_1 * \mathcal{C}_2 & \mathcal{C}_1 * \mathcal{B}_2+\mathcal{D}_1 * \mathcal{D}_2
\end{array}\right] .
\end{equation}
\end{theorem}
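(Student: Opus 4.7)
The plan is to reduce the claim to ordinary block matrix multiplication by passing to the Fourier domain, where the T-product decouples into independent slice-wise matrix multiplications. First I would observe that block-tensor concatenation is just a stacking of tubes along the first two modes, and the FFT along the third mode is linear and tube-wise. Consequently, if $\mathcal M = \left[\begin{smallmatrix}\mathcal{A}_1 & \mathcal{B}_1 \\ \mathcal{C}_1 & \mathcal{D}_1\end{smallmatrix}\right]$, then for each $i=1,\dots,p$ its $i$-th Fourier frontal slice is the block matrix
\[
\hat M^{(i)}=\begin{bmatrix}\hat A_1^{(i)} & \hat B_1^{(i)} \\ \hat C_1^{(i)} & \hat D_1^{(i)}\end{bmatrix},
\]
and similarly for $\mathcal N=\left[\begin{smallmatrix}\mathcal{A}_2 & \mathcal{B}_2 \\ \mathcal{C}_2 & \mathcal{D}_2\end{smallmatrix}\right]$.

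Next I would invoke the FFT characterization of the T-product stated in Definition of T-product in FFT form: $(\mathcal M*\mathcal N)\widehat{\phantom{N}}{}^{(i)}=\hat M^{(i)}\hat N^{(i)}$. Since ordinary block matrix multiplication is valid in the matrix case, the right-hand side expands slice by slice into
\[
\begin{bmatrix}\hat A_1^{(i)}\hat A_2^{(i)}+\hat B_1^{(i)}\hat C_2^{(i)} & \hat A_1^{(i)}\hat B_2^{(i)}+\hat B_1^{(i)}\hat D_2^{(i)} \\ \hat C_1^{(i)}\hat A_2^{(i)}+\hat D_1^{(i)}\hat C_2^{(i)} & \hat C_1^{(i)}\hat B_2^{(i)}+\hat D_1^{(i)}\hat D_2^{(i)}\end{bmatrix}.
\]
Each entry here is, slice by slice, precisely the Fourier image of one of the T-products and sums on the right-hand side of the claimed identity (using again that the FFT commutes with the addition of tensors of matching size and with stacking). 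Applying the inverse FFT and reassembling the blocks then yields the asserted formula.

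The main obstacle, such as it is, is purely bookkeeping: one must verify carefully that the FFT of a block tensor really does produce, on every frontal slice, the block matrix of the individual FFT slices, so that the slice-wise multiplication commutes with the block structure. This is immediate from the tube-wise definition of the FFT and the fact that concatenation of tensors along the first two modes leaves tubes intact, but sign and index conventions need to be respected. A strictly equivalent route is to apply $\mathrm{bcirc}$ to both sides and use Lemma \ref{TTlemma}(1) together with a permutation reshuffling circulants-of-block-matrices into block-matrices-of-circulants; the Fourier-domain argument avoids this permutation by working directly with diagonalized slices.
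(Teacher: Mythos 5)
Your proposal is correct, but there is nothing in the paper to compare it against: the paper states this theorem as an imported result, cited from \cite{miao2020generalized}, and gives no proof of its own. Your Fourier-domain reduction is the standard argument and it is sound: concatenation along the first two modes leaves tubes intact, the transform along the third mode acts tube by tube, so the $i$-th Fourier frontal slice of the block tensor is exactly the block matrix of the individual Fourier slices; the slice-wise characterization $\widehat{(\mathcal M*\mathcal N)}^{(i)}=\hat M^{(i)}\hat N^{(i)}$ then reduces the claim to ordinary block matrix multiplication, and linearity of the inverse transform (which also respects the block structure) reassembles the four blocks. Two details are worth making explicit if you write this up. First, the theorem is stated over $\mathbb{C}$, while the paper's FFT-form definition and Algorithm 1 are phrased for real tensors (the conjugate-symmetry shortcut in Step 2 is not valid verbatim for complex data); the clean way to justify the slice-wise product in the complex case is the block-diagonalization of $\mathrm{bcirc}(\cdot)$ by $F_p\otimes I$, which holds over $\mathbb{C}$. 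Second, the alternative route you mention via $\mathrm{bcirc}$ and Lemma \ref{TTlemma}(1) is indeed equivalent: $\mathrm{bcirc}$ of a block tensor is a perfect-shuffle permutation of the block matrix of the $\mathrm{bcirc}$'s of the blocks, and spelling out that permutation (and checking it is the same on both sides) is precisely the bookkeeping your Fourier-domain argument avoids.
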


%
%
%
%

\section{Tensor Regularized TLS methods}

Golub and Van Loan \cite{golub1980analysis} took errors of $A$ into consideration and established the total least squares for matrix linear equations. It is aimed to solve the following problem,
\begin{equation*}
\begin{aligned}
\min_{E,r}\ \left\|\left[E\ r\right]\right\|_F \ \ \ \ {\rm s.t.} \ \  \ b+r\in\mathcal R(A+E).
\end{aligned}
\end{equation*}

The Tikhonov method \cite{golub1999tikhonov} is to solve the above question takes the form
\begin{equation*}
\begin{aligned}
\min\left\|\left(A,b\right) - \left(\tilde{A}, \tilde{A}x\right)\right\|_F\ \ \ \ \ {\rm s.t.} \ \  \   \tilde{b}=\tilde{A}x,\ \ \left\|Lx\right\|_2\leq\delta.
\end{aligned}
\end{equation*}
Using the Lagrange multiplier formulation, the Lagrange function is
\begin{equation*}
\hat{\mathcal L}(\tilde{A},x,\mu)=\left\|\left(A,b\right) - \left(\tilde{A}, \tilde{A}x\right)\right\|_F^2+ \mu(\left\|Lx\right\|_2^2-\delta^2),
\end{equation*}
where $\mu$ is the Lagrange multiplier and $\mu=0$ holds if the inequality constraint is inactive.

Let us review the  matrix result of the RTLS, which was first derived by Golub, Hessen and O'Leary in 1999.
\begin{theorem}{\rm \cite[Theorem 2.1]{golub1999tikhonov} }
\label{thm3-1}
 For the RTLS problem, if the constraint is active, the solution $x^*$ satisfies
$$
\left(A^\top A+{\lambda_I} I+\lambda_L L^\top L\right) x^*=A^\top b, 
$$
where the parameters are given by
$$
\begin{cases}
{\lambda_I} =-\frac{\left\|A x^*-b\right\|_2^2}{1+\left\|x^*\right\|_2^2}, \\
\lambda_L =\mu\left(1+\left\|x^*\right\|_2^2\right), \\
\mu =-\frac{1}{\delta^2}\left(\frac{b^\top\left(A x^*-b\right)}{1+\left\|x^*\right\|_2^2}+\frac{\left\|A x^*-b\right\|_2^2}{\left(1+\left\|x^*\right\|_2^2\right)^2}\right) .
\end{cases}
$$
\end{theorem}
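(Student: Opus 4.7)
The plan is to reduce the Lagrangian in two stages: first eliminate the auxiliary matrix $\tilde A$ by its first-order optimality condition, then differentiate the resulting reduced functional in $x$, and finally use the active constraint $\|Lx^*\|_2=\delta$ to pin down $\mu$.

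First I would freeze $x$ and minimize $\hat{\mathcal L}$ over $\tilde A$. Because the objective splits as $\|A-\tilde A\|_F^{2}+\|b-\tilde A x\|_2^{2}$, setting the derivative in $\tilde A$ to zero gives $\tilde A(I+xx^{\top})=A+bx^{\top}$, and a Sherman--Morrison step produces
\[
\tilde A(x)=A+\frac{(b-Ax)x^{\top}}{1+\|x\|_2^{2}}.
\]
Substituting this back is the key computational step: using $A-\tilde A=-\frac{(b-Ax)x^{\top}}{1+\|x\|_2^{2}}$ and $\tilde A x-b=-\frac{b-Ax}{1+\|x\|_2^{2}}$, the two Frobenius/2-norm terms collapse to
\[
\bigl\|(A,b)-(\tilde A,\tilde A x)\bigr\|_F^{2}=\frac{\|Ax-b\|_2^{2}}{1+\|x\|_2^{2}}.
\]
So the reduced Lagrangian is $\Phi(x,\mu)=\dfrac{\|Ax-b\|_2^{2}}{1+\|x\|_2^{2}}+\mu\bigl(\|Lx\|_2^{2}-\delta^{2}\bigr)$.

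Next I would impose $\nabla_{x}\Phi=0$. A direct quotient-rule differentiation yields
\[
\frac{2A^{\top}(Ax-b)}{1+\|x\|_2^{2}}-\frac{2\|Ax-b\|_2^{2}}{(1+\|x\|_2^{2})^{2}}\,x+2\mu L^{\top}Lx=0 .
\]
Multiplying through by $(1+\|x\|_2^{2})/2$ and defining
\[
\lambda_I:=-\frac{\|Ax^{*}-b\|_2^{2}}{1+\|x^{*}\|_2^{2}},\qquad \lambda_L:=\mu\bigl(1+\|x^{*}\|_2^{2}\bigr),
\]
gives exactly the normal equation $(A^{\top}A+\lambda_I I+\lambda_L L^{\top}L)x^{*}=A^{\top}b$.

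The remaining task, and probably the only place where one must be careful, is to extract the closed form for $\mu$. For that I would left-multiply the normal equation by $(x^{*})^{\top}$, rewrite $(x^{*})^{\top}A^{\top}(Ax^{*}-b)$ as $\|Ax^{*}-b\|_2^{2}+b^{\top}(Ax^{*}-b)$ (completing the square), and use the active-constraint identity $\|Lx^{*}\|_2^{2}=\delta^{2}$. After substituting the definition of $\lambda_I$, the coefficient of $\|Ax^{*}-b\|_2^{2}$ collapses to $1/(1+\|x^{*}\|_2^{2})$, and solving for $\lambda_L$ and then dividing by $1+\|x^{*}\|_2^{2}$ produces the stated formula
\[
\mu=-\frac{1}{\delta^{2}}\!\left(\frac{b^{\top}(Ax^{*}-b)}{1+\|x^{*}\|_2^{2}}+\frac{\|Ax^{*}-b\|_2^{2}}{(1+\|x^{*}\|_2^{2})^{2}}\right).
\]
I expect this final bookkeeping step, rather than the elimination of $\tilde A$, to be the main obstacle, since it requires the right substitutions in the right order; everything else is routine first-order optimality and a Sherman--Morrison identity.
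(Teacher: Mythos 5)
Your proposal is correct: the elimination of $\tilde A$ via the stationarity condition $\tilde A(I+xx^\top)=A+bx^\top$ and Sherman--Morrison, the collapse of the misfit to $\|Ax-b\|_2^2/(1+\|x\|_2^2)$, the quotient-rule normal equation, and the extraction of $\mu$ by left-multiplying with $(x^*)^\top$ and invoking $\|Lx^*\|_2=\delta$ all check out and reproduce the stated parameter formulas exactly. Note, however, that the paper itself does not prove this statement -- it is quoted from Golub, Hansen and O'Leary -- and the proof the paper does give for its tensor analogue follows a different route: it keeps the auxiliary operator $\tilde{\mathcal A}$ throughout, writes the two stationarity conditions $\partial\hat{\mathcal L}/\partial\tilde{\mathcal A}=0$ and $\partial\hat{\mathcal L}/\partial\mathcal X=0$, introduces the residual $\mathbf{R}=\mathcal B-\tilde{\mathcal A}*\mathcal X$, and combines these relations algebraically (multiplying by $\mathcal X$, substituting $\tilde{\mathcal A}*\mathcal X=\tilde{\mathcal B}$) to reach the normal-equation form and the identities for $\widetilde{\lambda_I}$, $\widetilde{\lambda_K}$ and $\mu$, only at the end recording $\mathbf{R}=(\mathcal B-\mathcal A*\mathcal X)*(\mathcal I+\mathcal X^\top*\mathcal X)^{-1}$. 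Your route -- essentially the original Golub--Hansen--O'Leary derivation -- buys a transparent reduced functional from which the definitions of $\lambda_I$ and $\lambda_L$ are immediate, and it isolates the only delicate step (the closed form for $\mu$) cleanly; the paper's route avoids the explicit closed-form elimination and reduced quotient, which is convenient when the scalars $1+\|x\|_2^2$ become noncommuting tubal quantities under the T-product, at the price of longer algebraic manipulation. Both arguments yield the same system, and your final bookkeeping for $\mu$ is exactly right.
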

In 2002, Guo and Renaut  presented an iterative algorithm based on  the following theorem.
\begin{theorem}{\rm \cite{guo2002regularized}}
\label{matrix_form_diedai}
If the constraint is active, the solution $x^*$ of the RTLS problem satisfies
$$
B\left(x^*\right)\left(\begin{array}{c}
x^* \\
-1
\end{array}\right)=-{\lambda_I}\left(\begin{array}{c}
x^* \\
-1
\end{array}\right),
$$
where
$$
B\left(x^*\right)=\left(\begin{array}{cc}
A^\top A+\lambda_L\left(x^*\right) L^\top L & A^\top b \\
b^\top A & -\lambda_L\left(x^*\right) \delta^2+b^\top b
\end{array}\right)
$$
and $\lambda_L$ is determined in Theorem \ref{thm3-1}.
\end{theorem}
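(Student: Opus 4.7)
The plan is to unpack the block equation row-by-row and check each row against the conditions of Theorem \ref{thm3-1}. The top block of $B(x^*)\bigl(\begin{smallmatrix}x^*\\-1\end{smallmatrix}\bigr)$ evaluates to $(A^\top A+\lambda_L L^\top L)x^*-A^\top b$, so demanding that this equal $-\lambda_I x^*$ is literally the normal equation from Theorem \ref{thm3-1} after rearrangement. Thus the first block of equations is essentially free, and the work lies entirely in the scalar identity produced by the last row.

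For the bottom block, I would first compute
\[
b^\top A x^*+\lambda_L\delta^2-b^\top b = -\lambda_I,
\]
i.e.\ verify that $\lambda_I+b^\top(Ax^*-b)+\lambda_L\delta^2=0$. The natural way to produce $\delta^2$ on the correct side is to use the activity of the constraint, which lets me replace $\delta^2$ by $(x^*)^\top L^\top L x^*$. Having done that, I would take the inner product of the normal equation of Theorem \ref{thm3-1} with $x^*$, which yields
\[
(x^*)^\top A^\top A x^*+\lambda_I\|x^*\|_2^2+\lambda_L(x^*)^\top L^\top L x^*=(x^*)^\top A^\top b,
\]
and solve this for $\lambda_L(x^*)^\top L^\top L x^*$. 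Substituting back reduces the bottom-block identity to
\[
\lambda_I(1+\|x^*\|_2^2) = -\|Ax^*-b\|_2^2,
\]
which is exactly the formula for $\lambda_I$ in Theorem \ref{thm3-1}. So consistency follows from combining (i) the normal equation, (ii) the active-constraint identity $\|Lx^*\|_2^2=\delta^2$, and (iii) the closed-form expression for $\lambda_I$.

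The genuine obstacle, modest as it is, is purely bookkeeping: making sure that when we test the normal equation against $x^*$ the terms $(x^*)^\top A^\top b$ and $b^\top A x^*$ are recognized as the same scalar, and that the signs in $(Ax^*-b)^\top(Ax^*-b)$ come out right so that $\lambda_I<0$, matching the sign convention used by Theorem \ref{thm3-1}. Everything else is linear-algebraic rearrangement. Because the two blocks correspond exactly to the two hypotheses one has on $x^*$ (namely the stationarity condition of the Lagrangian and activity of the regularization constraint), no extra auxiliary result is required; the theorem is equivalent to Theorem \ref{thm3-1} plus the constraint $\|Lx^*\|_2=\delta$, merely repackaged as an eigenvalue-type equation that is convenient for fixed-point iteration.
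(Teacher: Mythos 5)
The paper itself gives no proof of this statement; it is quoted verbatim from Guo and Renaut \cite{guo2002regularized} as background, and the only in-paper analogue is the later tensor version, which is proved exactly the way you propose: the top block is the stationarity (normal) equation of Theorem \ref{thm3-1} rearranged, and the bottom block is the scalar relation obtained by testing that equation against the solution and invoking the active constraint. So your route is the right one and matches the paper's strategy for the tensor case.

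One sign slip needs fixing, though it does not affect your conclusion. The bottom component of the right-hand side $-\lambda_I\left(\begin{smallmatrix}x^*\\-1\end{smallmatrix}\right)$ is $-\lambda_I\cdot(-1)=+\lambda_I$, so the bottom-block identity to verify is
\begin{equation*}
b^\top A x^*+\lambda_L\delta^2-b^\top b=\lambda_I,\qquad\text{i.e.}\qquad \lambda_I=b^\top(Ax^*-b)+\lambda_L\delta^2,
\end{equation*}
not $\lambda_I+b^\top(Ax^*-b)+\lambda_L\delta^2=0$ as you wrote. With the sign you stated, the substitution of $\lambda_L\|Lx^*\|_2^2=(x^*)^\top A^\top b-(x^*)^\top A^\top A x^*-\lambda_I\|x^*\|_2^2$ would lead to $\lambda_I(\|x^*\|_2^2-1)=-\|Ax^*-b\|_2^2$, which contradicts the formula for $\lambda_I$ in Theorem \ref{thm3-1}; the fact that your reduction lands on the correct identity $\lambda_I(1+\|x^*\|_2^2)=-\|Ax^*-b\|_2^2$ shows you actually used the correct sign in the algebra. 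Correct that one line and the argument is complete: top block $\Leftrightarrow$ normal equation, bottom block $\Leftrightarrow$ normal equation tested with $x^*$ plus $\|Lx^*\|_2^2=\delta^2$ plus the closed form of $\lambda_I$.
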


From now on, we will extend the above two theorems to the tensor form.
\subsection{Case I: Single Lateral Slices}

As for the tensor,  the tensor total least squares (TTLS) is aimed to solve the tensor  equations based on the T-product \cite{miao2022Stochastic},
\begin{equation}
\mathcal A *\mathcal X \approx \mathcal B,
\end{equation}
where $\mathcal A\in\mathbb R^{m\times n\times p}$, $\mathcal X\in\mathbb R^{n\times 1\times p}$ and $\mathcal B\in\mathbb R^{m\times 1\times p}$. Similar to the matrix scenarios, the regularized TTLS (R-TTLS) problem has the form,
\begin{equation}
\label{RTTLS-1}
\begin{aligned}
\min_{\tilde{\mathcal A}, \tilde{\mathcal B}, \mathcal X}\left\|\left(\mathcal A,\mathcal B\right) - \left(\tilde{\mathcal A}, \tilde{\mathcal A}*\mathcal X\right)\right\|_F\ \ \  {\rm s.t.} \  \ \tilde{\mathcal B}=\tilde{\mathcal A}*\mathcal X,\ \ \left\|\mathcal  K* \mathcal X\right\|_F\leq\delta.
\end{aligned}
\end{equation}
The corresponding Lagrange multiplier formulation is 
\begin{equation}
\label{RTTLS-2}
\hat{\mathcal L}(\tilde{\mathcal A},\mathcal X,\mu)=\left\|\left(\mathcal A,\mathcal B\right) - \left(\tilde{\mathcal A}, \tilde{\mathcal A}*\mathcal X\right)\right\|_F^2+ \mu(\left\|\mathcal K*\mathcal X\right\|_F^2-\delta^2),
\end{equation}
where $\mu$ is the Lagrange multiplier.

Let us introduce an important lemma, which will be useful for later analysis.

\begin{lemma}
\label{Commutative}
Whenever it is assumed that $\mathcal A\in \mathbb{R}^{m\times 1\times p}$ and $\mathcal B\in \mathbb{R}^{1\times1\times p}$, the T-product between $\mathcal A$ and $\mathcal B$ satisfies an equation, 
\begin{equation*}
\mathcal A * \mathcal B = (\mathcal B \otimes\mathcal I_{mmp}) * \mathcal A,
\end{equation*}
where $\mathcal I_{mmp} \in\mathbb{R}^{m\times m \times p}$.
\end{lemma}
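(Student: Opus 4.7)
My plan is to pass everything into the Fourier domain, where the tensor T-product becomes slice-wise matrix multiplication and the tensor Kronecker product (as defined in the paper) becomes slice-wise Kronecker product. Since $\mathcal{B} \in \mathbb{R}^{1 \times 1 \times p}$, each frontal slice $\hat{\mathcal{B}}(:,:,i)$ is just a scalar, and the identity $\mathcal{I}_{mmp}$ has frontal slices equal to $I_m$ in the Fourier domain. This should collapse both sides to the same expression, $\hat{b}_i \, \hat{\mathcal{A}}(:,:,i)$.

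Concretely, first I would compute the left-hand side. By the FFT characterization of the T-product recalled in the paper,
\begin{equation*}
(\mathcal{A} * \mathcal{B})\widehat{\ }(:,:,i) \;=\; \hat{\mathcal{A}}(:,:,i)\,\hat{\mathcal{B}}(:,:,i) \;=\; \hat{b}_i\,\hat{\mathcal{A}}(:,:,i),
\end{equation*}
where $\hat{b}_i := \hat{\mathcal{B}}(:,:,i) \in \mathbb{R}$. Next, for the right-hand side, I would use the tensor Kronecker product definition: the Fourier-domain block-diagonal factor of $\mathcal{B} \otimes \mathcal{I}_{mmp}$ has $i$-th diagonal block $\hat{b}_i \otimes I_m = \hat{b}_i I_m$, since $\hat{b}_i$ is a scalar. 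Therefore
\begin{equation*}
\bigl((\mathcal{B} \otimes \mathcal{I}_{mmp}) * \mathcal{A}\bigr)\widehat{\ }(:,:,i) \;=\; (\hat{b}_i I_m)\,\hat{\mathcal{A}}(:,:,i) \;=\; \hat{b}_i\,\hat{\mathcal{A}}(:,:,i).
\end{equation*}
Since the two tensors agree on every frontal slice in the Fourier domain, applying the inverse FFT yields equality of the original tensors.

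Before writing this out I would quickly verify the dimension bookkeeping: $(\mathcal{B} \otimes \mathcal{I}_{mmp})$ is $m \times m \times p$, $\mathcal{A}$ is $m \times 1 \times p$, and $(\mathcal{B} \otimes \mathcal{I}_{mmp}) * \mathcal{A}$ is thus $m \times 1 \times p$, matching $\mathcal{A} * \mathcal{B}$.

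The only genuinely delicate point is the correct interpretation of $\mathcal{B} \otimes \mathcal{I}_{mmp}$ through the paper's definition — that is, confirming that the block-diagonal representation indeed has $\hat{b}_i I_m$ as its $i$-th block rather than some transposed or reshaped version. Apart from that, the argument is a routine Fourier-domain verification and presents no substantive obstacle.
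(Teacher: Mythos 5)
Your proposal is correct and takes essentially the same route as the paper: the paper's proof also works in the Fourier (block-circulant) domain, exploits that each Fourier-domain slice $\hat b_i$ of $\mathcal B$ is a scalar and hence commutes with $\hat{\mathcal A}(:,:,i)$ as $b_i I_m$, and identifies the resulting block-diagonal factor with ${\rm bcric}(\mathcal B \otimes \mathcal I_{mmp})$. The only difference is presentational, since the paper writes out the full block-circulant matrices conjugated by $F_p$ while you argue slice-wise, which amounts to the same computation.
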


\begin{proof}
\begin{equation*}
\begin{aligned}
&\text{bcirc}(\mathcal A*\mathcal B)=\text{bcirc}(\mathcal A)\text{bcirc}(\mathcal B)\\
=&\left(F^{\rm H}_p\otimes I_m\right)
\begin{bmatrix}
A_1 & & & \\
 & A_2 & & \\
 & & \ddots & \\
 & & & A_p
\end{bmatrix}
\left(F_p\otimes I_1\right) 
\left(F^{\rm H}_p\otimes I_1\right)
\begin{bmatrix}
b_1 & & & \\
 & b_2 & & \\
 & & \ddots & \\
 & & & b_p
\end{bmatrix}
\left(F_p\otimes I_1\right)\\
=&\left(F^{\rm H}_p\otimes I_m\right)
\begin{bmatrix}
A_1 & & & \\
 & A_2 & & \\
 & & \ddots & \\
 & & & A_p
\end{bmatrix}
\begin{bmatrix}
b_1 & & & \\
 & b_2 & & \\
 & & \ddots & \\
 & & & b_p
\end{bmatrix}
\left(F_p\otimes I_1\right)\\
=&\left(F^{\rm H}_p\otimes I_m\right)
\begin{bmatrix}
b_1A_1 & & & \\
 & b_2A_2 & & \\
 & & \ddots & \\
 & & & b_pA_p
\end{bmatrix}
\left(F_p\otimes I_1\right)\\
=&\left(F^{\rm H}_p\otimes I_m\right)
\begin{bmatrix}
b_1I_m & & & \\
 & b_2I_m & & \\
 & & \ddots & \\
 & & & b_pI_m
\end{bmatrix}
\begin{bmatrix}
A_1 & & & \\
 & A_2 & & \\
 & & \ddots & \\
 & & & A_p
\end{bmatrix}
\left(F_p\otimes I_1\right)\\
=&\left(F^{\rm H}_p\otimes I_m\right)
\begin{bmatrix}
b_1I_m & & & \\
 & b_2I_m & & \\
 & & \ddots & \\
 & & & b_pI_m
\end{bmatrix}
\left(F_p\otimes I_m\right) 
 \left(F^{\rm H}_p\otimes I_m\right)
\begin{bmatrix}
A_1 & & & \\
 & A_2 & & \\
 & & \ddots & \\
 & & & A_p
\end{bmatrix}
\left(F_p\otimes I_1\right)\\
=&{\rm bcric}(\mathcal B \otimes\mathcal I_{mmp}) {\rm bcric}(\mathcal A)\\
=&{\rm bcric}\left[(\mathcal B \otimes\mathcal I_{mmp}) *\mathcal A\right].
\end{aligned}
\end{equation*}
\end{proof}

\begin{theorem}
When the inequality constraint in (\ref{RTTLS-1}) degenerates into an equality, the TR-TLS solution $x_{\delta}$ is a solution to the following tensor equations
\begin{equation}\label{THM2_1}
\begin{aligned}
\Big[\mathcal A^\top*\mathcal A+\widetilde{\lambda_I}\otimes\mathcal I_{nnp}+\mathcal K^\top *\mathcal K * (\widetilde{\lambda_K} \otimes\mathcal I_{nnp})\Big]*\mathcal X
=\mathcal A^\top*\mathcal B.
\end{aligned}
\end{equation}

In the above equation (\ref{THM2_1}), there are two tube parameters $\widetilde{\lambda_I}\in\mathbb{R}^{1\times1\times p}$ and $\widetilde{\lambda_K}\in\mathbb{R}^{1\times1\times p}$, 
\begin{equation}
\label{para_relation}
\begin{cases}
 {\widetilde{\lambda_I}}=\mu\mathcal X^\top*\mathcal K^\top * \mathcal K * \mathcal X
 -\textbf{R}^\top * \textbf{R} * \mathcal X^\top *\mathcal X -\textbf{R}^\top *\mathcal B,\\
 \widetilde{\lambda_K}=\mu(\mathcal I_{11p}+\mathcal X^\top *\mathcal X),\\
 \mathcal B^\top * \textbf{R}-\textbf{R}^\top*\textbf{R} = \mu\mathcal X^\top*\mathcal K^\top* \mathcal K * \mathcal X,
\end{cases}
\end{equation}
when the system reaches a steady state.

Moreover, 
\begin{equation}
\textbf{R}=({\mathcal B-\mathcal A * \mathcal X})*{(\mathcal I+\mathcal X^\top*\mathcal X)}^{-1},
\end{equation}
and
\begin{equation}
\label{relation}
\widetilde{\lambda_K}*\mathcal X^\top * \mathcal K^\top*\mathcal K*\mathcal X=\mathcal B^\top * (\mathcal B-\mathcal A*\mathcal X)+\widetilde{\lambda_I}.
\end{equation}
\end{theorem}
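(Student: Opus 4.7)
The plan is to adapt the Golub-Hansen-O'Leary approach from the matrix setting to the T-product setting by applying first-order optimality conditions to the Lagrangian (\ref{RTTLS-2}). Denoting the residual slice by $\textbf{R} = \mathcal B - \tilde{\mathcal A}*\mathcal X$, the objective cleanly splits as $\|\mathcal A - \tilde{\mathcal A}\|_F^2 + \|\textbf{R}\|_F^2 + \mu(\|\mathcal K*\mathcal X\|_F^2 - \delta^2)$, which makes the two blocks of variables convenient to differentiate separately.

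First I would set the partial derivative of $\hat{\mathcal L}$ with respect to $\tilde{\mathcal A}$ to zero. Using the tensor analogue of $\partial\|MX\|_F^2/\partial M = 2MX X^\top$, proved exactly as in the Frobenius-norm derivative lemma by passing through ${\rm vec}$ and the block-circulant Fourier diagonalization, this yields $\tilde{\mathcal A} = \mathcal A + \textbf{R}*\mathcal X^\top$. Substituting back into the definition of $\textbf{R}$ gives $\textbf{R}*(\mathcal I + \mathcal X^\top*\mathcal X) = \mathcal B - \mathcal A*\mathcal X$, and invertibility of $\mathcal I + \mathcal X^\top*\mathcal X$ in the T-product sense produces the claimed closed-form expression for $\textbf{R}$.

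Second I would compute the derivative of $\hat{\mathcal L}$ with respect to $\mathcal X$ at the optimal $\tilde{\mathcal A}$ (the dependence of $\tilde{\mathcal A}$ on $\mathcal X$ vanishes by the envelope theorem), yielding $\tilde{\mathcal A}^\top*\textbf{R} = \mu\mathcal K^\top*\mathcal K*\mathcal X$. Substituting $\tilde{\mathcal A} = \mathcal A + \textbf{R}*\mathcal X^\top$ and using the transpose rule in Lemma \ref{TTlemma} gives $\mathcal A^\top*\textbf{R} + \mathcal X*(\textbf{R}^\top*\textbf{R}) = \mu\mathcal K^\top*\mathcal K*\mathcal X$. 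I would then left-multiply the identity $\mathcal B = \mathcal A*\mathcal X + \textbf{R}*(\mathcal I + \mathcal X^\top*\mathcal X)$ by $\mathcal A^\top$, eliminate $\mathcal A^\top*\textbf{R}$ through the preceding relation, and apply Lemma \ref{Commutative} to pull the $1\times 1\times p$ tube factors through $\mathcal X$ in the form $(\widetilde{\lambda}\otimes\mathcal I_{nnp})*\mathcal X$. Matching coefficients against (\ref{THM2_1}) forces the stated expressions for $\widetilde{\lambda_I}$ and $\widetilde{\lambda_K}$; the third identity in (\ref{para_relation}) follows by left-multiplying the $\mathcal X$-stationarity equation by $\mathcal X^\top$ and using the formula for $\textbf{R}$; and (\ref{relation}) is obtained by left-multiplying the main equation by $\mathcal X^\top$ and simplifying the resulting tube products.

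The main obstacle will be the careful bookkeeping of tube-valued factors: quantities such as $\textbf{R}^\top*\textbf{R}$, $\mathcal X^\top*\mathcal X$, and $\mathcal X^\top*\mathcal K^\top*\mathcal K*\mathcal X$ all live in $\mathbb R^{1\times 1\times p}$ and commute among themselves under the T-product, but shifting them past non-tube factors such as $\mathcal X$ requires the $(\cdot\otimes\mathcal I_{nnp})$-lifting supplied by Lemma \ref{Commutative}. A secondary subtlety is verifying that the envelope-theorem reduction is valid here, i.e., that $\partial\hat{\mathcal L}/\partial\mathcal X$ at stationarity in $\tilde{\mathcal A}$ agrees with the total derivative; this is immediate from the additive block structure of the Lagrangian. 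Once these commutations are in hand, the remainder of the argument is purely algebraic and closely parallels the classical matrix proof of Theorem \ref{thm3-1}.
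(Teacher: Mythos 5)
Your proposal is correct and follows essentially the same route as the paper: stationarity of the Lagrangian in $\tilde{\mathcal A}$ and $\mathcal X$, the residual $\textbf{R}=\mathcal B-\tilde{\mathcal A}*\mathcal X$ with $\tilde{\mathcal A}=\mathcal A+\textbf{R}*\mathcal X^\top$, and Lemma \ref{Commutative} to lift the tube factors into $(\cdot\otimes\mathcal I_{nnp})$ form, yielding the stated $\widetilde{\lambda_I}$, $\widetilde{\lambda_K}$ and the closed form of $\textbf{R}$. The only differences are cosmetic reorderings of the same algebra (eliminating $\mathcal A^\top*\textbf{R}$ from the residual identity rather than expanding $\mathcal A^\top*\mathcal A*\mathcal X$, and obtaining (\ref{relation}) by left-multiplying (\ref{THM2_1}) by $\mathcal X^\top$ instead of subtracting the two parameter formulas directly), so no gap remains.
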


\begin{proof}
Take the partial derivative of (\ref{RTTLS-2}) and set them to zeros,
\begin{equation}
\label{partial1}
\begin{aligned}
\frac{\partial \hat{\mathcal L}}{\partial \tilde{\mathcal A}}
=&\tilde{\mathcal A}-\mathcal A -(\mathcal B-\tilde{\mathcal A}*\mathcal X)\mathcal * \mathcal X^\top\\
=&\tilde{\mathcal A}-\mathcal A -\mathcal B * \mathcal X^\top+\tilde{\mathcal A}*\mathcal X*\mathcal X^\top= \mathcal O,
\end{aligned}
\end{equation}
and
\begin{equation}
\label{partial2}
\begin{aligned}
\frac{\partial \hat{\mathcal L}}{\partial {\mathcal X}}=-\tilde{\mathcal A}^\top * (\mathcal B-\tilde{\mathcal A}*\mathcal X)+\mu\mathcal K^\top*
\mathcal K*\mathcal X=0.
\end{aligned}
\end{equation}
It is obvious that (\ref{partial2}) is similar to the norm equation of the tensor least squares problem,
\begin{equation}
\left(\tilde{\mathcal A}^\top*\tilde{\mathcal A}+ \mu\mathcal{K}^\top*\mathcal K\right)*\mathcal X=\tilde{\mathcal A}^\top*\mathcal B.
\end{equation}
Combining (\ref{partial1}) and noting $\textbf{R} $ as $\textbf{R} := \mathcal B - \tilde{\mathcal A} * \mathcal X$, we arrive at
\begin{equation}
\begin{aligned}
&\mathcal A^\top*\mathcal A\\
=&\left(\tilde{\mathcal A}^\top-\mathcal X* \textbf{R}^\top\right) *  \left(\tilde{\mathcal A}-\textbf{R}*\mathcal X ^\top\right).
\end{aligned}
\end{equation}
Multiply both sides of the above equation by $\mathcal X$,
\begin{equation}
\label{tuidao1}
\begin{aligned}
&\mathcal A^\top*\mathcal A*\mathcal X\\
=&\tilde{\mathcal A}^\top*\tilde{\mathcal A} * \mathcal X- \mu\mathcal X*\mathcal X^\top* \mathcal K^\top * \mathcal K * \mathcal X+ \mathcal X * \textbf{R}^\top * \textbf{R} * \mathcal X^\top *\mathcal X \\
\ \ \ \ \  \ \ &- \mu \mathcal K^\top * \mathcal K * \mathcal X * \mathcal X^\top*\mathcal X\\
=&\tilde{\mathcal A}^\top*{\mathcal B} -\mu\mathcal K^\top*\mathcal K *\mathcal X
 -\mu\mathcal X*\mathcal X^\top* \mathcal K^\top * \mathcal K * \mathcal X\\
 &+ \mathcal X * \textbf{R}^\top * \textbf{R} * \mathcal X^\top *\mathcal X - \mu \mathcal K^\top * \mathcal K * \mathcal X *\mathcal X^\top*\mathcal X\\
=&\mathcal A^\top*{\mathcal B}+\mathcal X * \textbf{R}^\top*{\mathcal B} -\mu\mathcal K^\top*\mathcal K *\mathcal X - \mu\mathcal X*\mathcal X^\top* \mathcal K^\top * \mathcal K * \mathcal X\\
&+ \mathcal X * \textbf{R}^\top * \textbf{R} * \mathcal X^\top *\mathcal X - \mu \mathcal K^\top * \mathcal K * \mathcal X *\mathcal X^\top*\mathcal X,
\end{aligned}
\end{equation}
which combines $\tilde{\mathcal A} * \mathcal X=\tilde{\mathcal B}$ and (\ref{partial1}). As a direct result of
\begin{equation*}
\begin{aligned}
\textbf{R} &\in \mathbb{R}^{m\times1\times p},\ \ 
\mathcal B& \in\mathbb{R}^{m\times1\times p},
\end{aligned}
\end{equation*}
$\textbf{R}^\top*\mathcal B$ is a tube, i.e.,
\begin{equation*}
\textbf{R}^\top*\mathcal B\in\mathbb{R}^{1\times1\times p}.
\end{equation*}
In a similar way, we obtain
\begin{equation*}
\begin{cases}
\mathcal X^\top*\mathcal K^\top * \mathcal K * \mathcal X &\in\mathbb{R}^{1\times 1\times p}\\
 \textbf{R}^\top * \textbf{R} * \mathcal X^\top *\mathcal X 
 &\in\mathbb{R}^{1\times 1\times p}\\
 \mathcal X^\top * \mathcal X&\in\mathbb{R}^{1\times 1\times p}.
\end{cases}
\end{equation*}
In combination with {Lemma \ref{Commutative}}, (\ref{tuidao1}) is transformed to 
\begin{equation}
\begin{aligned}
\left(\mathcal A^\top*\mathcal A+\widetilde{\lambda_I}\otimes\mathcal I_{nnp}+\mathcal K^\top *\mathcal K * \widetilde{\lambda_K} \otimes\mathcal I_{nnp}\right)*\mathcal X
=\mathcal A^\top*\mathcal B,
\end{aligned}
\end{equation}
where
\begin{equation}
\label{lambda_I1}
\widetilde{\lambda_I}=\mu\mathcal X^\top*\mathcal K^\top * \mathcal K * \mathcal X
-\textbf{R}^\top * \textbf{R} * \mathcal X^\top *\mathcal X -\textbf{R}^\top *\mathcal B
\end{equation}
\begin{equation}
\label{lambda_2}
\widetilde{\lambda_K}=\mu(\mathcal I_{11p} + \mathcal X^\top*\mathcal X).
\end{equation}
Consider $\mu$ from (\ref{partial2}),
\begin{equation}
\mu\mathcal X^\top*\mathcal L^\top* \mathcal L * \mathcal X=\mathcal X^\top* \tilde{\mathcal A}^\top*\textbf{R}=\mathcal B^\top * \textbf{R}-\textbf{R}^\top*\textbf{R}.
\end{equation}

Notice that
\begin{equation}
\begin{aligned}
\textbf{R}&=\mathcal B-\tilde{\mathcal A}*\mathcal X\
=\mathcal B-\left({\mathcal A}+\textbf{R}*\mathcal X^\top\right)*\mathcal X\\
&=\mathcal B-\mathcal A*\mathcal X-\textbf{R}* \mathcal X^\top* \mathcal X,
\end{aligned}
\end{equation}
and we can obtain the relation
\begin{equation*}
\textbf{R}=({\mathcal B-\mathcal A * \mathcal X})*{(\mathcal I+\mathcal X^\top*\mathcal X)}^{-1}.
\end{equation*}
Finally, the difference between (\ref{lambda_2}) $*$ $\mathcal X^\top*\mathcal K^\top*\mathcal K*\mathcal X$ and  (\ref{lambda_I1}) can reach the relationship (\ref{relation})
\begin{equation*}
\begin{aligned}
&\widetilde{\lambda_K}*\mathcal X^\top*\mathcal K^\top*\mathcal K*\mathcal X-\widetilde{\lambda_I}\\
=&\textbf{R} ^\top *\textbf{R} *\mathcal X^\top*\mathcal X+\textbf{R}^\top*\mathcal B+\mu*\mathcal X^\top*\mathcal K^\top*\mathcal K*\mathcal X*\mathcal X ^\top*\mathcal X\\
=&\textbf{R} ^\top *\textbf{R} *\mathcal X^\top*\mathcal X+\mathcal B^\top*\textbf{R}+\mu*\mathcal X^\top*\mathcal K^\top*\mathcal K*\mathcal X*\mathcal X ^\top*\mathcal X\\
=&\textbf{R} ^\top *\textbf{R} *\mathcal X^\top*\mathcal X+\mathcal B^\top*(\mathcal B-\tilde{\mathcal A}*\mathcal X)+\mu*\mathcal X^\top*\mathcal K^\top*\mathcal K*\mathcal X*\mathcal X ^\top*\mathcal X\\
=&\textbf{R} ^\top *\textbf{R} *\mathcal X^\top*\mathcal X+\mathcal B^\top*(\mathcal B-{\mathcal A}*\mathcal X) +\mu*\mathcal X^\top*\mathcal K^\top*\mathcal K*\mathcal X*\mathcal X ^\top*\mathcal X+\mathcal B^\top*( \mathcal A-\tilde{\mathcal A})*\mathcal X.
\end{aligned}
\end{equation*}
Combining with (\ref{partial1}), all of the theorem have been proved.
\end{proof}

\begin{theorem}
If the constraint $\|\mathcal K*\mathcal X\|_F\leq \delta$ is active, the solution $\mathcal X^*$ of the TR-TLS satisfies,
\begin{equation}
\Psi(\mathcal X^*) *
\left(
\begin{matrix}
\mathcal X^*\\
-\mathcal I_{11p}
\end{matrix}
\right)=-
\left(
\begin{matrix}
\mathcal X^*\\
-\mathcal I_{11p}
\end{matrix}
\right)*\widetilde{\lambda_I},
\end{equation}
where
\begin{equation}
\begin{aligned}
&\Psi(\mathcal X^*)=
\left(
\begin{matrix}
\mathcal A^\top * \mathcal A+\mathcal K^\top *\mathcal K * \left(\widetilde{\lambda_K}\otimes\mathcal I_{nnp}\right) & \mathcal A^\top *\mathcal B\\
\mathcal B^\top *\mathcal A & 
-\widetilde{\lambda_K}*\mathcal X^\top*\mathcal K^\top*\mathcal K*\mathcal X +\mathcal B^\top *\mathcal B   
\end{matrix}
\right)
\end{aligned}
\end{equation}
where  $\widetilde{\lambda_I}$ and $ \widetilde{\lambda_K}$ are determined in (\ref{para_relation}).
\end{theorem}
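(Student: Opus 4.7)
The plan is to treat the claimed identity as a purely formal consequence of the tensor block multiplication rule (Theorem \ref{Block Multiplication}) applied to the previously established first-order optimality conditions. In other words, the new theorem is not introducing any extra information; it is a compact block-form repackaging of equations (\ref{THM2_1}) and (\ref{relation}). So the strategy is to expand the left-hand side block-by-block and read off two coupled tensor identities, each of which is already available.

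First I will apply Theorem \ref{Block Multiplication} to
$\Psi(\mathcal X^*) * \bigl(\mathcal X^{*\top},\; -\mathcal I_{11p}^{\top}\bigr)^{\top}$,
which produces one $n\times 1\times p$ block on top and one $1\times 1\times p$ tube on the bottom. The top block is
\[
\bigl(\mathcal A^\top*\mathcal A + \mathcal K^\top*\mathcal K*(\widetilde{\lambda_K}\otimes\mathcal I_{nnp})\bigr)*\mathcal X^* - \mathcal A^\top*\mathcal B,
\]
while the right-hand side contributes $-\mathcal X^**\widetilde{\lambda_I}$. Since $\widetilde{\lambda_I}\in\mathbb R^{1\times 1\times p}$ and $\mathcal X^*\in\mathbb R^{n\times 1\times p}$, I invoke Lemma \ref{Commutative} to rewrite $\mathcal X^**\widetilde{\lambda_I} = (\widetilde{\lambda_I}\otimes\mathcal I_{nnp})*\mathcal X^*$, which moves the scalar tube to the left of $\mathcal X^*$. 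Matching the top blocks then reproduces equation (\ref{THM2_1}) verbatim, so the top-block identity is automatic.

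Next I will match the bottom tube. The bottom block of the product is
\[
\mathcal B^\top*\mathcal A*\mathcal X^* + \widetilde{\lambda_K}*\mathcal X^{*\top}*\mathcal K^\top*\mathcal K*\mathcal X^* - \mathcal B^\top*\mathcal B,
\]
while the right-hand side contributes $-(-\mathcal I_{11p})*\widetilde{\lambda_I} = \widetilde{\lambda_I}$. Rearranging gives
\[
\widetilde{\lambda_K}*\mathcal X^{*\top}*\mathcal K^\top*\mathcal K*\mathcal X^* = \widetilde{\lambda_I} + \mathcal B^\top*(\mathcal B - \mathcal A*\mathcal X^*),
\]
which is exactly (\ref{relation}). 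Hence the bottom-block identity also reduces to a relation that has already been proved.

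The only technical point I have to watch is the order of the T-product factors involving the tubes $\widetilde{\lambda_I}$ and $\widetilde{\lambda_K}$. These are $1\times 1\times p$ objects, so they commute with tubes but do not commute with general tensors under $*$; Lemma \ref{Commutative} supplies the precise identity $\mathcal X^**\widetilde{\lambda_I} = (\widetilde{\lambda_I}\otimes\mathcal I_{nnp})*\mathcal X^*$ needed in the top block, and for the bottom block everything already lives in $\mathbb R^{1\times 1\times p}$ so no commutation issue arises. With those bookkeeping steps in place the proof is just the two block computations above followed by citing (\ref{THM2_1}) and (\ref{relation}); no new inequality or optimality argument is required, since activeness of the constraint is already embedded in the previous theorem that produced those two equations.
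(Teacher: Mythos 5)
Your proposal is correct and follows essentially the same route as the paper: both reduce the block identity to the two previously established equations (\ref{THM2_1}) and (\ref{relation}), using Lemma \ref{Commutative} to move the tube $\widetilde{\lambda_I}$ past $\mathcal X^*$ and Theorem \ref{Block Multiplication} to assemble (or, in your direction, expand) the block T-product. The only difference is cosmetic — you expand the block form and verify each block, while the paper combines the two equations into the block form — so nothing substantive distinguishes the arguments.
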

\begin{proof}
From two equations, (\ref{THM2_1}) and (\ref{relation}), they could be transformed into,
\begin{equation*}
\left(\mathcal A^\top*\mathcal A+\mathcal K^\top *\mathcal K * \left(\widetilde{\lambda_K} \otimes\mathcal I_{nnp}\right)\right)*\mathcal X
-\mathcal A^\top*\mathcal B=-\left(\widetilde{\lambda_I}\otimes\mathcal I_{nnp}\right) * \mathcal X=-\mathcal X*\widetilde{\lambda_I},
\end{equation*}
and 
\begin{equation*}
\mathcal B^\top *\mathcal A*\mathcal X+\widetilde{\lambda_K}*\mathcal X^\top*\mathcal K^\top*\mathcal K*\mathcal X -\mathcal B^\top *\mathcal B   = \widetilde{\lambda_I}.
\end{equation*}
Together with Theorem \ref{Block Multiplication}, our proof has been completed.
\end{proof}
\newpage

\begin{breakablealgorithm}
\caption{Iterative Algorithm for the TR-TLS in Tensor Form }
	\label{Tensor Algorithm}
	\begin{algorithmic}[0] 
	\Require ~ $\mathcal A\in\mathbb R^{m\times n\times p}$, $\mathcal B \in\mathbb R^{m\times 1\times p}$, initial value  $\mathcal X^{(0)}\in\mathbb R^{n\times 1\times p}$, error bound $\delta_0$, maximum of iteration steps $K_{\max}$
	
	\Ensure ~ Final value of $\mathcal X\in\mathbb R^{n\times 1\times p}$\\
	\textbf{For} $k=0,1,2,\ldots,K_{\max}$
	
	\textbf{Step 1.} Calculate $\Psi(\mathcal X^{(k)})$
	
	\textbf{Step 2.} Compute 
	$$
	\hat{\mathcal X}^{(k+1)}= \Psi(\mathcal X^{(k)}) * \left(
	\begin{matrix}
	\mathcal X^{(k)}\\
	-\mathcal I_{11p}
	\end{matrix}
	\right)
	$$
	
	\textbf{Step 3.} Normalize the $\mathcal X^{(k+1)}$,
	$$
	\mathcal X^{(k+1)}=\hat{\mathcal X}^{(k+1)}(1:n,1,:) / \hat{\mathcal X}^{(k+1)}(n+1, 1, 1)
	$$
	
	\textbf{Step 4.}
	\textbf{If} $$\frac{\|\mathcal X^{(k+1)} - \mathcal X^{(k)}\|_F}{\|\mathcal X^{(k)}\|_F}\leq \delta_0$$
	
	\textbf{Break}
	\\
	\textbf{End For.}
	\end{algorithmic}
\end{breakablealgorithm}

\vspace{1em}
Moreover, a theorem in the matrix form is introduced.

\begin{theorem}
If the constraint $\|\mathcal K*\mathcal X\|_F\leq \delta$ is active, the solution $\mathcal X^*$ of the TR-TLS satisfies,
\begin{equation*}
\Gamma(\mathcal X^*)
\left(
\begin{matrix}
\rm{unfold}(\mathcal X)\\
-\rm{unfold}(\mathcal I_{11p})
\end{matrix}
\right)
=-\Lambda
\left(
\begin{matrix}
\rm{unfold}(\mathcal X)\\
-\rm{unfold}(\mathcal I_{11p})
\end{matrix}
\right),
\end{equation*}
where
\begin{equation*}
\Gamma(\mathcal X^*)=
\left(
\begin{matrix}
M & N\\
N^\top & P
\end{matrix}
\right), \qquad 
\Lambda=
\left(
\begin{matrix}
\Lambda_1 & 0\\
0 & \Lambda_2
\end{matrix}
\right),
\end{equation*}
and
\begin{equation*}
\begin{cases}
M&={\rm bcric}\left(\mathcal A^\top * \mathcal A+\mathcal K^\top *\mathcal K * (\widetilde{\lambda_K}(\mathcal X^*) \otimes\mathcal I_{nnp}) \right)\\
N&={\rm bcric}\left(\mathcal A^\top *\mathcal B\right)\\
P&={\rm bcric}\left(-\widetilde{\lambda_K}(\mathcal X^*)*\mathcal X^\top*\mathcal K^\top*\mathcal K*\mathcal X +\mathcal B^\top *\mathcal B \right)\\
\Lambda_1&={\rm bcric}\left(\widetilde{\lambda_I}\otimes \mathcal I_{nnp}\right)\\
\Lambda_2&={\rm bcric}\left(\widetilde{\lambda_I}\right).
\end{cases}
\end{equation*}
\end{theorem}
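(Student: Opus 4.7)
The strategy is to reduce everything to the tensor--form statement of the previous theorem and then to pass from T-products to ordinary matrix products via the fundamental identity $\mathrm{unfold}(\mathcal U * \mathcal V) = \mathrm{bcric}(\mathcal U)\cdot \mathrm{unfold}(\mathcal V)$ (a direct consequence of the definition of the T-product together with Lemma \ref{TTlemma}(1)). I would start from
$$
\Psi(\mathcal X^*) * \begin{pmatrix}\mathcal X^*\\ -\mathcal I_{11p}\end{pmatrix} = -\begin{pmatrix}\mathcal X^*\\ -\mathcal I_{11p}\end{pmatrix} * \widetilde{\lambda_I}
$$
and expand the block T-product on the left using Theorem \ref{Block Multiplication}, producing a ``top row'' equation in $\mathbb R^{n\times 1\times p}$ and a ``bottom row'' equation in $\mathbb R^{1\times 1\times p}$. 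No fresh content is used from the TR-TLS analysis itself; only the block structure of $\Psi(\mathcal X^*)$ is relevant here.

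Next, I would apply the unfold operator to each of those two rows. The fundamental identity turns every $*$ on the left-hand side into a $\mathrm{bcric}(\cdot)\cdot\mathrm{unfold}(\cdot)$ product, generating exactly the blocks $M,\ N,\ N^\top,\ P$ declared for $\Gamma$. The only nontrivial identification is the off-diagonal entry, where Lemma \ref{TTlemma}(3) supplies $\mathrm{bcric}(\mathcal B^\top * \mathcal A) = \mathrm{bcric}((\mathcal A^\top * \mathcal B)^\top) = N^\top$, which gives the stated symmetric form. On the right-hand side, the two terms are $-\mathcal X * \widetilde{\lambda_I}$ and $\mathcal I_{11p} * \widetilde{\lambda_I} = \widetilde{\lambda_I}$; by Lemma \ref{Commutative} the first equals $-(\widetilde{\lambda_I}\otimes\mathcal I_{nnp}) * \mathcal X$, so after unfolding the two block rows of the right-hand side become $-\Lambda_1\cdot\mathrm{unfold}(\mathcal X)$ and $-\Lambda_2\cdot(-\mathrm{unfold}(\mathcal I_{11p}))$, matching the right-hand side of the claim.

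Stacking the two block rows into a single $(n+1)p$-dimensional matrix--vector equation then yields $\Gamma(\mathcal X^*)\,\mathbf v = -\Lambda \mathbf v$ with $\mathbf v = \bigl(\mathrm{unfold}(\mathcal X)^\top,\ -\mathrm{unfold}(\mathcal I_{11p})^\top\bigr)^\top$, as stated. The main obstacle, and the only place where real care is required, is the treatment of right-multiplication by the tube $\widetilde{\lambda_I}$: it is Lemma \ref{Commutative} that certifies this operation acts as an honest block-diagonal matrix on the unfolded vector, which is precisely why $\Lambda$ is block diagonal with blocks $\mathrm{bcric}(\widetilde{\lambda_I}\otimes\mathcal I_{nnp})$ and $\mathrm{bcric}(\widetilde{\lambda_I})$, and why the two components of $\mathbf v$ do not get coupled by $\Lambda$ even though they are coupled by $\Gamma$.
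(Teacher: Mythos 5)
Your proposal is correct and in substance identical to the paper's proof: the paper simply applies the unfold operator to the two tensor equations (\ref{THM2_1}) and (\ref{relation}) and writes the result in block form, which is exactly what your two block rows amount to, since expanding the $\Psi$-equation by block T-multiplication just recovers those same two equations. The details you supply — the identity $\mathrm{unfold}(\mathcal U*\mathcal V)={\rm bcric}(\mathcal U)\,\mathrm{unfold}(\mathcal V)$, the identification ${\rm bcric}(\mathcal B^\top*\mathcal A)=N^\top$ via Lemma \ref{TTlemma}, and the use of Lemma \ref{Commutative} to make the tube $\widetilde{\lambda_I}$ act as the block-diagonal $\Lambda$ — are precisely the implicit content of the paper's terse argument.
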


\begin{proof}
Taking the ``unfold'' operator on the equations (\ref{THM2_1}) and (\ref{relation}), we can gain that,
\begin{equation*}
\begin{cases}
M\ {\rm unfold}({\mathcal X}) - N\ \rm{unfold}(\mathcal I_{11p})&=-\Lambda_1\ {\rm unfold}(\mathcal X)\\
N^\top\ {\rm unfold}({\mathcal X}) - P\ \rm{unfold}(\mathcal I_{11p}) &=-\Lambda_2\ \rm{unfold}(-\mathcal I_{11p})
\end{cases}
\end{equation*}
Writing the above formula into block forms,
\begin{equation*}
\left(
\begin{matrix}
M & N\\
N^\top & P
\end{matrix}
\right)
\left(
\begin{matrix}
\rm{unfold}(\mathcal X)\\
-\rm{unfold}(\mathcal I_{11p})
\end{matrix}
\right)
=
-\left(
\begin{matrix}
\Lambda_1 & 0\\
0 & \Lambda_2
\end{matrix}
\right)
\left(
\begin{matrix}
\rm{unfold}(\mathcal X)\\
-\rm{unfold}(\mathcal I_{11p})
\end{matrix}
\right).
\end{equation*}
Then we finish the proof of this theorem.
\end{proof}
\newpage

The estimate of the
normalized eigenvector of $\Gamma(\mathcal X)$ is given by
\begin{equation*}
\label{formula_z}
z^{(k)}=\frac{1}{1+\|\mathcal X\|_F^2}\left(
\begin{matrix}
\rm{unfold}(\mathcal X)\\
-\rm{unfold}(\mathcal I_{11p})
\end{matrix}
\right)
\end{equation*}
and we define the residual at step $k$ by
\begin{equation}
\rho ^{(k)}=\left\|\Gamma(\mathcal X)^{(k)}z^{(k)}+\Lambda(\mathcal X^{(k)}) z^{(k)}\right\|_F^2.
\end{equation}

\begin{breakablealgorithm}
\caption{Iterative Algorithm for the TR-TLS in Matrix Form}
	\label{Matrix Algorithm}
	\begin{algorithmic}[0] 
	\Require ~ $\mathcal A\in\mathbb R^{m\times n\times p}$, $\mathcal B \in\mathbb R^{n\times 1\times p}$, initial value of $\mathcal X^{(0)}$, error bound $\delta_0$, maximum of iteration steps $K_{\max}$
	
	\Ensure ~ Final value of $\mathcal X$\\
	\textbf{Step 1.} Calculate $\widetilde{\lambda_I}^{(0)}
	 \widetilde{\lambda_K}^{(0)}$ and $z^{(0)}$\\
	\textbf{Step 2.} Iterate to convergence \\
	\textbf{For} ${k}=0,1,2, \ldots, K_{\max}$
	
	(a) Solve $\left(\Gamma\left(\mathcal X^{(k)}\right)+\Lambda^{(k)}\right) y=z^{(k)}$
	
	(b) $\rm{unfold}(\mathcal X)^{(k+1)}=-y(1: np) / y(np+1)$
	
	(c) Update $\widetilde{\lambda_I}^{(k+1)}$ and $\widetilde{\lambda_K}^{(k+1)}$
	
	(d) $z^{(k+1)}=y /\|y\|_F$
	
	(e) Update $\rho^{(k+1)}$
	
	(f) \textbf{If} $$\frac{\|\mathcal X^{(k+1)} - \mathcal X^{(k)}\|_F}{\|\mathcal X^{(k)}\|_F}\leq \delta_0\ \ or\ \ \rho^{(k+1)}\leq\delta_0$$
		
	\ \ \ \ 	\textbf{break}\\
	\textbf{End For.}
	\end{algorithmic}
\end{breakablealgorithm}
\vspace{1em}

\begin{remark}
This Algorithm shares the similar form with Algorithm 1 in \cite{guo2002regularized}.
\end{remark} 
	
\subsection{Case II: Multi Lateral Slices}

The situation in multi lateral slices are often discussed in both color image and video deblurring. We repeat the calculation in the single lateral slices to achieve our goals. In next section, the numerical experiments of both color image and video deblurring are designed to illuminate the benefits of TR-TLS. Denote the map of computation in single lateral slices as 
\begin{equation}
\Phi(\mathcal A,\textbf B_i,\delta,K_{\max}) = \textbf X_i,
\end{equation}
where $\textbf{X}_i$ is the $i$-th lateral slice of tensor $\mathcal X$, i.e., $\textbf X_i=\mathcal X(:, i,:)$. Then the algorithm in multi lateral slices could be summarized as follows.
\vspace{1em}
\begin{breakablealgorithm}
\caption{Iterative Algorithm for the TR-TLS in multi lateral slices}
	\label{Multi}
	\begin{algorithmic}[0] 
	\Require ~ $\mathcal A\in\mathbb R^{m\times n\times p}$, $\mathcal B \in\mathbb R^{n\times s\times p}$, initial value of $\mathcal X^{(0)}$,  error bound $\delta_0$, maximum of iteration steps $K_{\max}$
	
	\Ensure ~ Final value of $\mathcal X$\\
	\textbf{For} $i=0,1,2,\ldots,s$
	
	Compute 
	$$
	\textbf X_i = \Phi(\mathcal A,\textbf B_i,\delta_0,K_{\max})
	$$	
	\textbf{End For.}
	\end{algorithmic}
\end{breakablealgorithm}
\vspace{0.5em}

\section{Applications and Numerical Examples}
In this section,we will test the algorithms mentioned above to demonstrate the effectiveness of TR-TLS method in applications to image and video deblurring. All experiments are carried out based on MATLAB-2021a on MacOS. The processor is Intel Core i5 with 8 GB RAM.

In this section, the original data is a tensor $\mathcal X\in\mathbb{R}^{n\times s\times p}$. Then after the map of the tensor $\mathcal A\in\mathbb{R}^{m\times n\times p}$, the observation is denoted as $\mathcal B\in\mathbb{R}^{m\times s\times p}$, i.e.,
\begin{equation}
\mathcal A_{\rm true} * \mathcal X_{\rm true} = \mathcal B_{\rm blur}.
\end{equation}

However, in practice, the observations of tensors $\mathcal A$ and $\mathcal B$ are usually subject to errors. Let us construct the error for two tensors $\mathcal A$ and $\mathcal B$, respectively. We first give the error tensor $\mathcal E$ with each element satisfying the standard normal distribution. Then the error tensors of $\mathcal A$ and $\mathcal B$ can be defined as follows,
\begin{equation*}
{\mathcal{E}}_{\mathcal A,j}:=\eta \frac{{\mathcal{E}}_{ j}}{\left\|{\mathcal{E}}_{ j}\right\|_F}\left\|{\mathcal{A}}_{\text {true }, j}\right\|_F, \quad j=1,2, \ldots, p,
\end{equation*}
and
\begin{equation*}
{\mathcal{E}}_{\mathcal B,j}:=\eta \frac{{\mathcal{E}}_{ j}}{\left\|{\mathcal{E}}_{ j}\right\|_F}\left\|{\mathcal{B}}_{\text {true }, j}\right\|_F, \quad j=1,2, \ldots, p,
\end{equation*}
where $\eta$ is set as $\eta=0.001$ in this section. 

What we should solve is a tensor regularized total least squares problem,
\begin{equation}
\begin{aligned}
\min_{\tilde{\mathcal{A}}, \tilde{\mathcal{B}}, \mathcal X} \left\|(\mathcal{A}, \mathcal{B})-\left(\tilde{\mathcal{A}}, \tilde{\mathcal{A}} * \mathcal{X}\right)\right\|_F \ \ \ \text { s.t. } \tilde{\mathcal{B}}=\tilde{\mathcal{A}} * \mathcal{X}, \quad\left\|\mathcal{K} * \mathcal{X}\right\|_F \leq \delta,
\end{aligned}
\end{equation}
of the given equations,
\begin{equation}
(\mathcal A_{\rm true} +\mathcal E_{\mathcal A}) * \mathcal X \approx \mathcal B_{\rm true} + \mathcal E_{\mathcal B}.
\end{equation}
Here, the tensor $\mathcal A_{\rm true}$ is generated \cite{doicu2010numerical} by
\begin{equation*}
\begin{cases}
\mathrm{z}=\left[\exp \left(-\left([0: \operatorname{band}-1] .^2\right) /\left(2 \sigma^2\right)\right),  \operatorname{zeros}(1, \mathrm{~N}-\mathrm{band})\right], \\
A=\frac{1}{\sigma \sqrt{2 \pi}} \text { toeplitz }([\mathrm{z}(1) \mathrm{fliplr}(\mathrm{z}(2: \text {end }))], \mathrm{z}),\\ \mathcal{A}_{true}^{(i)}=A(i, 1) A, \quad i=1,2, \ldots, 256,
\end{cases}
\end{equation*}
where $ N= 256,
\sigma = 4$, $band = 7$ and $A(i,1)$ means the first column in the matrix $A$. Following these operations, the condition number for $i$-th tensor frontal slice $\mathcal A^{(i)}$ satisfies that 
$
\text{cond}\left(\mathcal A^{(i)}\right)\approx5.35 \times 10^9,
$
for $i=1,2,\ldots,12$. The other slices have infinite condition number, i.e.,
$
\text{cond}\left(\mathcal A^{(i)}\right)=+ \infty,
$
for $ i>12$. Moreover, after the FFT operation, the condition number of each tensor frontal slice satisfies  $\text{cond}\left(\hat{\mathcal A}^{(i)}\right)\approx5.35 \times 10^9$, for $i=1,2,\ldots,256$, where $\hat{\mathcal A}=\text{fft}\left(\mathcal A,\ [], \ 3\right)$.

Meanwhile, we will use two regularization operators $\mathcal{K}_1 \in \mathbb{R}^{(m-2) \times m \times n}$ and $\mathcal{K}_2 \in \mathbb{R}^{(m-1) \times m \times n}$. The tensor $\mathcal{K}_1$ has a tridiagonal matrix as its first frontal slice,
$$
\mathcal{K}_1^{(1)}=\frac{1}{4}\left[\begin{array}{cccccc}
-1 & 2 & -1 & && \\
&-1&2&-1&&\\
&& \ddots & \ddots & \ddots & \\
&& & -1 & 2& -1
\end{array}\right] \in \mathbb{R}^{(m-2) \times m}
$$
and the remaining frontal slices $\mathcal{K}_1^{(i)} \in \mathbb{R}^{(m-2) \times m}, (i=2,3, \ldots, n)$, are zero matrices. The first frontal slice of the regularization operator $\mathcal{K}_2 \in \mathbb{R}^{(m-1) \times m \times n}$ is the bidiagonal matrix
$$
\mathcal{K}_2^{(1)}=\frac{1}{2}\left[\begin{array}{ccccc}
1 & -1 & & & \\
& 1 & -1 & & \\
& & \ddots & \ddots & \\
& & & 1 & -1
\end{array}\right] \in \mathbb{R}^{(m-1) \times m}
$$
and the remaining frontal slices $\mathcal{K}_2^{(i)} \in \mathbb{R}^{(m-1) \times m}$, ($i=2,3, \ldots, n$) are zero matrices. Other regularization operators of interest can be defined similarly.
\subsection{Applications in Image and Video Deblurring}
In this section, we mainly show the intuitive image deblurring results of the experiment. The data of grayscale images are stored by single lateral slice tensor. Color images and video data are stored by multi lateral slices and higher order tensors. All data is normalized into the interval as $[0, 1]$.

\subsubsection{Grayscale Images}
We utilize the TR-TLS method in the single lateral slices to model the deblurring problem of grayscale images. Single lateral slice tensor $\mathcal X\in \mathbb{R}^{m\times 1\times n}$ stores the data of image after ``twist'' operation, where ``twist'' is the inverse operation of ``squeeze''，
\begin{equation}
\mathcal X=\operatorname{twist}({X}) \Rightarrow {\mathcal{X}}(i, 1, j) = X(i, j).
\end{equation}

The operations, ``twist'' and ``squeeze'', build the bridge between single lateral slice tensor $\mathcal X\in \mathbb{R}^{m\times 1\times n}$ and the matrix $X\in\mathbb{R}^{m\times n}$ \cite{kilmer2013third}.
\begin{figure}[htbp]
\centering
\includegraphics[width=2in]{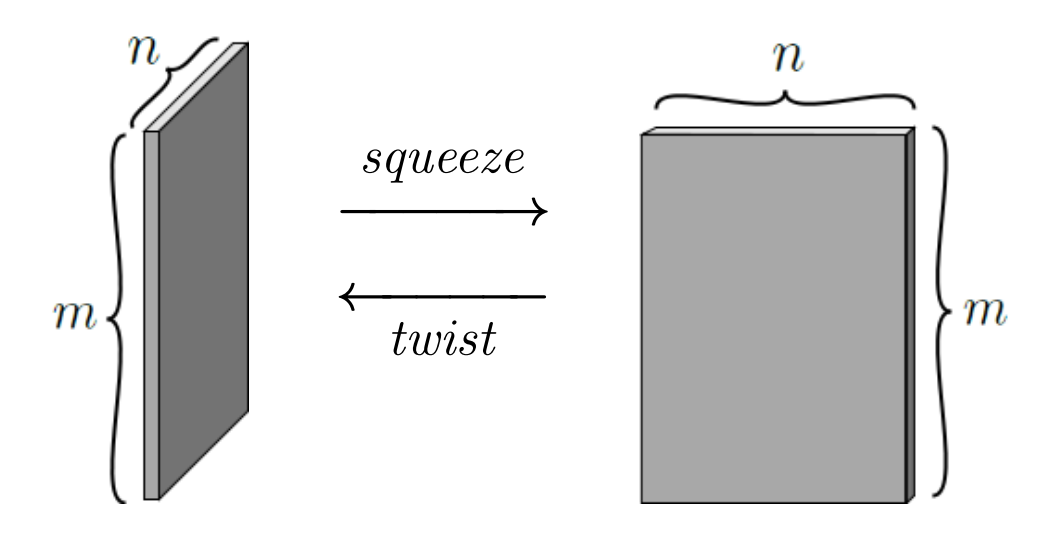}
\caption{$
m \times 1 \times n \text { tensors and } m \times n \text { matrices related through the squeeze and twist operations. }
$}
\end{figure}

There are two images tested in this case, using Algorithm \ref{Tensor Algorithm}. The following figures are listed to illuminate the effectiveness of the TR-TLS method. After the TR-TLS, we use ``squeeze'' operation to convert the tensor into a displayable image matrix.

\begin{figure}[htbp]
	\centering
	\subfigure[Original Image]{\includegraphics[width=1.3in]{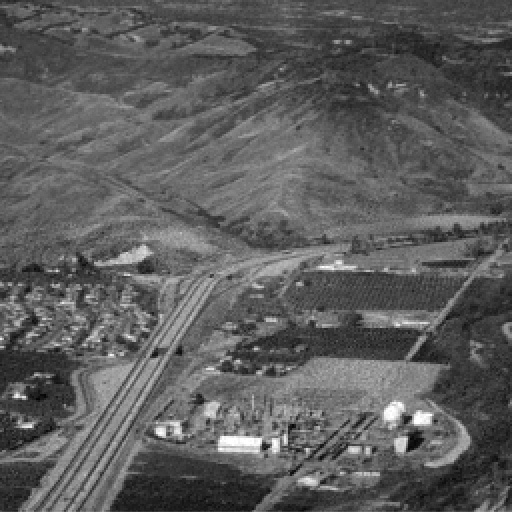}}
		\hspace{1cm}
	\subfigure[Blurred Imgae]{\includegraphics[width=1.3in]{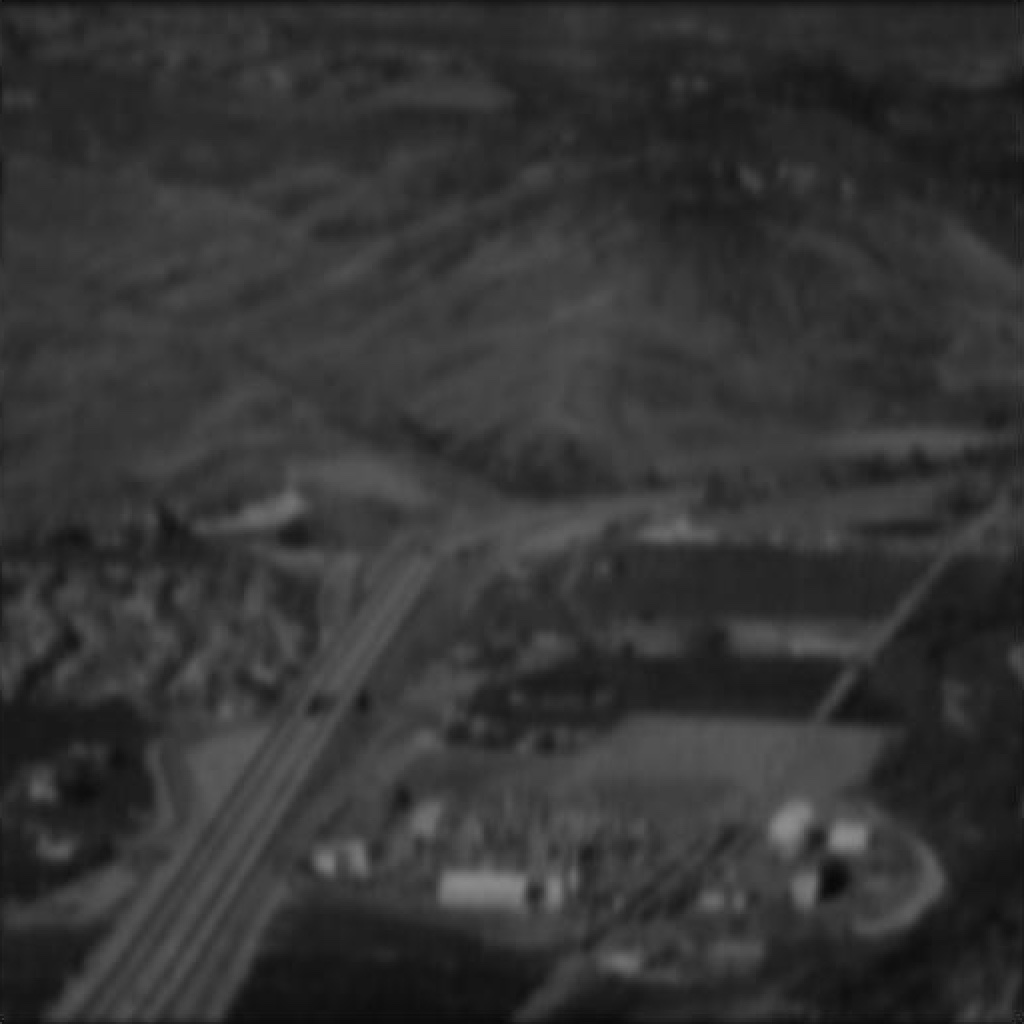}}
		\hspace{1cm}
	\subfigure[Debluurred Image]{\includegraphics[width=1.3in]{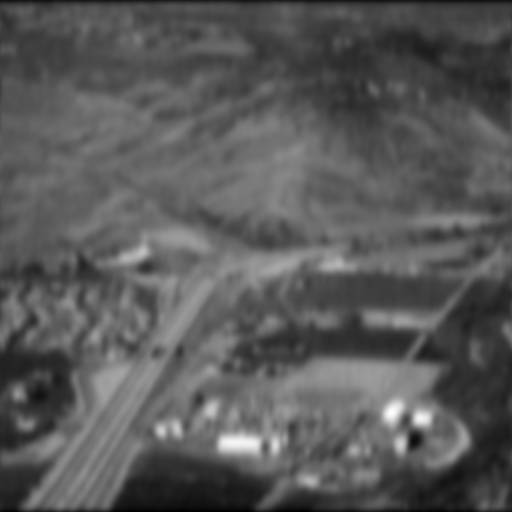}}
	\caption{Blurred and Deblurred results of City Image}
\end{figure}
\begin{figure}[htbp]
	\centering
	\subfigure[Original Image]{\includegraphics[width=1.3in]{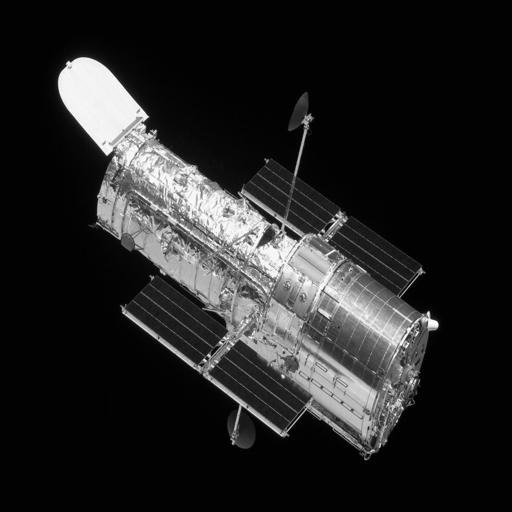}}
		\hspace{1cm}
	\subfigure[Blurred Imgae]{\includegraphics[width=1.3in]{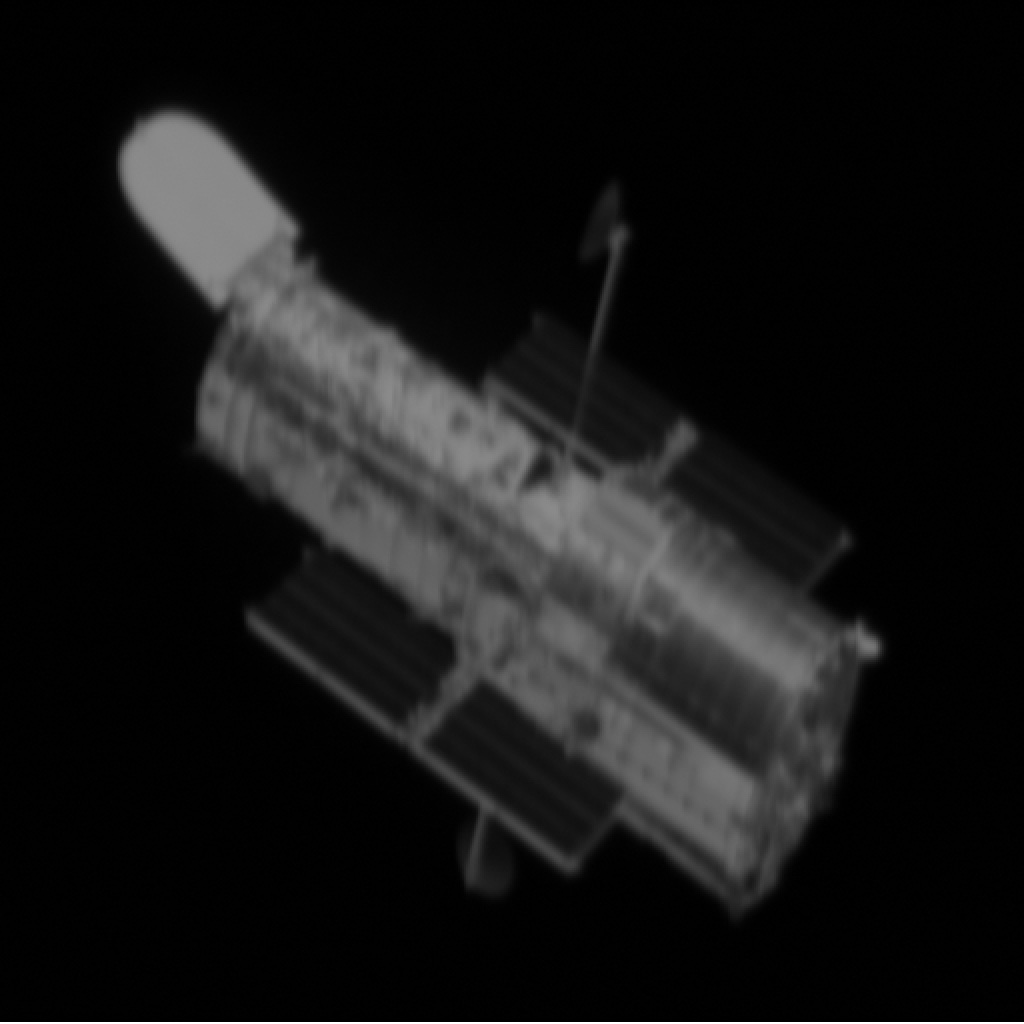}}
		\hspace{1cm}
	\subfigure[Debluurred Image]{\includegraphics[width=1.3in]{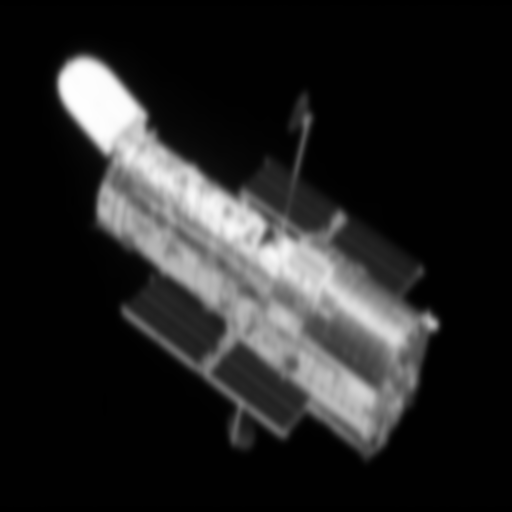}}
	\caption{Blurred and Deblurred results of Artificial Satellite Image}
\end{figure}

%

\subsubsection{Color Images}

For solving  deblurring problems, storage and modeling methods in color image are similar to those of grayscale image cases. The RGB image could be modeled as a third order tensor $\mathcal X\in\mathbb{R}^{m\times n\times 3}$, which need to be twisted into a tensor as $\hat{\mathcal X}\in\mathbb{R}^{m\times 3\times n}$. We only need to deblur three layers of RGB data respectively, and then concatenate them together to get the required restored image, using the TR-TLS in multi lateral slices with Algorithm 4.

\begin{figure}[htbp]
	\centering
	\subfigure[Original Image]{\includegraphics[width=1.3in]{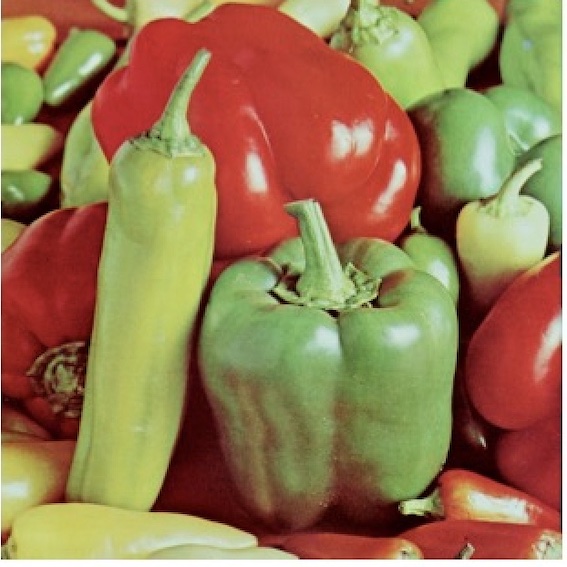}}
		\hspace{1cm}
	\subfigure[Blurred Imgae]{\includegraphics[width=1.3in]{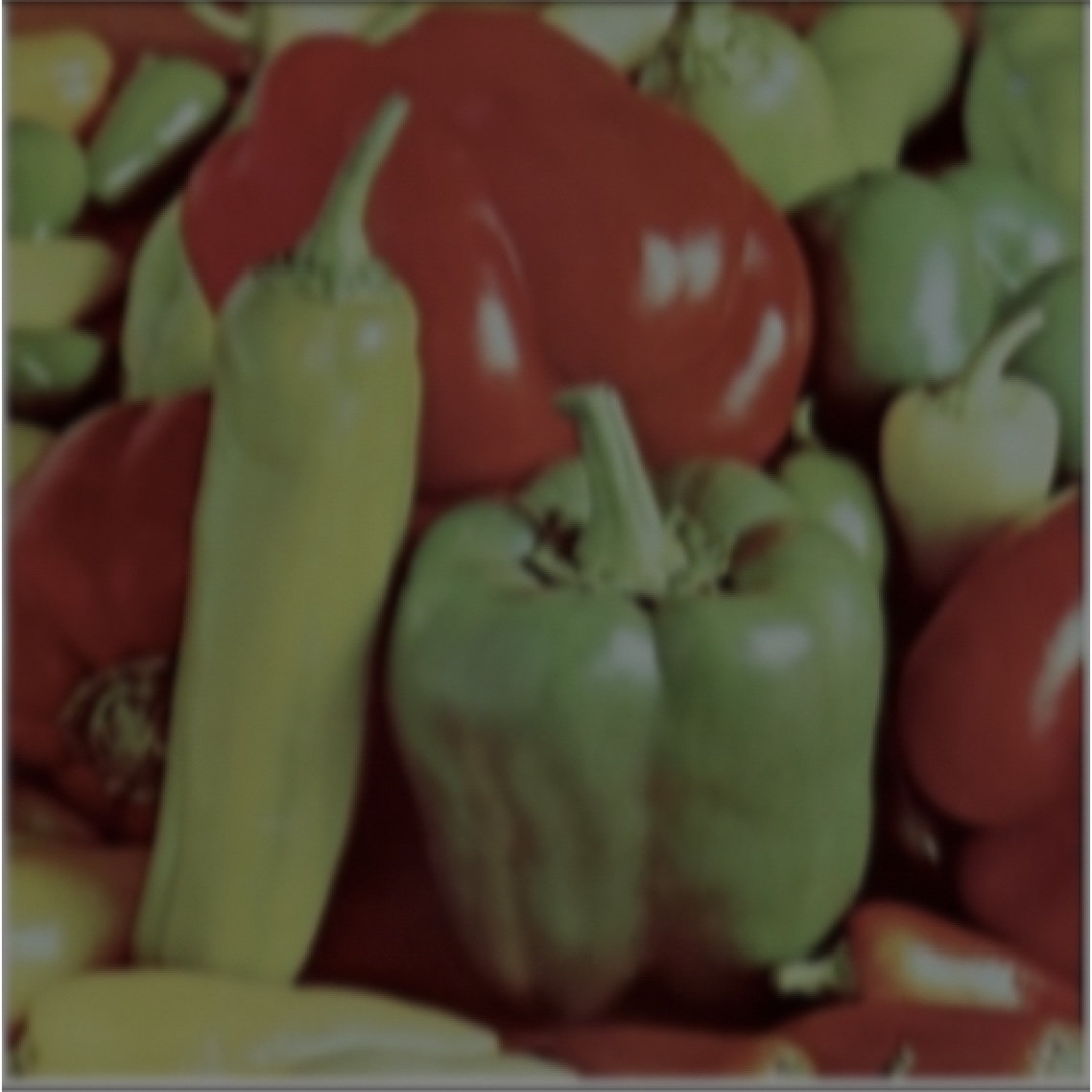}}
				\hspace{1cm}
	\subfigure[Debluurred Image]{\includegraphics[width=1.3in]{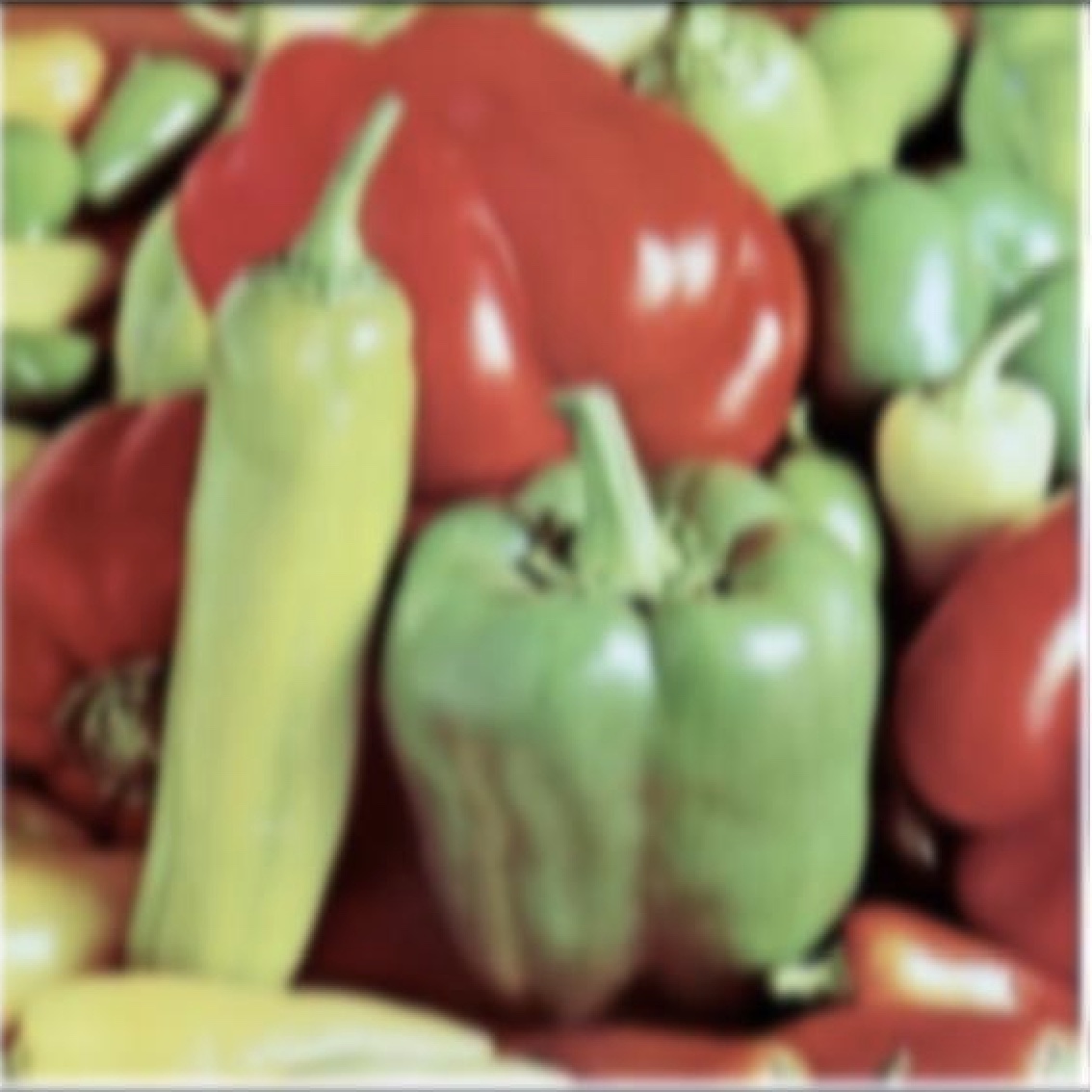}}
		\hspace{1cm}
	\subfigure[Debluurred Image in R-Dimension]{\includegraphics[width=1.3in]{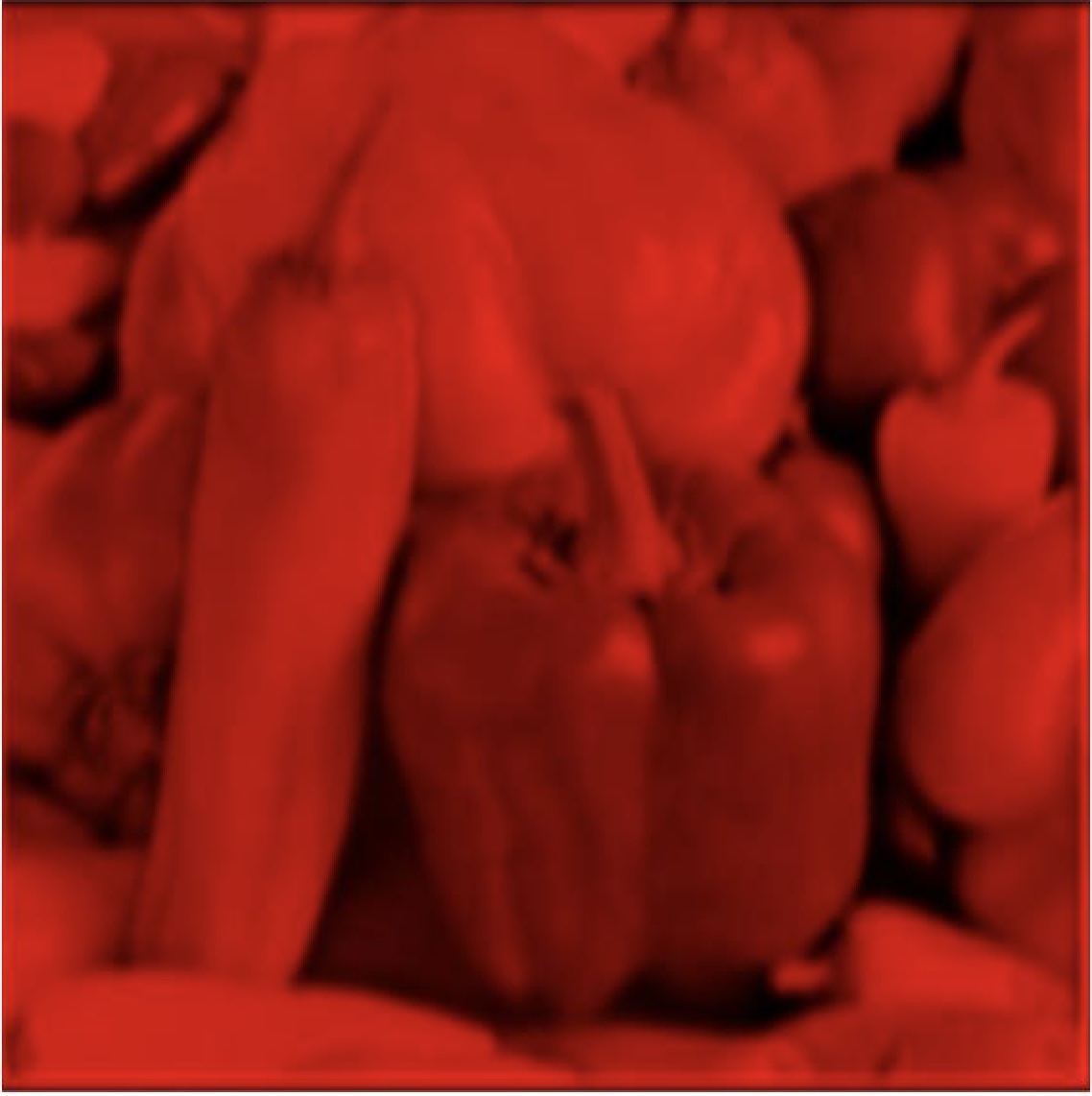}}
			\hspace{1cm}
	\subfigure[Debluurred Image  in G-Dimension]{\includegraphics[width=1.3in]{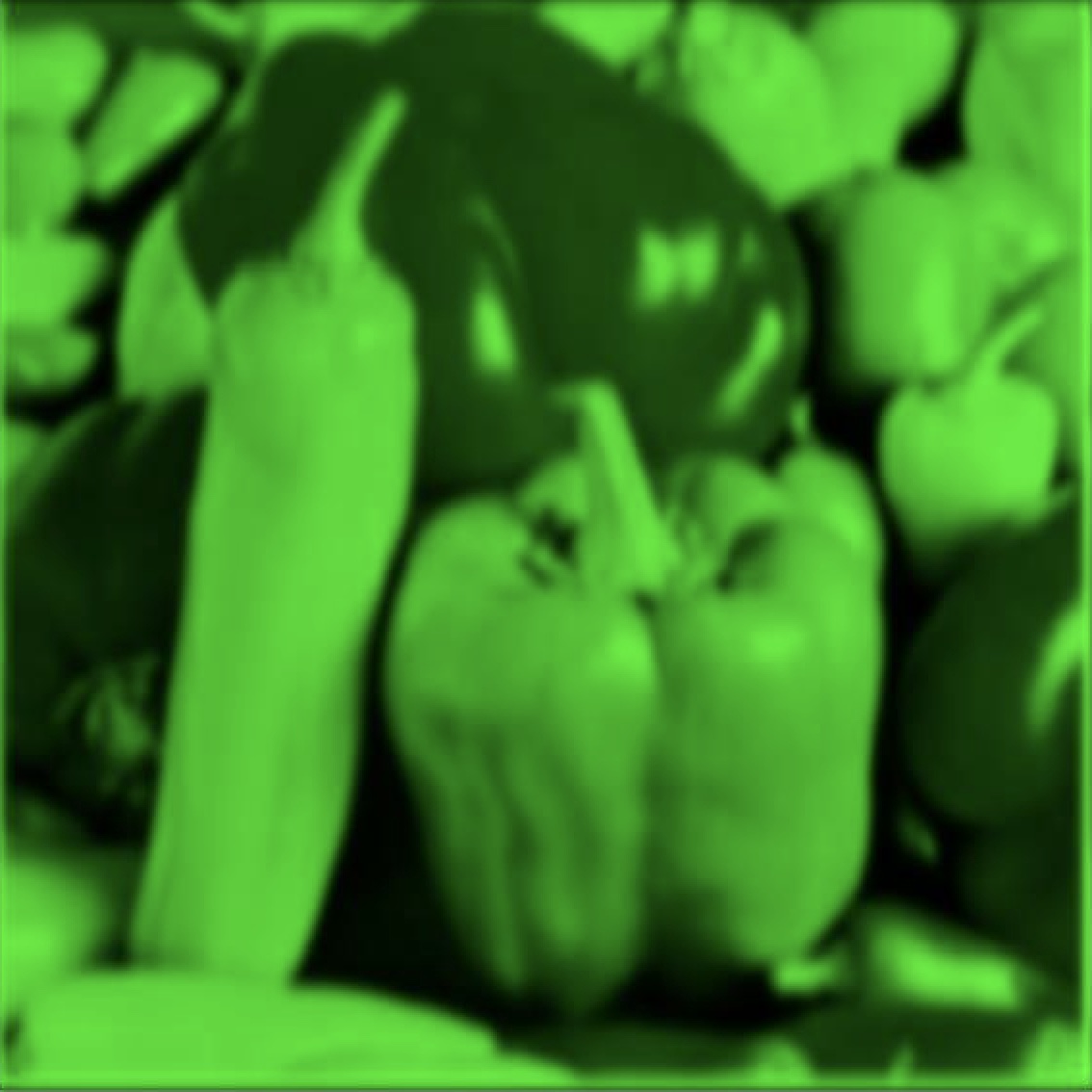}}
			\hspace{1cm}
		\subfigure[Debluurred Imgae  in B-Dimension]{\includegraphics[width=1.3in]{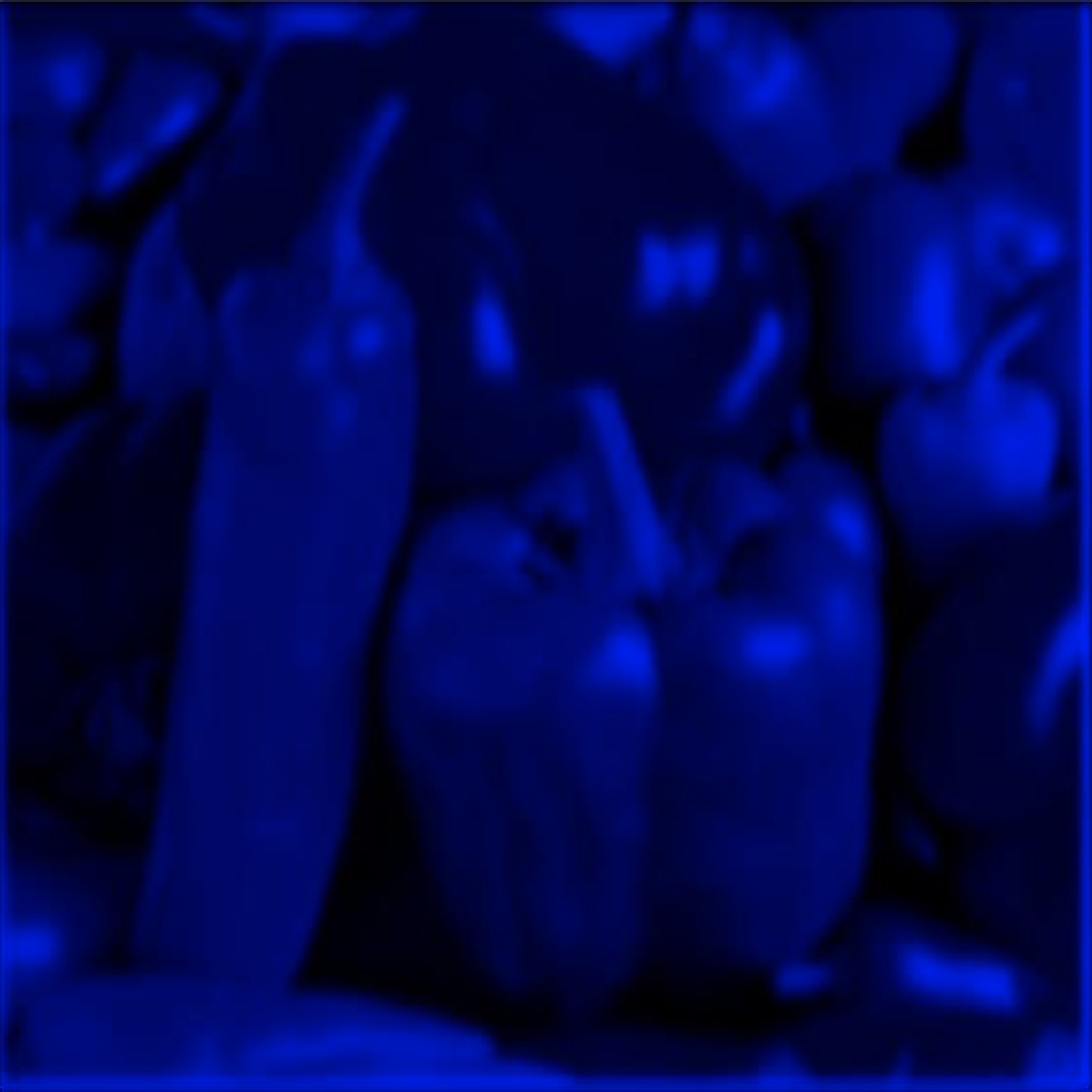}}

	\caption{Blurred and Deblurred results of Papper in RGB dimensions}
\end{figure}

\begin{figure}[htbp]

	\centering
	\subfigure[Original Image]{\includegraphics[width=1.3in]{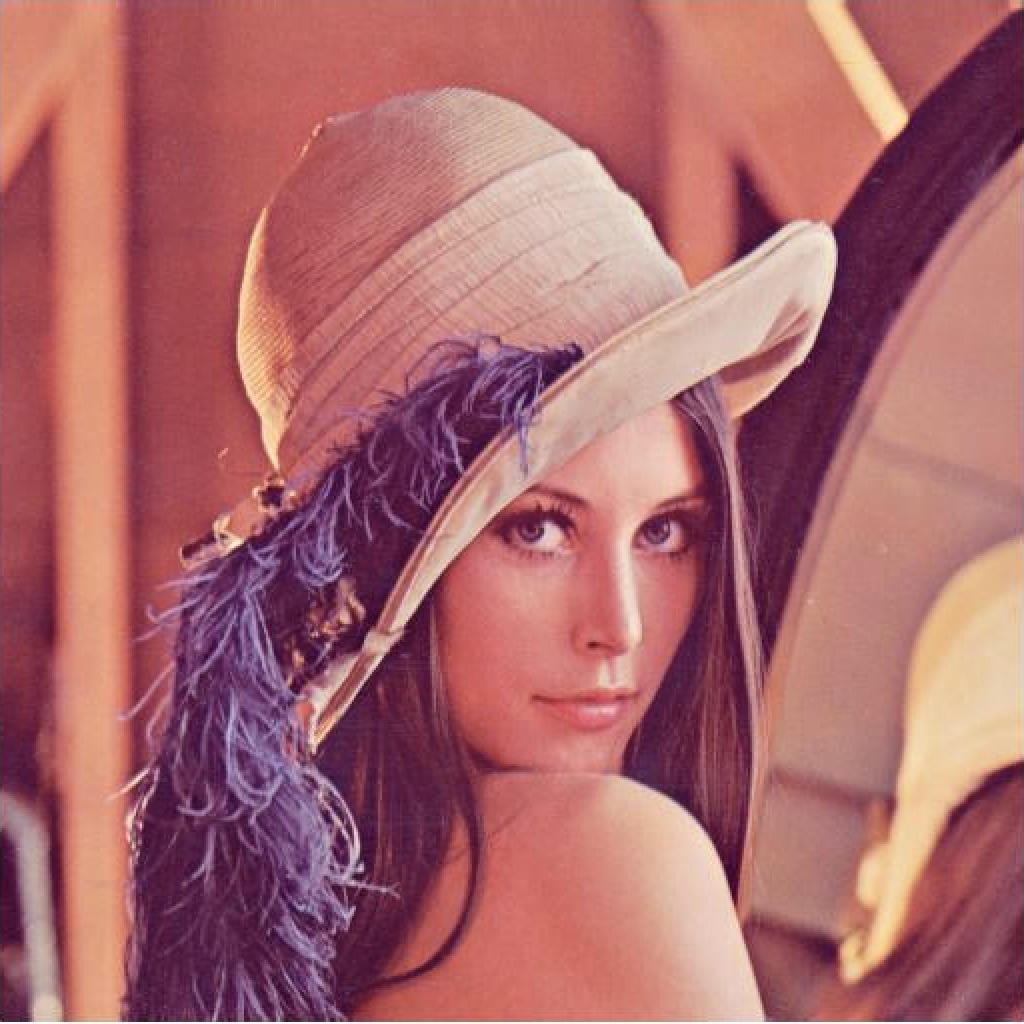}}
		\hspace{1cm}
	\subfigure[Blurred Imgae]{\includegraphics[width=1.3in]{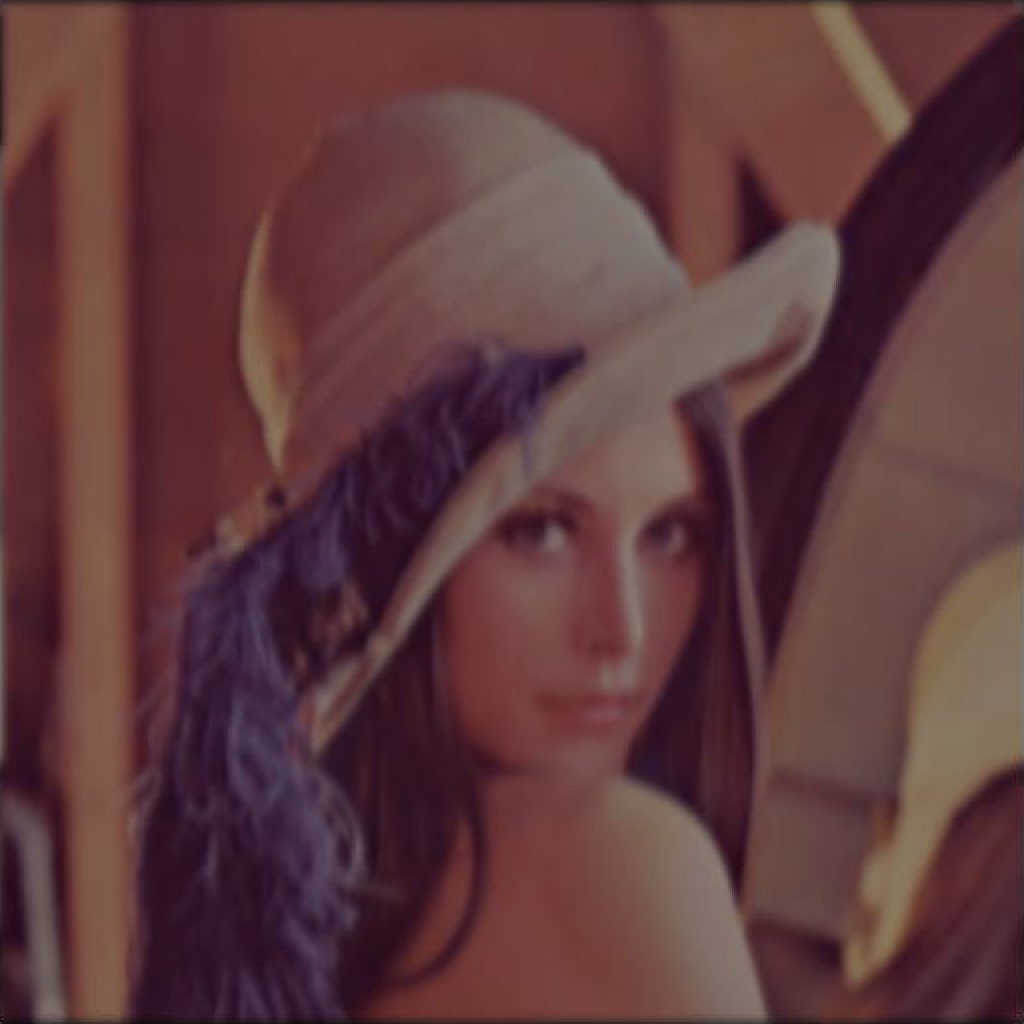}}
				\hspace{1cm}
	\subfigure[Debluurred Image]{\includegraphics[width=1.3in]{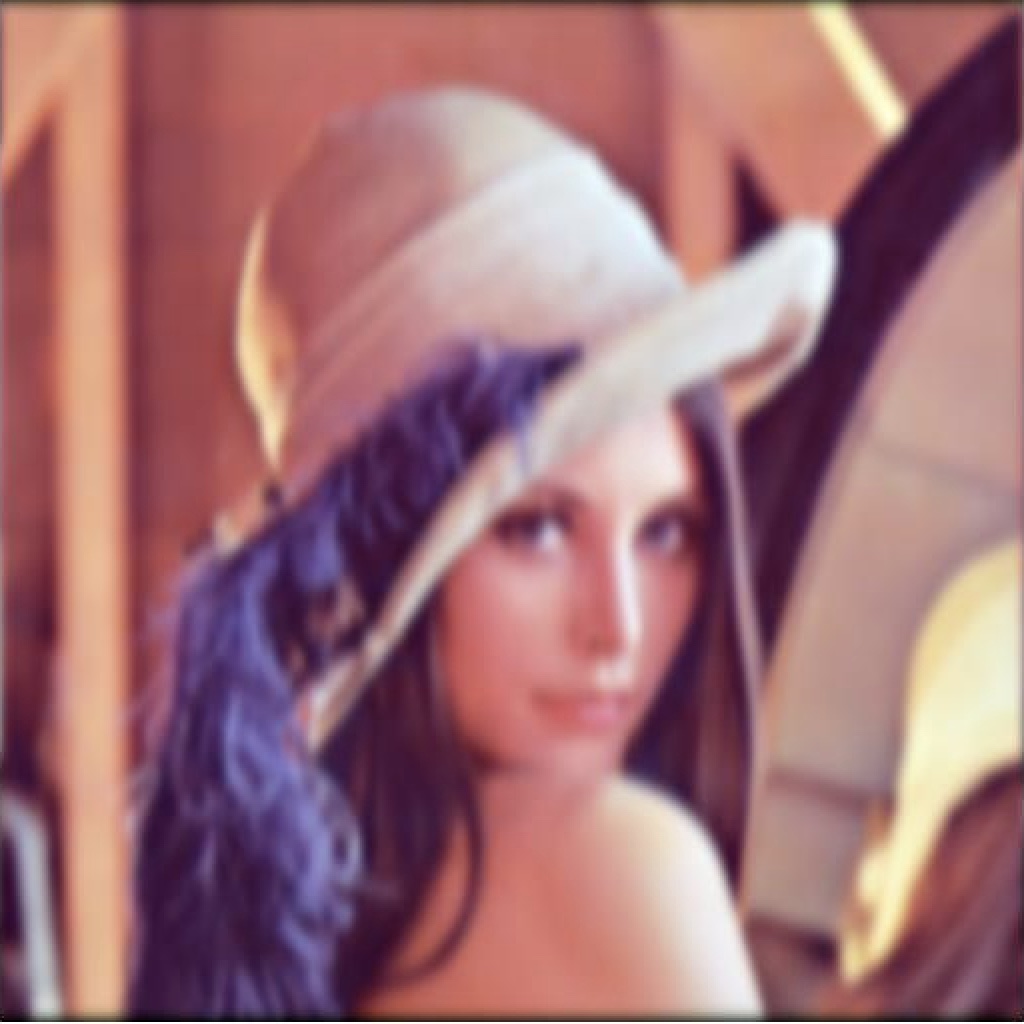}}
		\hspace{1cm}
	\subfigure[Debluurred Image in R-Dimension]{\includegraphics[width=1.3in]{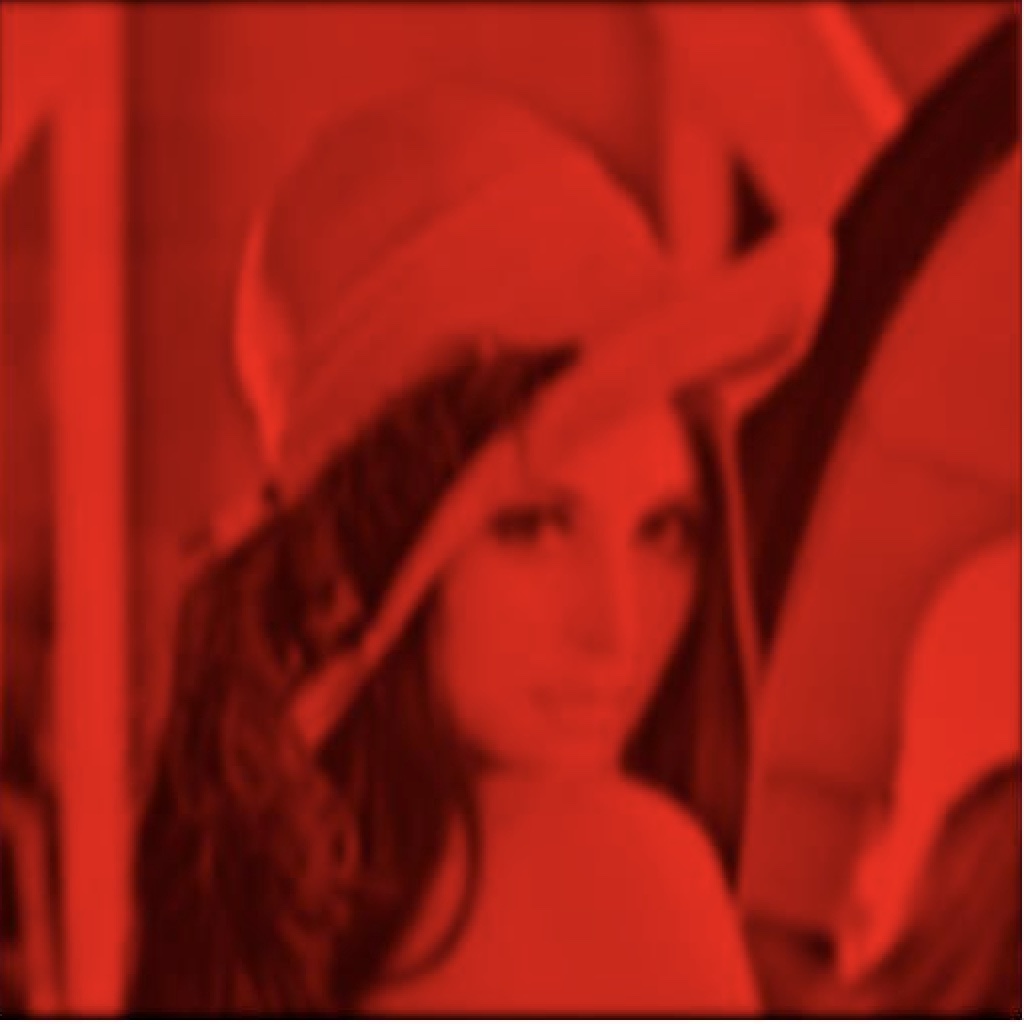}}
			\hspace{1cm}
	\subfigure[Debluurred Image  in G-Dimension]{\includegraphics[width=1.3in]{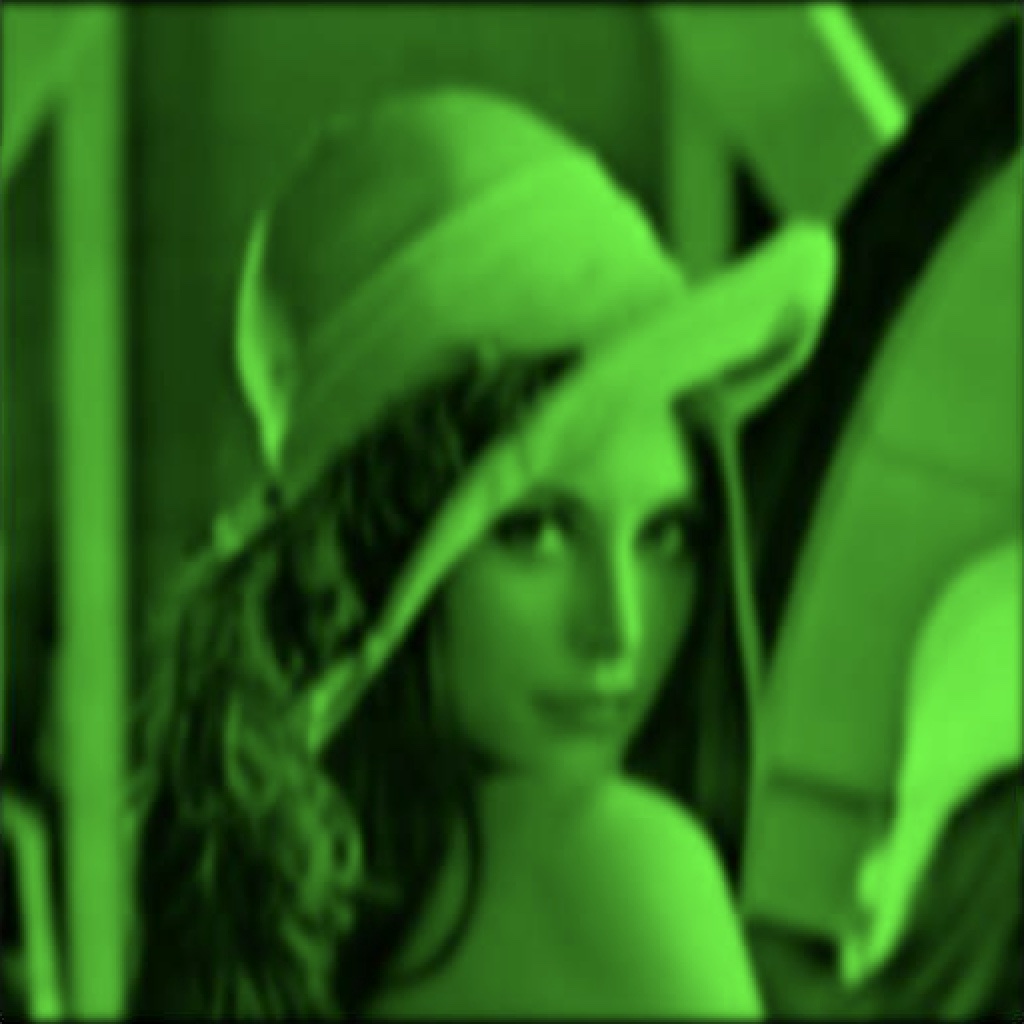}}
			\hspace{1cm}
		\subfigure[Debluurred Imgae  in B-Dimension]{\includegraphics[width=1.3in]{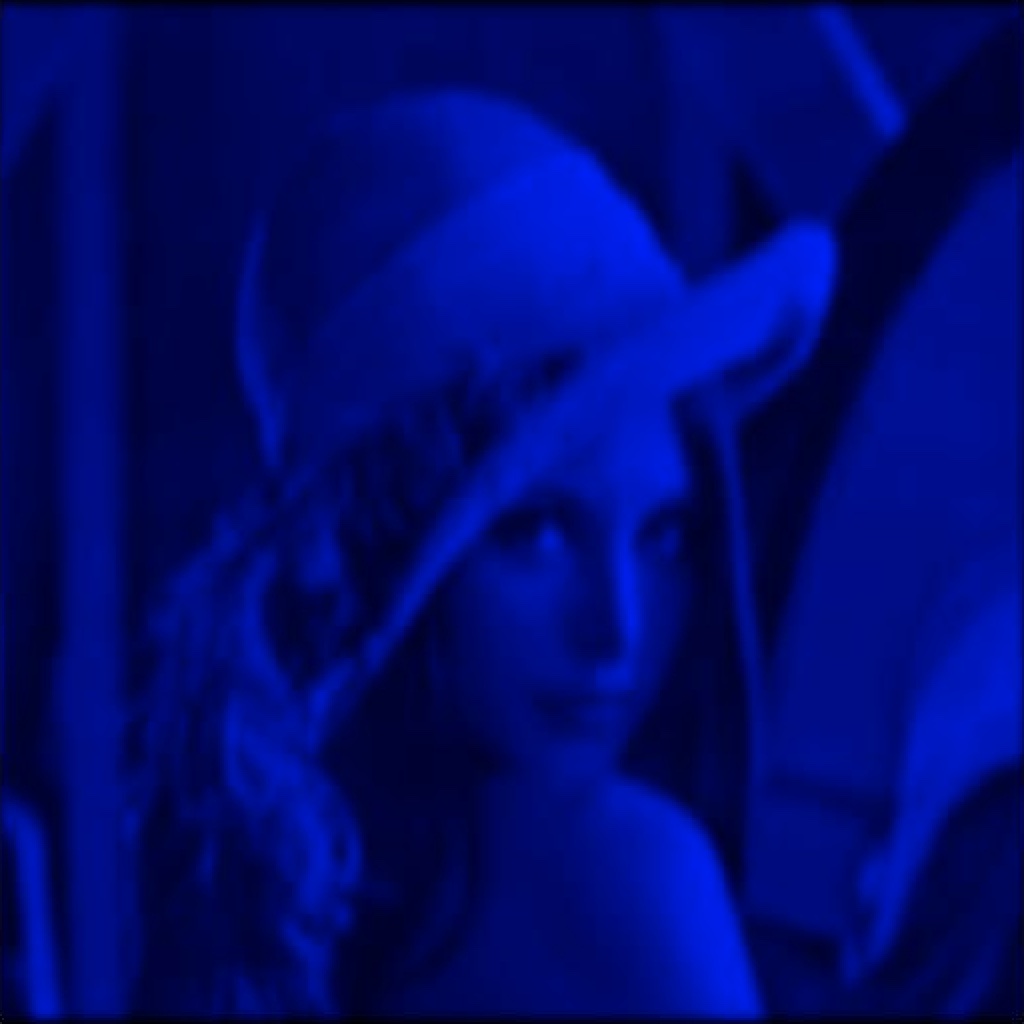}}

	\caption{Blurred and Deblurred results of Lena in RGB dimensions}
\end{figure}

\subsubsection{Videos}
Video data deblurring adopts the same strategy as color image deblurring. The only difference is the number of lateral layers on the model. Gray video data is stored in $\mathcal X_{\rm video}\in\mathbb{R}^{n\times s\times p}$, while image data is stored in $\mathcal X_{\rm image}\in\mathbb{R}^{n\times 1\times p}$ or $\mathcal X_{\rm image}\in\mathbb{R}^{n\times 3\times p}$, where $s$ could be any positive integer. Meanwhile, color video deblurring problems can be solved by the TR-TLS based on the T-product in higher-order cases \cite{martin2013order}, which repeats the TR-TLS  in single lateral slices, too.

\begin{figure}[htbp]
	\centering
	\subfigure{\includegraphics[width=0.71in]{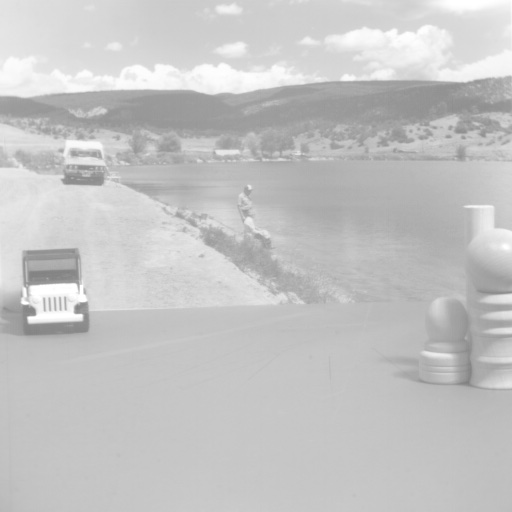}}
		\hspace{0.1cm}
	\subfigure{\includegraphics[width=0.71in]{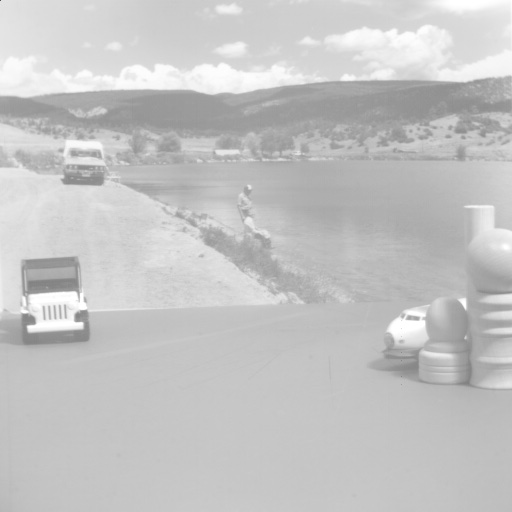}}
			\hspace{0.1cm}
	\subfigure{\includegraphics[width=0.71in]{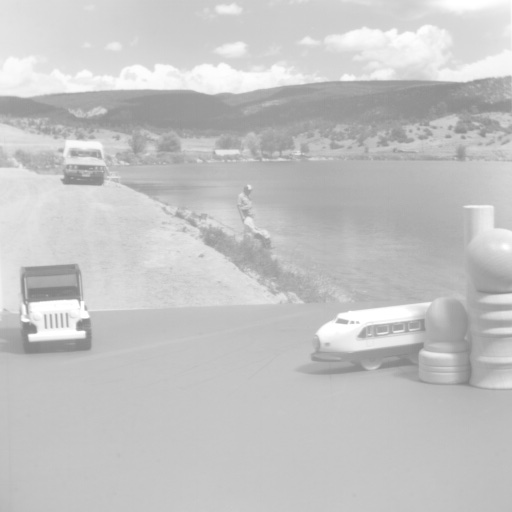}}
			\hspace{0.1cm}
	\subfigure{\includegraphics[width=0.71in]{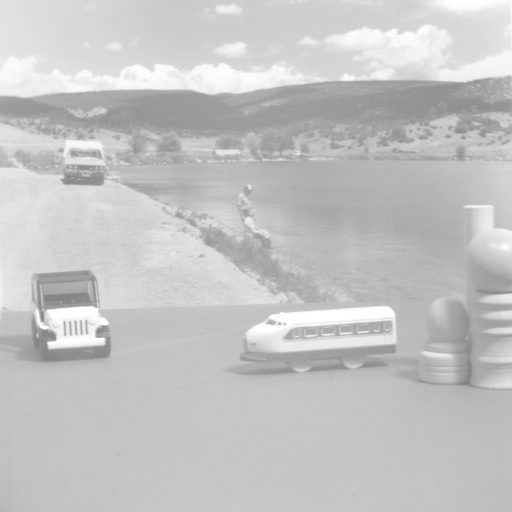}}
			\hspace{0.1cm}
	\subfigure{\includegraphics[width=0.71in]{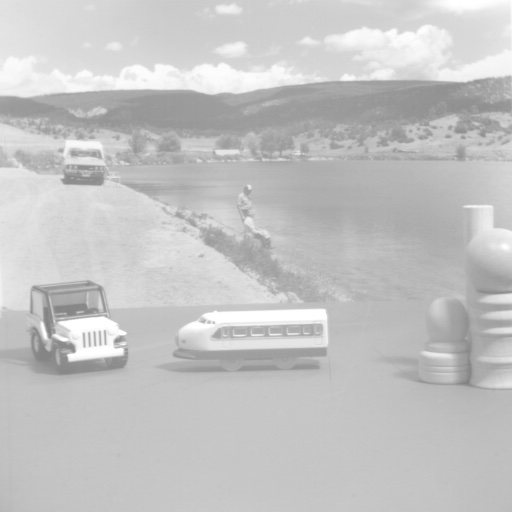}}
			\hspace{0.1cm}\\
			
	\subfigure{\includegraphics[width=0.71in]{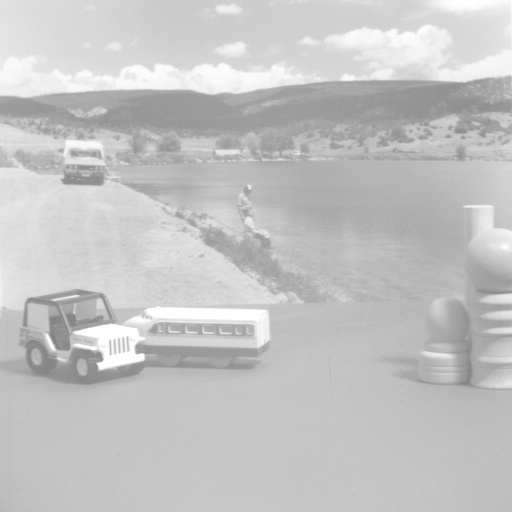}}
			\hspace{0.1cm}
	\subfigure{\includegraphics[width=0.71in]{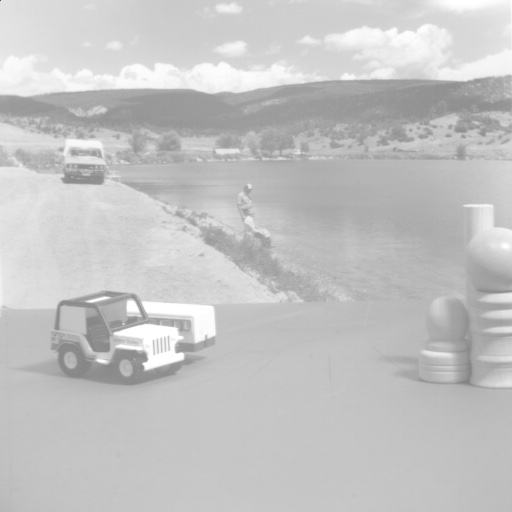}}
			\hspace{0.1cm}
	\subfigure{\includegraphics[width=0.71in]{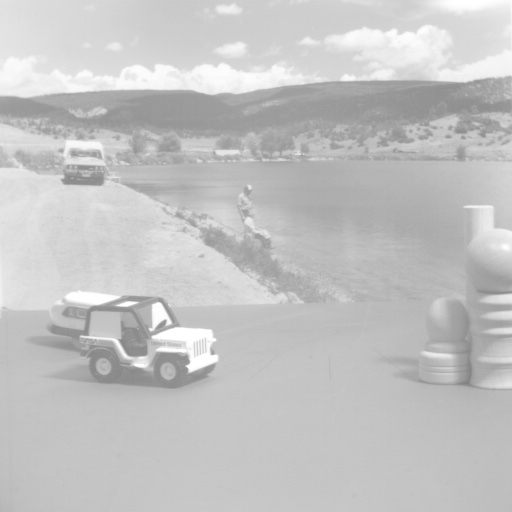}}
			\hspace{0.1cm}
	\subfigure{\includegraphics[width=0.71in]{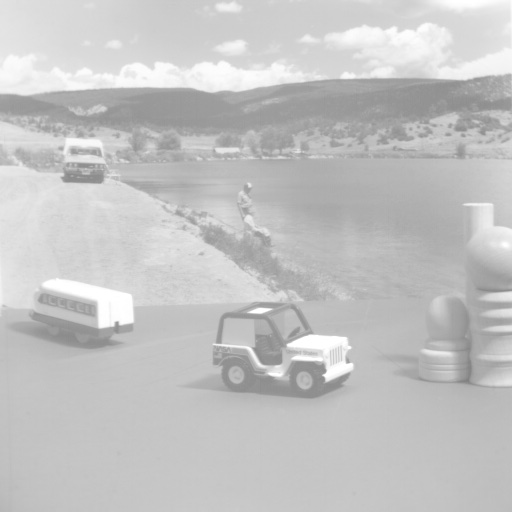}}
			\hspace{0.1cm}
	\subfigure{\includegraphics[width=0.71in]{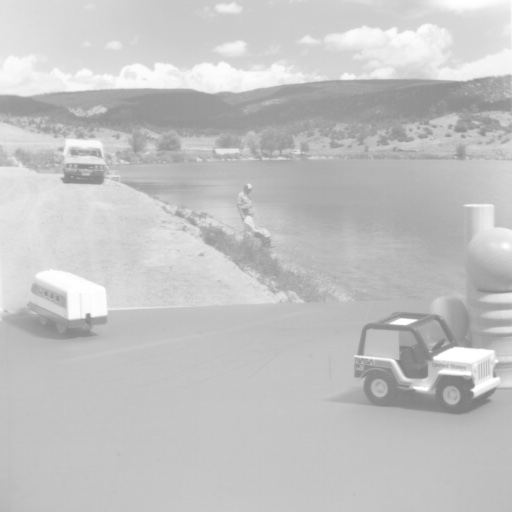}}
			\hspace{0.1cm}\\
\ 
	\subfigure{\includegraphics[width=0.71in]{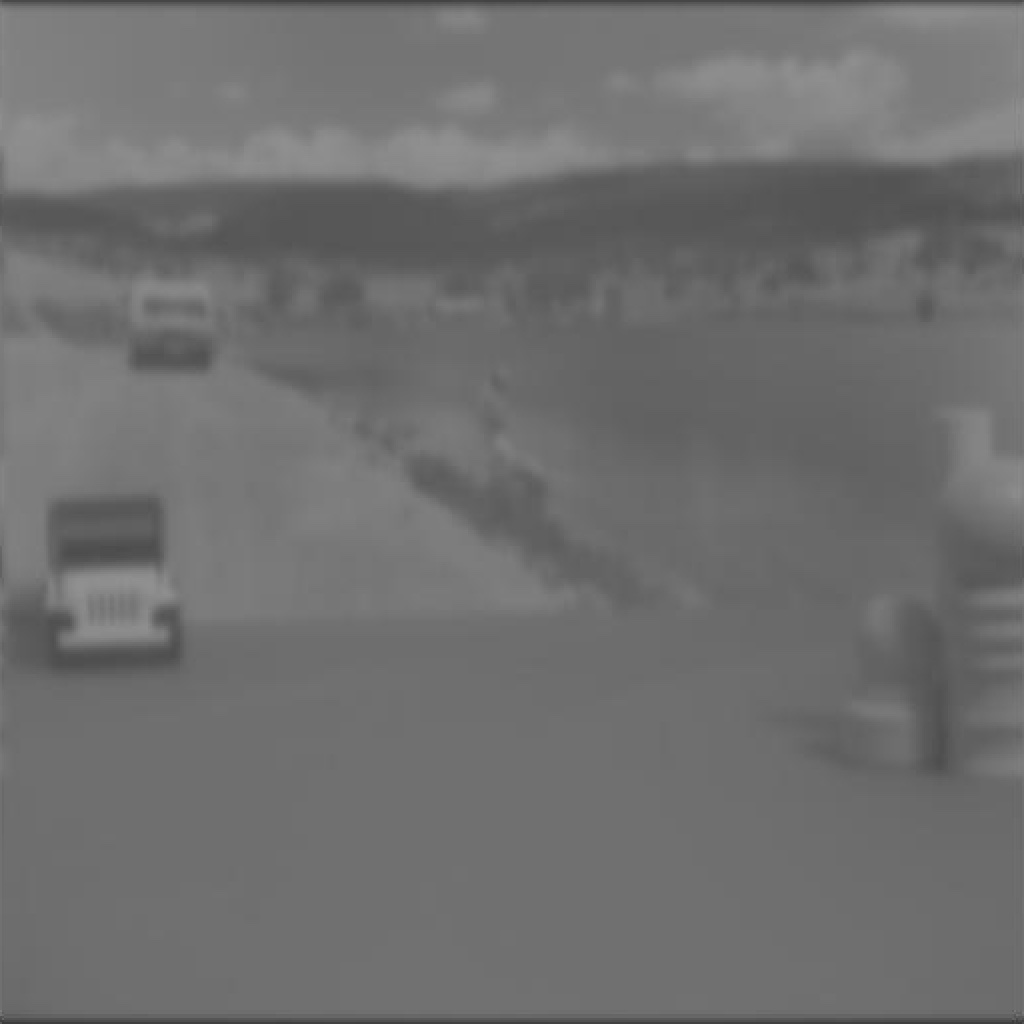}}
		\hspace{0.1cm}
	\subfigure{\includegraphics[width=0.71in]{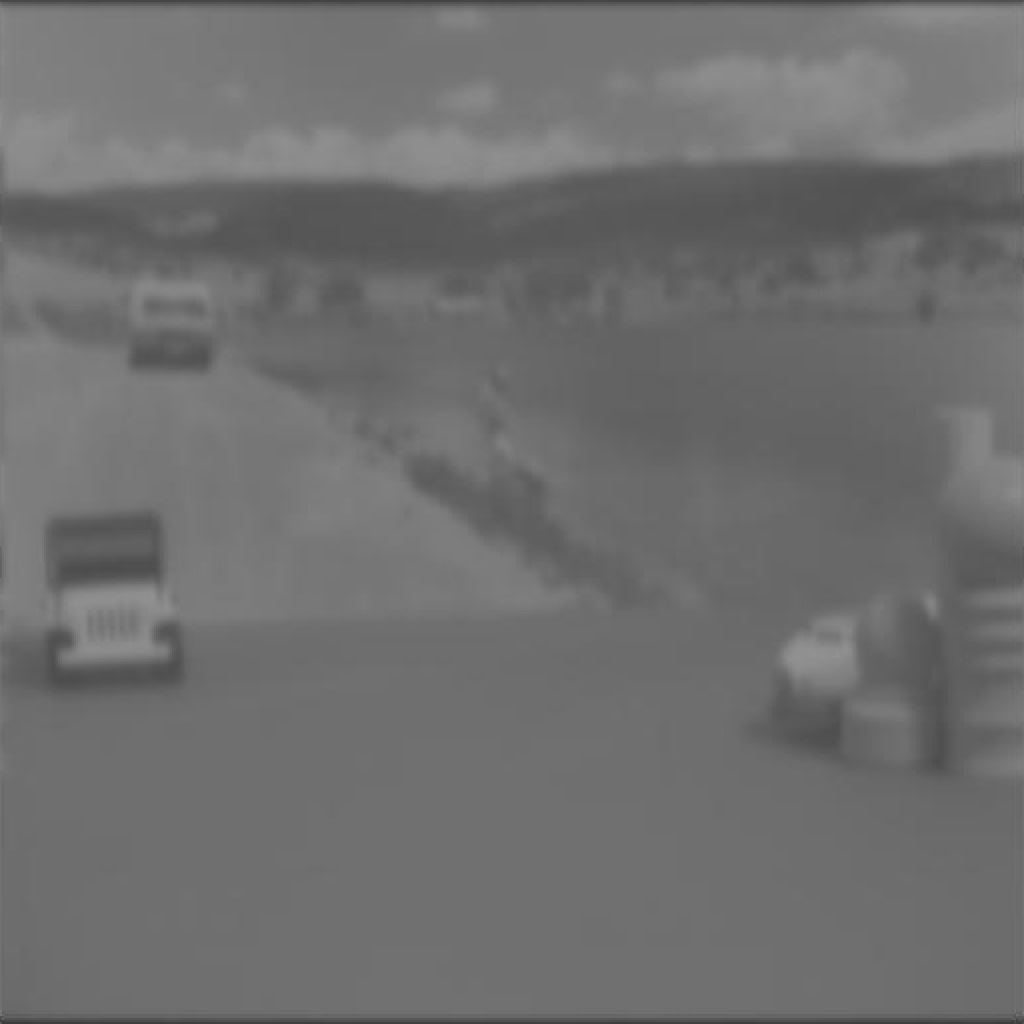}}
			\hspace{0.1cm}
	\subfigure{\includegraphics[width=0.71in]{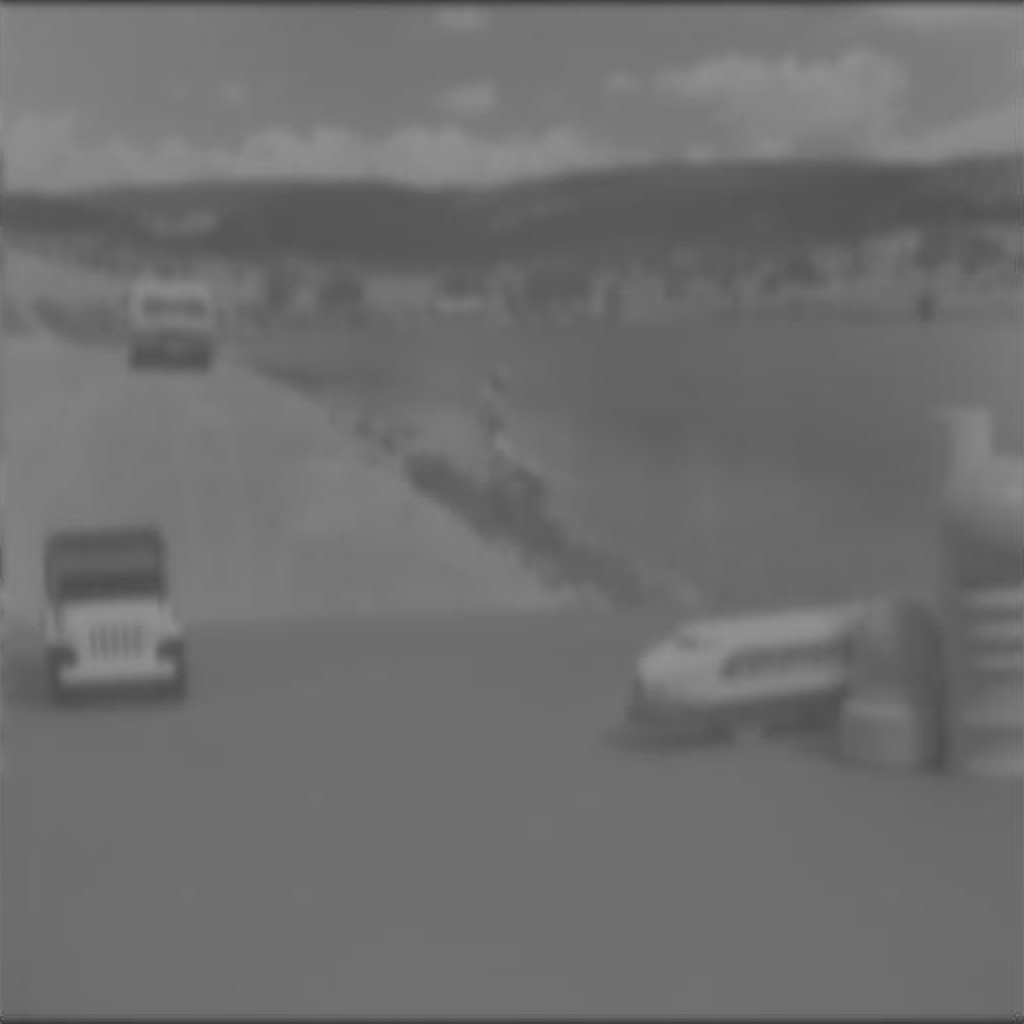}}
			\hspace{0.1cm}
	\subfigure{\includegraphics[width=0.71in]{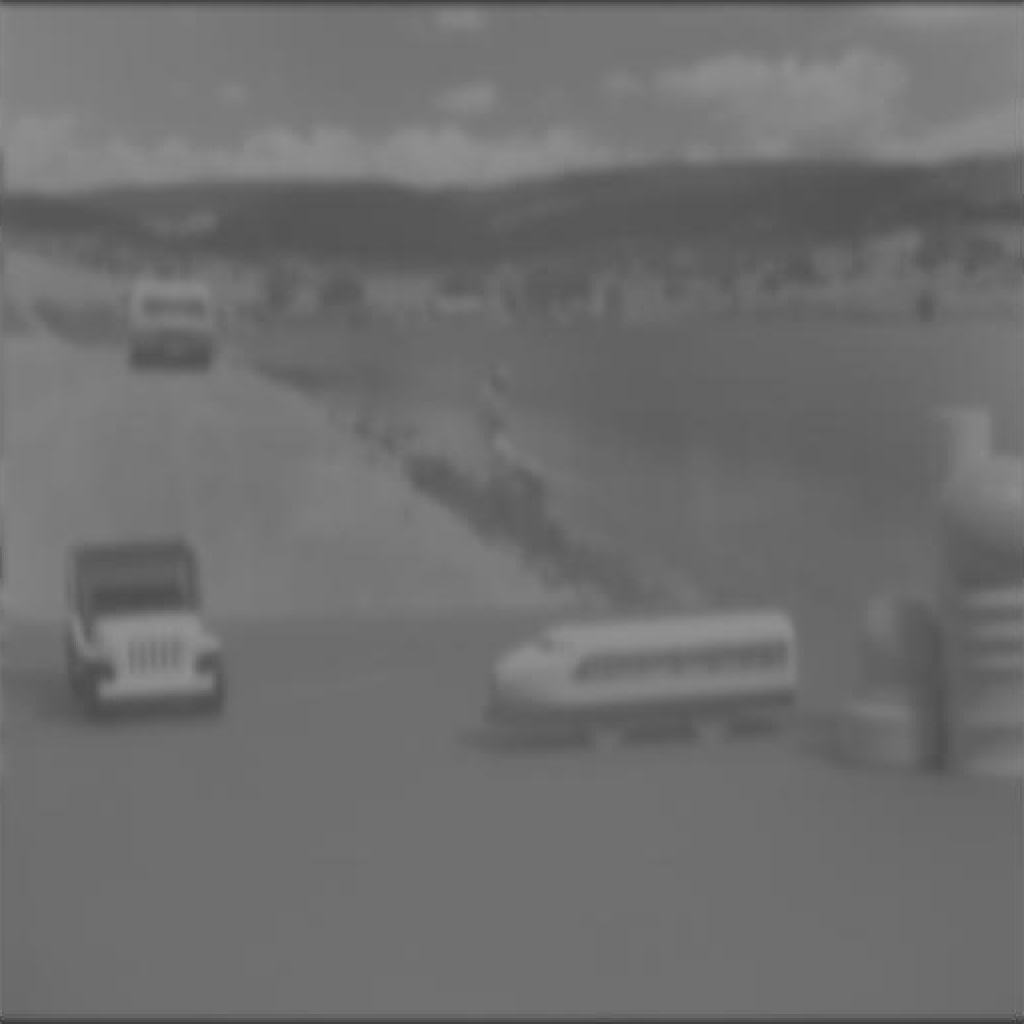}}
			\hspace{0.1cm}
	\subfigure{\includegraphics[width=0.71in]{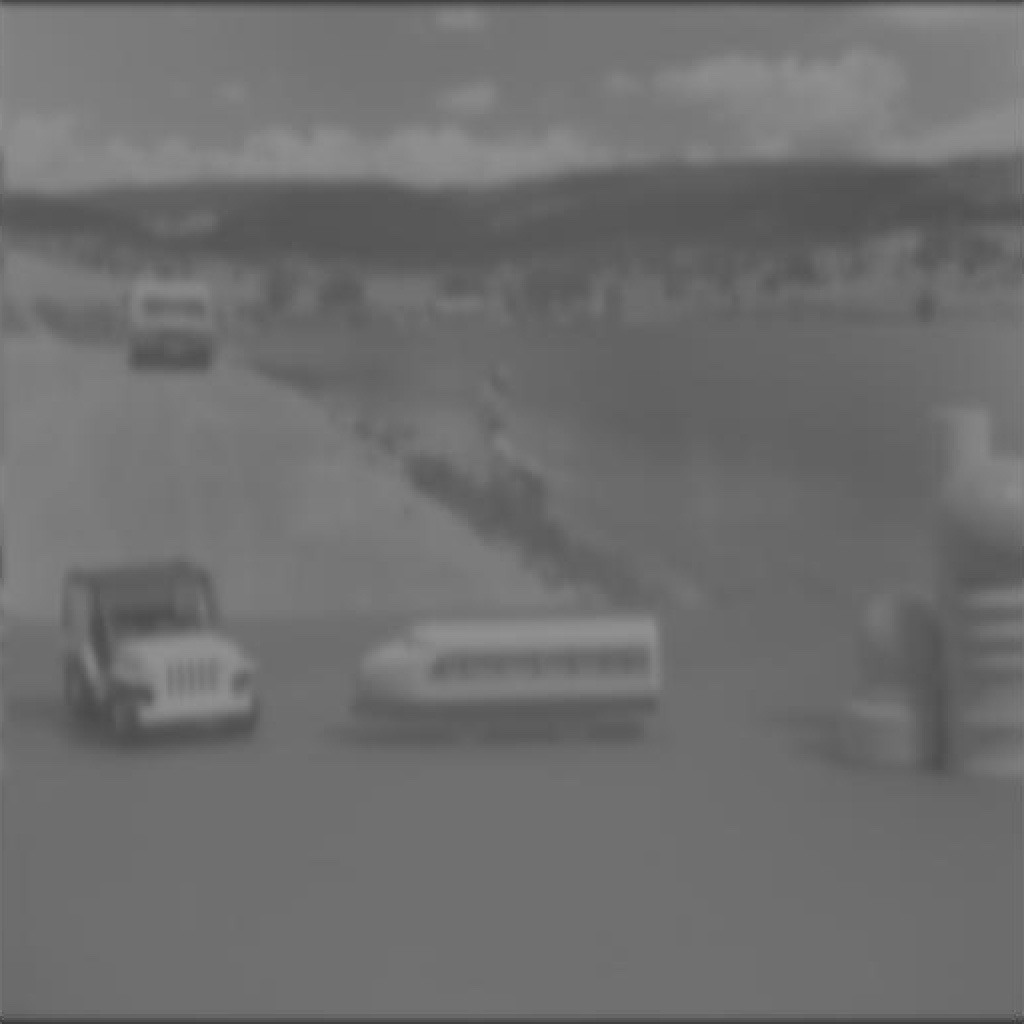}}
			\hspace{0.1cm}
						\\

	\subfigure{\includegraphics[width=0.71in]{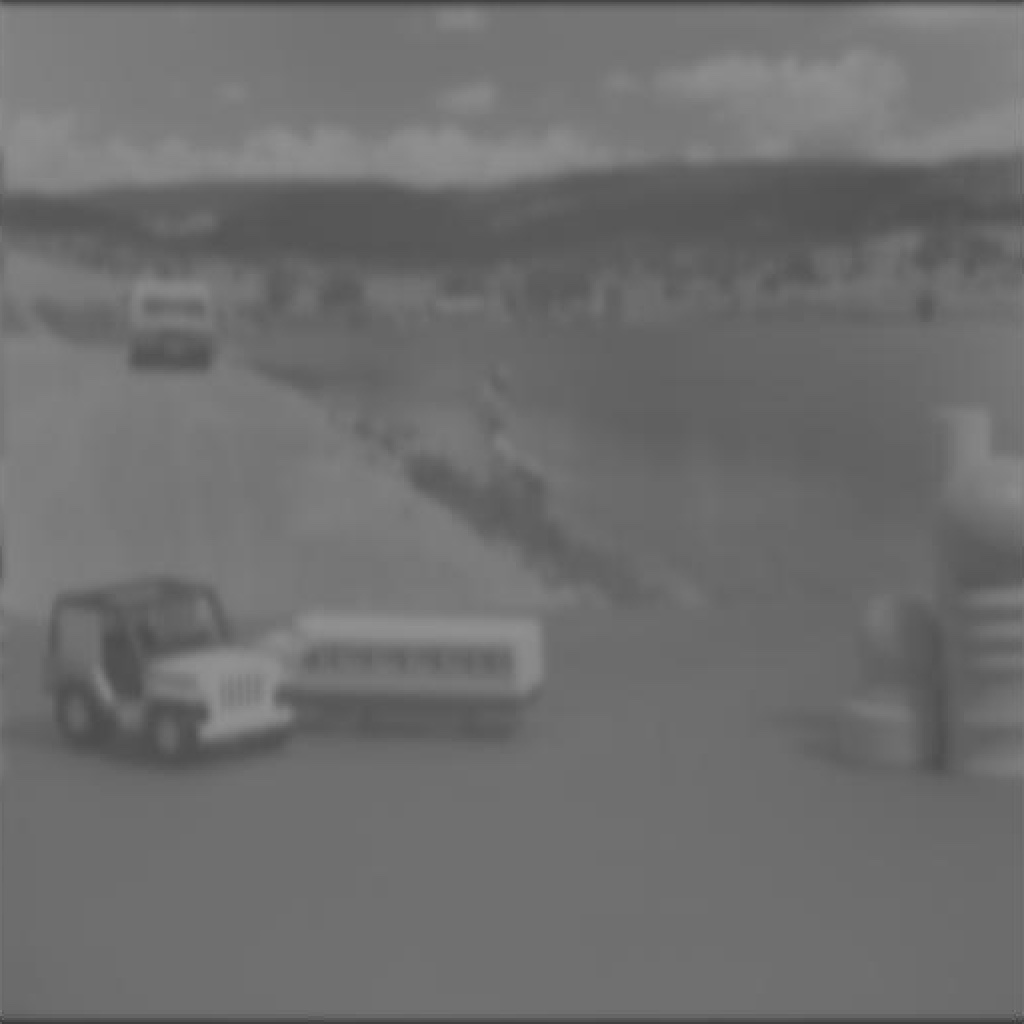}}
			\hspace{0.1cm}
	\subfigure{\includegraphics[width=0.71in]{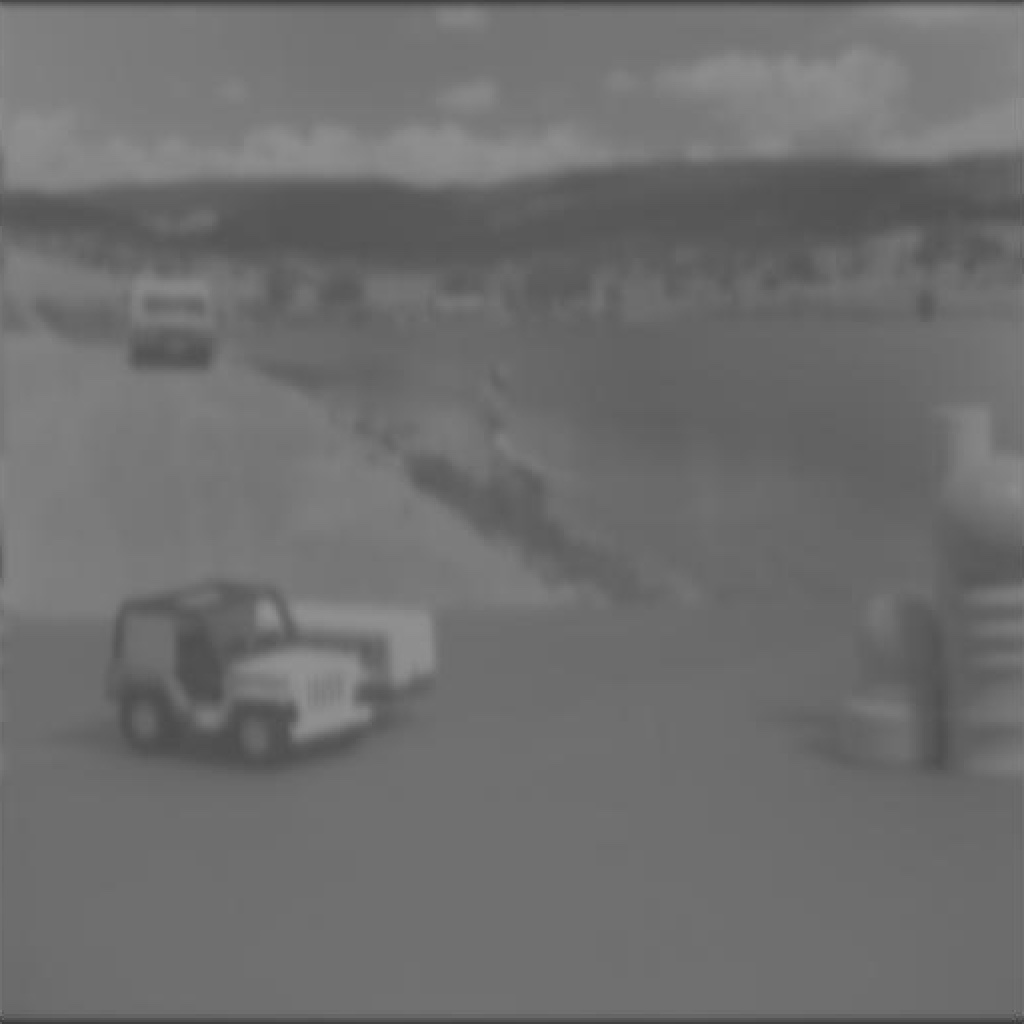}}
			\hspace{0.1cm}
	\subfigure{\includegraphics[width=0.71in]{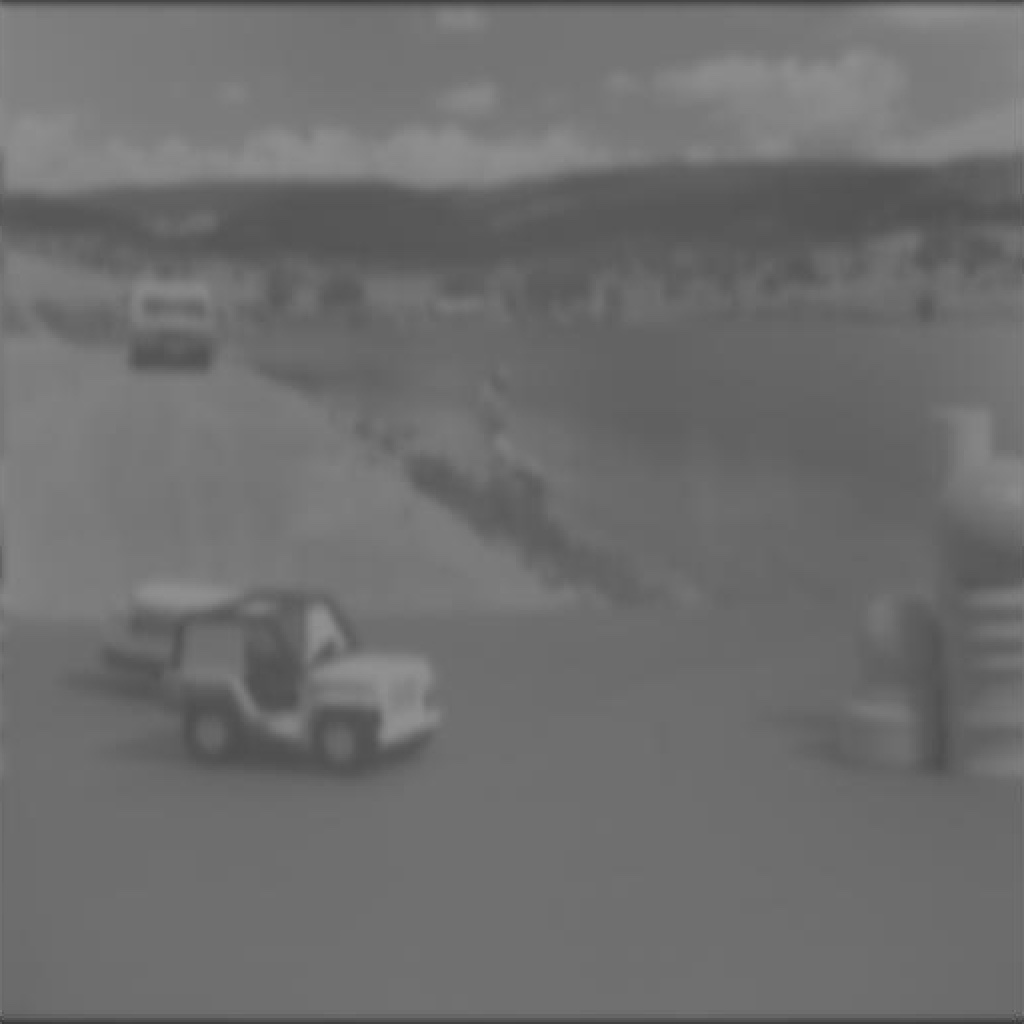}}
			\hspace{0.1cm}
	\subfigure{\includegraphics[width=0.71in]{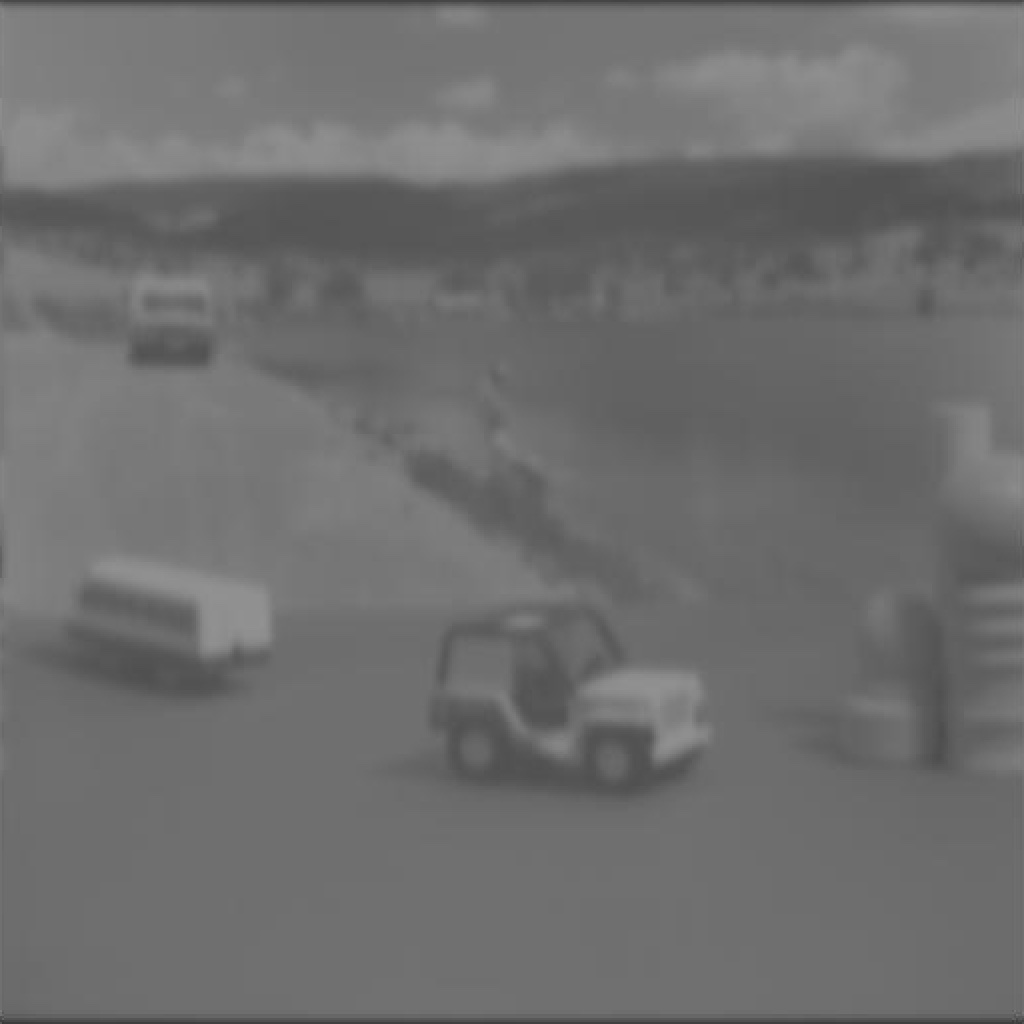}}
			\hspace{0.1cm}
			\subfigure{\includegraphics[width=0.71in]{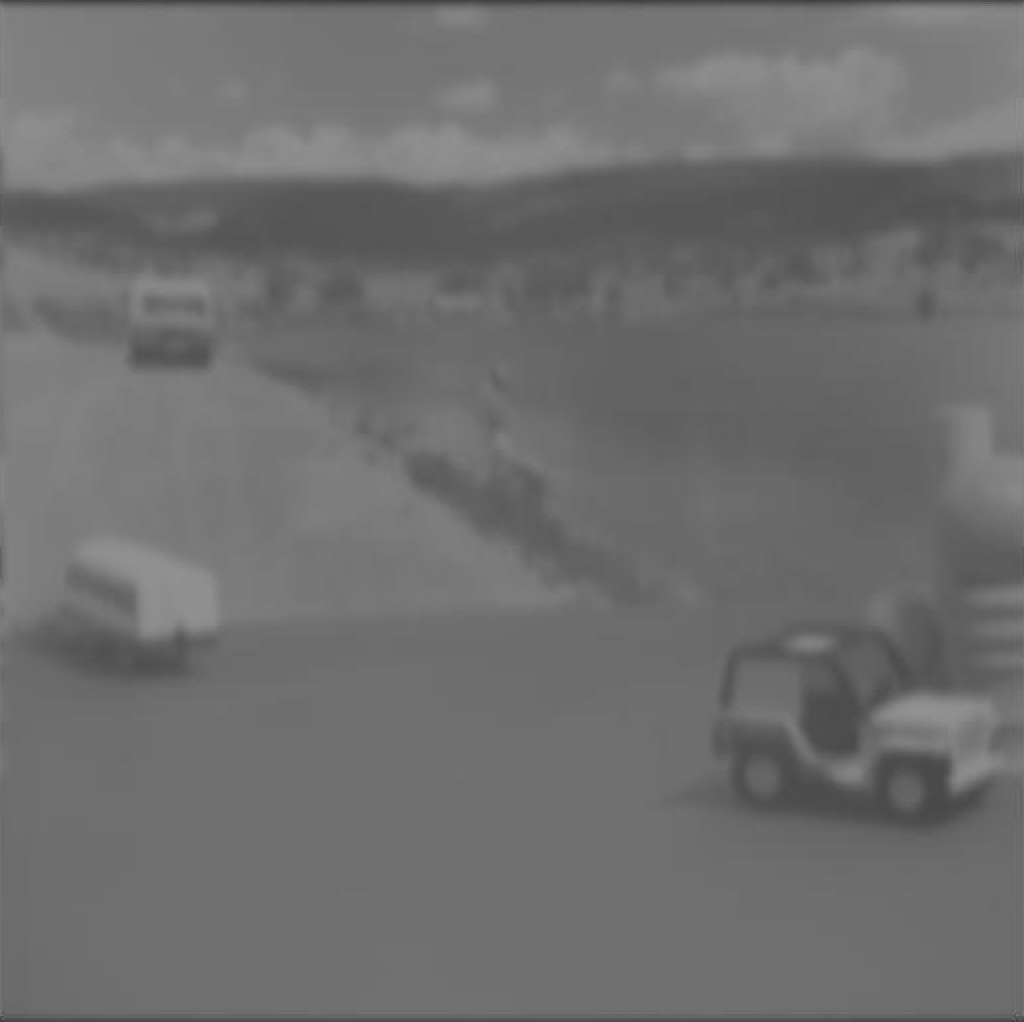}}
						\hspace{0.1cm}\\
\ 
	\subfigure{\includegraphics[width=0.71in]{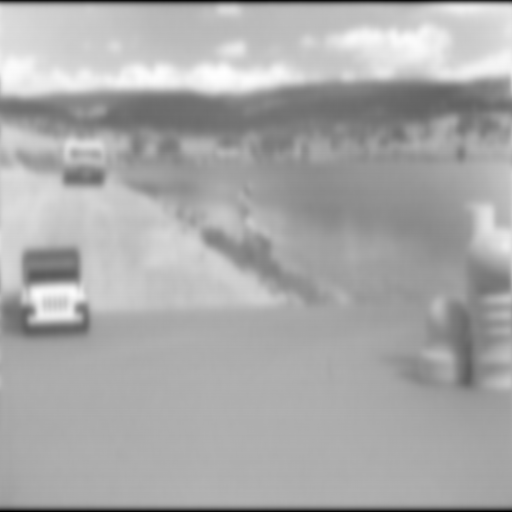}}
		\hspace{0.1cm}
	\subfigure{\includegraphics[width=0.71in]{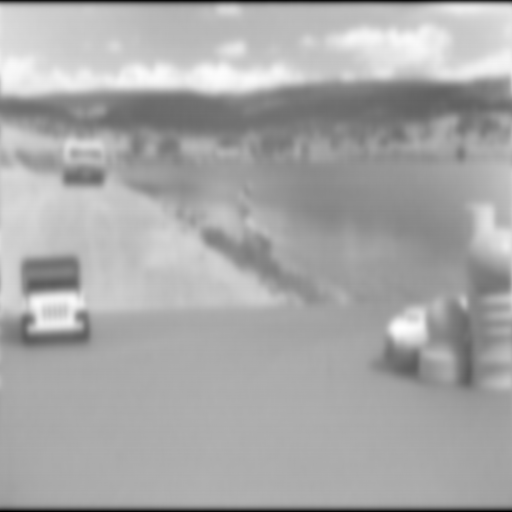}}
			\hspace{0.1cm}
	\subfigure{\includegraphics[width=0.71in]{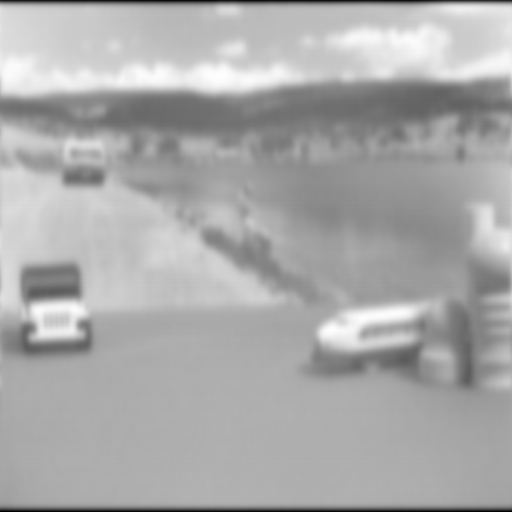}}
			\hspace{0.1cm}
	\subfigure{\includegraphics[width=0.71in]{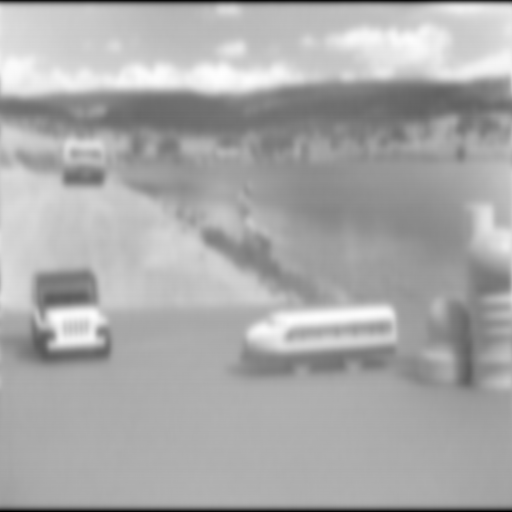}}
			\hspace{0.1cm}
	\subfigure{\includegraphics[width=0.71in]{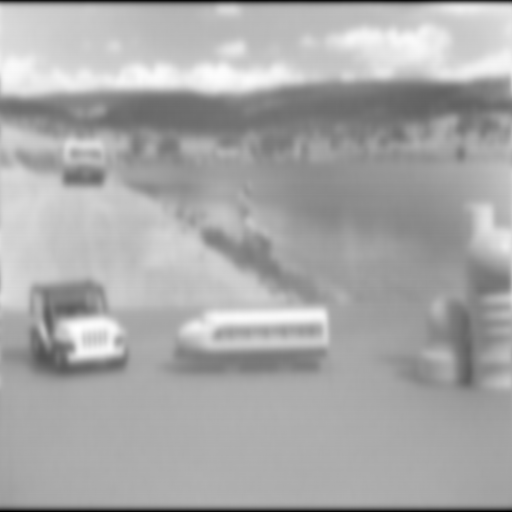}}
			\hspace{0.1cm}
			\\
\ \ 
	\subfigure{\includegraphics[width=0.71in]{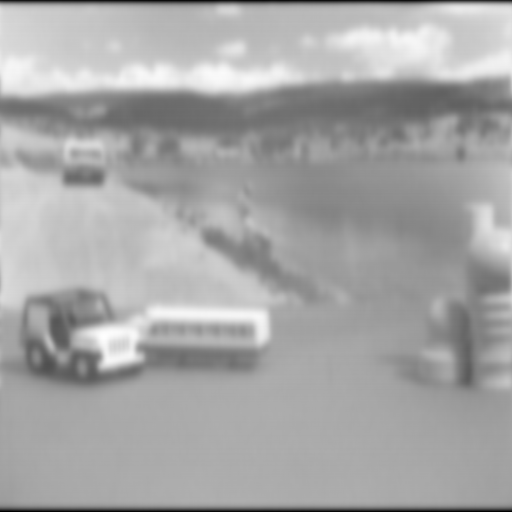}}
			\hspace{0.1cm}
	\subfigure{\includegraphics[width=0.71in]{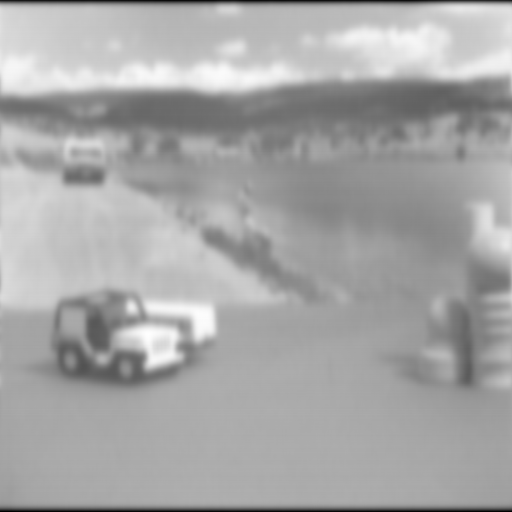}}
			\hspace{0.1cm}
	\subfigure{\includegraphics[width=0.71in]{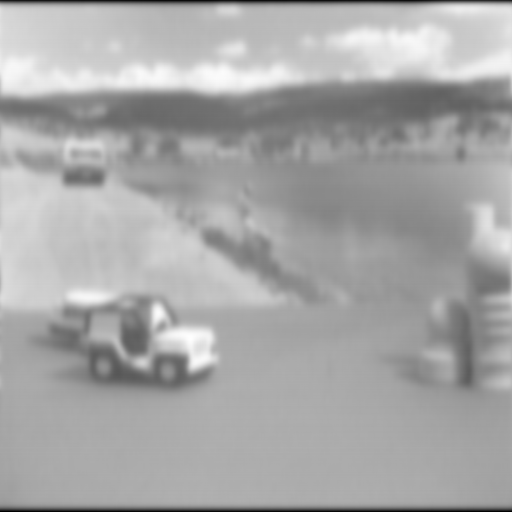}}
			\hspace{0.1cm}
	\subfigure{\includegraphics[width=0.71in]{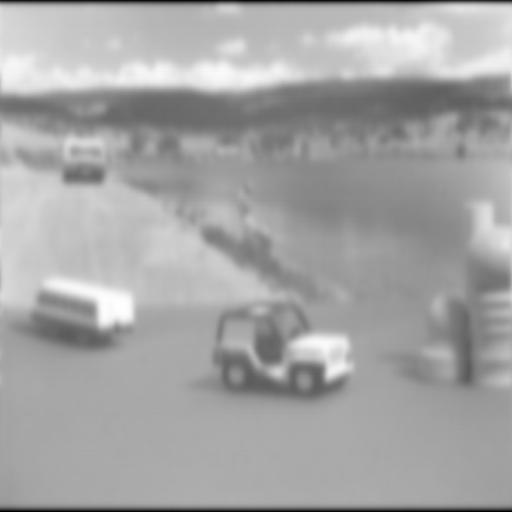}}
			\hspace{0.1cm}
	\subfigure{\includegraphics[width=0.71in]{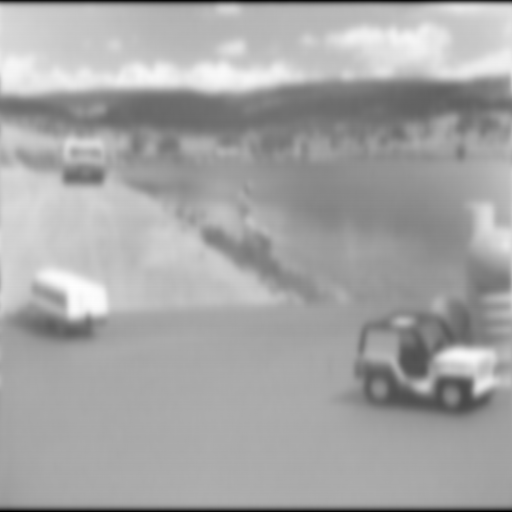}}
			\hspace{0.1cm}
	\caption{Original, Blurred and Deblurred Videos}
\end{figure}

Mean Square Error (MSE) is a common evaluation index in the field of image processing, 
\begin{equation}
{\rm MSE}(\mathcal X_1,\mathcal X_2)=\frac{\|\mathcal X_1-\mathcal X_2\|_F^2}{mnp},
\end{equation}
where $\mathcal X_1,\mathcal X_2\in \mathbb{R}^{m\times n\times p}$. MSE values before and after deblurring by the TR-TLS are compared with those of the original images as shown in Table \ref{MSE}.
\begin{table}[htbp]
\centering
\caption{\rm {MSE} Results of Deblurring by TR-TLS Method }\label{MSE}
\begin{tabular}{ccccc}
\toprule
\textbf{Data Category} & \textbf{Data Name} & \textbf{Blurred MSE} & \textbf{Deblurred MSE}  & \textbf{Restoring Proportion}\\
\hline
Gray Image     &     City  &     0.0596   &    0.0056   & 90.6\%\\

 Gray Image     &      Artificial Satellite              & 0.0525 & 0.0064 &87.8\%\\
 
  Color Image   &        Pepper            &   0.0566 &   0.0057 &89.9\%\\
  Color Image & Lena  & 0.0573& 0.0061&89.3\%\\
Video & Video & 0.1143  & 0.0121& 89.4\%\\
  \bottomrule
\end{tabular}
\end{table}

From all of these results, we can see that the TR-TLS method does make the effect of image and video deblurring. The deblurring operation by the TR-TLS can reduce the MSE index by about 90\%.
\newpage
\subsection{Comparisons between TR-TLS and Existing Methods}

In this subsection, we will take the experimental numerical data of color image Pepper as an example to show the advantages of the TR-TLS algorithm in terms of time cost and the MSE. In following figures, the horizontal axis is the MSE and the vertical axis is the CPU time of the program running.

We compare the TR-TLS with existing TTSVD \cite{beik2021tensor, fierro1997regularization}, RTTSVD \cite{xie2019randomized} and  TGGKB, TGGMRES \cite{reichel2022tensor} methods. Among them, 
the TTSVD is the abbreviation for truncated tensor SVD, while RTTSVD is generated from TTSVD, improved by randomized algorithms. Besides, both tensor global Golub-Kahan bidiagonalization algorithm (TGGKB) and generalized minimum residual method with tensor generalizations  (TGGMRES) are iterative algorithms to solve the TR-TLS problem, considering it as a tensor regularized LS problem and ignoring the errors in mapping tensor $\mathcal A$. For convenience, ``TGGKB\_5" is utilized to mean the experiment results after $5$ iteration steps, using the TGGKB algorithm.

 As the following two images show, the closer the data points are to the plane rectangular coordinate system, the lower the MSE is achieved at a smaller time cost in the numerical examples. The TR-TLS method has obvious advantages, which proves the correctness and effectiveness of our theory.

\begin{figure}[htbp]
\centering
\includegraphics[width=5in]{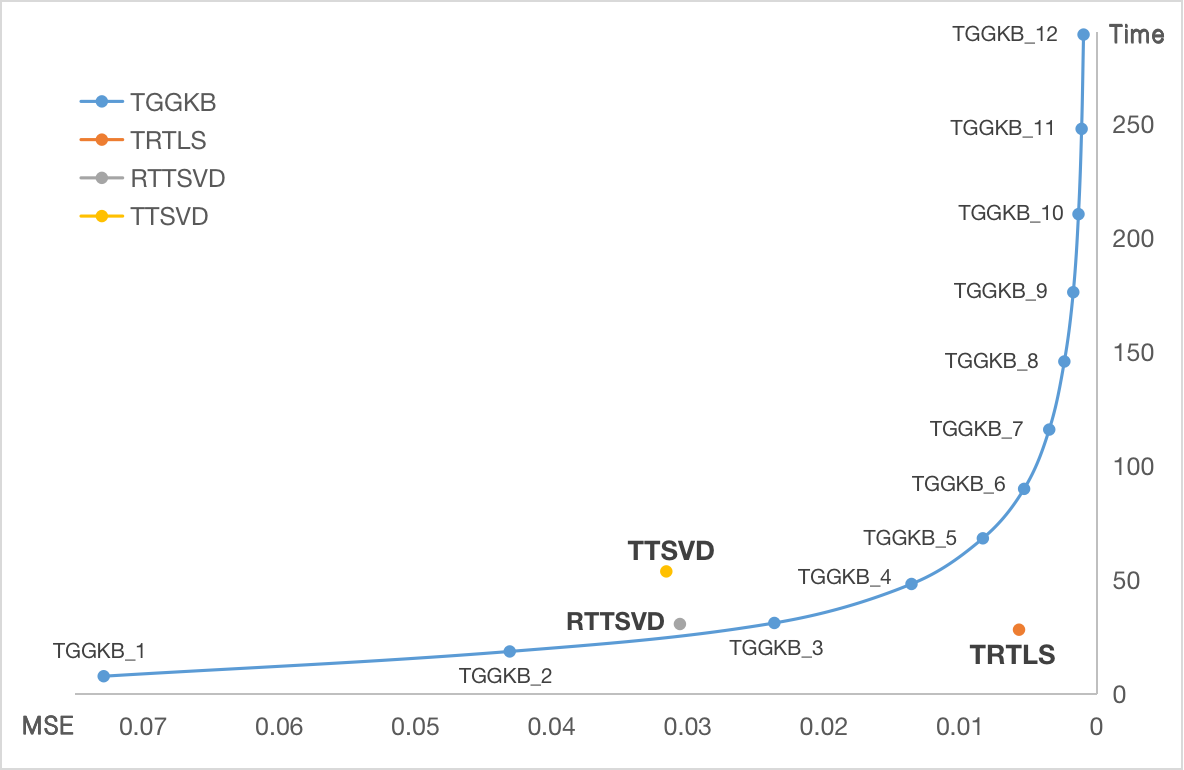}
\caption{Comparisons among TGGKB, RTTSVD, TTSVD and TR-TLS}
\end{figure}

\begin{figure}[htbp]
\centering
\includegraphics[width=5in]{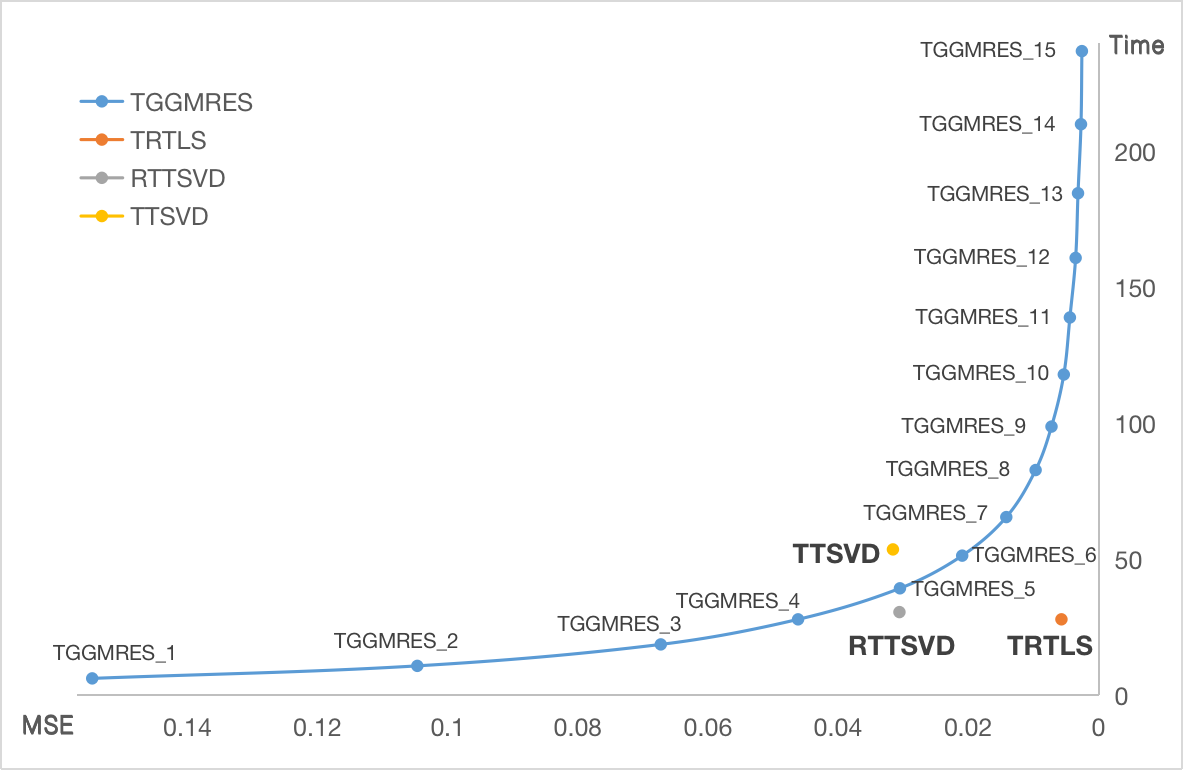}
\caption{Comparisons among TGGMRES, RTTSVD, TTSVD and TR-TLS}
\end{figure}

\section{Conclusions and Future Researches}

In this paper, the regularized TLS based on the tensor T-product is established. We extend the related theorems and properties of classical RTLS in the matrix form to the tensor form. Based on these theorems, numerical algorithms for the TR-TLS problem are proposed. In addition, we explore the applications of the TR-TLS method in the field of image and video deblurring. Through numerical experiments, the TR-TLS is proved to have obvious advantages over solving such ill-conditioned problems.

For further improving the iterative algorithms mentioned in this paper, we may consider both algorithmic adjustments and stability analysis. We would like to focus on a rigorous convergence theory. Another topic of investigation is to conduct perturbation analysis for the TR-TLS problem. Since the TLS is also known as the errors-in-variables model in the statistical literature, we expect that  new algorithms will show potential advantages for parameter estimation. Also, it is natural to consider a generalization of nonlinear TLS.

In addition to these mentioned above, future attention will also be paid to deeper mining of more randomized and preserving-structure algorithms, and wider applications of the TR-TLS in the real world.

\newpage

{\small
\bibliographystyle{siam}
\bibliography{document}
}

\end{document}